\def\P{{\cal P}}
\def\B{{\cal B}}
\def\R^n{{\cal U}}
\def\R^m{{\cal V}}
\def\R{{\mathbb R}}
\def\C{{\cal C}}
\newcommand{\unorm}[1]{\lVert#1\rVert}
\newcommand{\uabs}[1]{\lvert#1\rvert}
\newcommand{\norm}[1]{\left\lVert#1\right\rVert}
\newcommand{\abs}[1]{\left\lvert#1\right\rvert}
\newcommand{\topnew}{\top \hspace{-0.05cm}}
\newcommand\numberthis{\addtocounter{equation}{1}\tag{\theequation}}
\renewcommand{\arraystretch}{1.5}
\algnewcommand{\IfThenElse}[3]{
  \State \algorithmicif\ #1\ \algorithmicthen\ #2\ \algorithmicelse\ #3}
\DeclareMathOperator*{\argmin}{argmin}
\DeclareMathOperator*{\vect}{vec}
\DeclareMathOperator*{\rank}{rank}
\DeclareMathOperator*{\diag}{diag}
\DeclareMathOperator*{\sing}{singleton}
\newtheorem{theorem}{Theorem}
\newtheorem{proposition}{Proposition}
\newtheorem{lemma}{Lemma}
\newtheorem{example}{Example}
\newtheorem{remark}{Remark}
\newtheorem{corollary}{Corollary}
\newtheorem{definition}{Definition}
\begin{document}
%
\title{On Asymptotic Linear Convergence of Projected Gradient Descent for Constrained Least Squares}
%
%
%

\author{Trung~Vu,~\IEEEmembership{Graduate~Student~Member,~IEEE},
    Raviv~Raich,~\IEEEmembership{Senior~Member,~IEEE}
\thanks{Trung Vu and Raviv Raich are with the School of Electrical Engineering and Computer Science, Oregon State University, Corvallis, OR 97331, USA. (e-mails: vutru@oregonstate.edu and raich@oregonstate.edu).}}

%
%

\markboth{Preprint}%
{Vu \MakeLowercase{\textit{et al.}}: On Local Linear Convergence of Projected Gradient Descent for Constrained Least Squares}
%



\maketitle

\begin{abstract}
Many recent problems in signal processing and machine learning such as compressed sensing, image restoration, matrix/tensor recovery, and non-negative matrix factorization can be cast as constrained optimization. Projected gradient descent is a simple yet efficient method for solving such constrained optimization problems. Local convergence analysis furthers our understanding of its asymptotic behavior near the solution, offering sharper bounds on the convergence rate compared to global convergence analysis. However, local guarantees often appear scattered in problem-specific areas of machine learning and signal processing. This manuscript presents a unified framework for the local convergence analysis of projected gradient descent in the context of constrained least squares. The proposed analysis offers insights into pivotal local convergence properties such as the conditions for linear convergence, the region of convergence, the exact asymptotic rate of convergence, and the bound on the number of iterations needed to reach a certain level of accuracy. To demonstrate the applicability of the proposed approach, we present a recipe for the convergence analysis of projected gradient descent and demonstrate it via a beginning-to-end application of the recipe on four fundamental problems, namely, linear equality-constrained least squares, sparse recovery, least squares with the unit norm constraint, and matrix completion.
\end{abstract}

\begin{IEEEkeywords}
Projected gradient descent, constrained least squares, local linear convergence, asymptotic convergence rate.
\end{IEEEkeywords}

%
\IEEEpeerreviewmaketitle

\section{Introduction}

\IEEEPARstart{C}{onstrained} least squares can be formulated as the following optimization problem: 
\begin{align} \label{prob:f}
\min_{\bm x \in \R^n} \frac{1}{2} \unorm{\bm A \bm x - \bm b}^2 \quad \text{s.t. } \bm x \in \C ,
\end{align}
where $\C \in \R^n$ is a non-empty closed set, $\bm A \in \R^{m \times n}$, and $\bm b \in \R^m$ is the observation from which we wish to recover the solution $\bm x^*$ efficiently.  
With the surge in the amount of data over the past decades, modern learning problems have become increasingly complex and optimization in the presence of constraints is frequently used to capture accurately their inherent structure. 
Examples in the area of machine learning and signal processing include, but are not limited to, compressed sensing \cite{figueiredo2007gradient,blumensath2008iterative,blumensath2009iterative}, image restoration \cite{hunt1973application,galatsanos1991least,mesarovic1995regularized}, seismic inversion \cite{puryear2012constrained,chen2015seismic,menke2018geophysical}, and phase-only beamforming \cite{tranter2017fast,zhang2021fast}. Since the set of real $n_1 \times n_2$ matrices is isomorphic to $\R^{n_1 n_2}$, application of (\ref{prob:f}) is also found in problems such as low-rank matrix recovery \cite{jain2010guaranteed,chen2015fast,khanna2018iht} and non-negative matrix factorization \cite{lin2007projected,mohammadiha2009nonnegative,guan2012nenmf}.

Projected gradient descent (PGD) is one of the most popular methods for solving constrained optimization, thanks to its simplicity and efficiency.
In theory, convergence properties of this method are natural extensions of the classical results for unconstrained optimization \cite{luenberger1984linear,bertsekas1997nonlinear,beck2017first,jain2017non}. 
When the constraint set $\C$ is convex, PGD is also known as the projected Landweber iteration \cite{combettes2011proximal} and is shown to converge sublinearly to the global solution of (\ref{prob:f}). Moreover, when the least-squares objective is strongly convex, the algorithm enjoys fast linear convergence. For non-convex settings, with the recent introduction of restricted (strong) convexity, global convergence has been guaranteed for certain structural constraints such as sparsity constraint \cite{candes2005decoding}, low-rank constraint \cite{sun2016guaranteed}, and L2-norm constraint \cite{beck2018globally}.

From a different perspective, problem (\ref{prob:f}) can be viewed as a manifold optimization problem in which the intrinsic structure of manifolds can be exploited. Dating back to the 1970s, Luenberger \cite{luenberger1972gradient} studied a variant of gradient projection method using the concept of geodesic descent. 
Under the assumption that $\C$ is a differentiable manifold in Euclidean space, the author provided sufficient conditions for global convergence and established a sharp bound on the asymptotic convergence rate near a strict local minimum. Later on, this result was extended to a broader class of Riemannian manifolds and has been widely known as the Riemannian steepest descent method\cite{luenberger1972gradient,lichnewsky1979minimisation,gabay1982minimizing,uschmajew2020geometric}.
The asymptotic convergence rate of Riemannian steepest descent (with exact line search) is given by the Kantorovich ratio $(\beta-\alpha)^2/(\beta+\alpha)^2$, where $\alpha$ and $\beta$ are the smallest and largest eigenvalues of the second derivative of the Lagrangian restricted to the subspace tangent to the constraint manifold at the solution. 
Remarkably, such local convergence bounds are tighter than those obtained from the aforementioned global convergence analysis in the optimization literature since the former exploits the local structure of the problem. 
The global convergence bounds, on the other hand, take into account the worst-case behavior of the algorithm that might occur far away from the solution of interest.
In certain situations, global convergence analysis suggests sublinear convergence while local convergence analysis offers linear convergence thanks to the benign structure near the solution \cite{o2015adaptive}. 
One key element in the asymptotic convergence analysis of Riemannian steepest descent is Kantorovich inequality \cite{strang1960kantorovich}.
However, this technique depends on the optimal choice of step size in the exact line search scheme and is not straightforwardly generalized to other variants of gradient projection.  To the best of our knowledge, there has been no direct extension of the analysis for Riemannian steepest descent method to plain PGD with a fixed step size. 

\textbf{Our Contribution.} In this paper, we develop a unified framework for a local convergence analysis of the PGD algorithm. 
We leverage our earlier preliminary work, in which we developed a convergence rate only analysis for the specific problems of low-rank matrix completion \cite{chunikhina2014performance} and minimization of a quadratic with spherical constraints \cite{vu2019convergence}. For the former, we developed two acceleration approaches that leverage on the rate analysis \cite{vu2019local,vu2019accelerating}. The key approach used in these works is to represent each algorithm as a fixed point iteration and to approximate the fixed point operator as locally linear. This idea extends to other algorithms (i.e., non PGD) that can be represented using a fixed point iteration (e.g., see our work on analyzing GD for symmetric matrix completion \cite{vu2021exact}). For each problem, problem-specific properties have been utilized to facilitate the analysis. Here, our goal is to develop a {\em unified} framework for convergence rate analysis of PGD for constrained least-squares.
Our framework relies on three key steps: {\em(i)} the introduction of Lipschitz-continuous differentiability to provide tight error bounds on the linear approximation of the projection operator near the solution, {\em(ii)}
the establishment of an asymptotically-linear recursion on the error iterations, and {\em(iii)} the derivation of the linear rate and the region of convergence (ROC) of the error sequence by leveraging our work on the convergence of nonlinear difference equations \cite{vu2021closed}. Our approach shifts the burden of the analysis to the characterization of the projection operator (for an example of such characterization of the projection onto the rank-$r$ manifold, see \cite{vu2021perturbation}-Theorem~1).
In the context of PGD for the general constrained least squares, the proposed framework is the first to offer a closed-form expression of the exact asymptotic rate of local linear convergence, the ROC, and a bound on the number of iterations needed to reach a certain level of accuracy.\footnote{We note that the classic work of Polyak \cite{polyak1964some} can be considered as a replacement for our analysis in the third step. While such result is more general in the context of nonlinear different equations, we do not find a straightforward extension to obtain the ROC and the guarantees on the number of required iterations in our context of convergence analysis.}
To illustrate the utility of the approach, we apply our framework to four well-known problems in machine learning and signal processing, namely, linear equality-constrained least squares, sparse recovery, least squares with spherical constraint, and matrix completion.
We show that the obtained asymptotic rate of convergence matches existing results in the literature. For problems in which the exact convergence rate of PGD has not been studied, we verify the asymptotic rate obtained by our analysis against the rate of convergence obtained in numerical experiments. 
We believe that this framework can be used as a general recipe to develop quick yet sharp local convergence results for PGD in other applications in the field as well as to complement conservative analysis of global convergence.

\textbf{Organization.} The rest of this paper is organized as follows. Section~\ref{sec:prel} provides a brief background of PGD for constrained least squares, including properties of the orthogonal projection, stationary points of the problem, and the PGD algorithm along with its fixed points. Next, we present our unified framework for the local convergence analysis of PGD in Section~\ref{sec:main}, followed by the proof of the main theorem.
Then, Section~\ref{sec:apps} demonstrates the application of the proposed recipe to four well-known problems in machine learning and signal processing. Finally, we summarize our results and discuss some of the possible extensions in Section~\ref{sec:conclusion}.

\section{Preliminaries}
\label{sec:prel}

This section presents key concepts and background results that will be used as the basic premise of our subsequent convergence analysis.

\subsection{Notations}
Throughout the paper, we use the notation $\unorm{\cdot}$ to denote the Euclidean norm for vectors. For matrices, $\unorm{\cdot}_F$ and $\unorm{\cdot}_2$ denote the Frobenius norm and the spectral norm, respectively. Boldfaced symbols are reserved for vectors and matrices. 
Additionally, the $t \times t$ identity matrix is denoted by $\bm I_t$ and the $i$th vector in the natural basis of $\R^n$ is denoted by $\bm e_i$. We use $\otimes$ to denote the Kronecker product between two matrices. 
The vectorization of a matrix $\bm X \in \R^{m \times n}$, denoted by $\vect(\bm X)$, is the concatenation of the columns of a matrix one on top of another in their original order, i.e., for $\bm X=[{\bm x}_1,\ldots,{\bm x}_n] $, $\vect(\bm X)=[{\bm x}_1^\topnew,\ldots,{\bm x}_n^\topnew]^\topnew$. 
Given a vector $\bm x \in \R^n$, $\diag(\bm x)$ denotes the a square diagonal matrix such that $[\diag(\bm x)]_{ii}=x_i$.
For a scalar $r>0$, denote the open ball of center $\bm x$ and radius $r$ by $\B(\bm x,r) = \{ \bm y \mid \unorm{\bm y - \bm x} < r \}$. 
Correspondingly, the closed ball of center $\bm x$ and radius $r$ is denoted by $\B[\bm x,r] = \{ \bm y \mid \unorm{\bm y - \bm x} \leq r \}$.
The lexicographical order between two vectors $\bm x$ and $\bm y$ of the same length is defined by $\bm x < \bm y$ if $x_i < y_i$ for the first $i$ ($i$ goes from $1$) where $x_i$ and $y_i$ differ. The lexicographical order between two matrices $\bm X$ and $\bm Y$ of the same size is define by the lexicographical order between $\vect(\bm X)$ and $\vect(\bm Y)$. 

Given $\bm A \in \R^{m \times n}$, the $i$th largest eigenvalue and the $i$th largest singular value of $\bm A$ are denoted by $\lambda_i(\bm A)$ and $\sigma_i(\bm A)$, respectively.
The spectral radius of $\bm A$ is defined as $\rho(\bm A) = \max_{i} \uabs{\lambda_i(\bm A)}$ and is less than or equal to the spectral norm, i.e., $\rho(\bm A) \leq \unorm{\bm A}_2$. 
Gelfand's formula \cite{gelfand1941normierte} states that $\rho(\bm A) = \lim_{k \to \infty} \unorm{\bm A^k}_2^{1/k}$. 
If $\bm A$ is square and invertible, the condition number of $\bm A$ is defined as $\kappa(\bm A) = \sigma_1(\bm A)/\sigma_n(\bm A)$.

\subsection{Nonlinear Orthogonal Projections}

Given a non-empty set $\C \subset \R^n$, let us define the distance from a point $\bm x \in \R^n$ to $\C$ as
\begin{align}
    d(\bm x, \C) = \inf_{\bm y \in \C} \{ \unorm{\bm y - \bm x} \} .
\end{align}
The set of all projections of $\bm x$ onto $\C$ is defined by
\begin{align} \label{equ:PC}
    \Pi_\C (\bm x) = \{ \bm y \in \C \mid \unorm{\bm y - \bm x} = d(\bm x, \C) \} .
\end{align}
It is well-known \cite{vasilyev2013depth} that if $\C$ is closed, then for any $\bm x \in \R^n$, $\Pi_\C (\bm x)$ is non-empty\footnote{In addition, if $\C$ is convex, then $\Pi_\C (\bm x)$ is singleton.}.
An orthogonal projection onto $\C$ is defined as $\P_\C: \R^n \to \C$ such that $\P_\C (\bm x)$ is chosen as an element of $\Pi_\C (\bm x)$ based on a prescribed scheme (e.g., based on lexicographic order).
There exists a non-empty subset of $\R^n$ such that $\Pi_\C$ is uniquely defined, given by
\begin{align} \label{equ:uni_Pc}
    \sing \Pi_\C = \{ \bm x \in \R^n \mid \Pi_\C (\bm x) \text{ is singleton} \} .
\end{align}
We can now consider the differentiability of $\P_\C$ over $\sing \Pi_\C$ as follows.
\begin{definition}[Point-wise differentiability] 
\label{def:Pc_diff}
The projection $\P_\C$ is said to be \textbf{differentiable} at $\bm x \in \sing \Pi_\C$ if there exists $\nabla \P_\C (\bm x) \in \R^{n \times n}$ such that
\begin{align*}
    \limsup_{\bm \delta \to \bm 0} \sup_{\bm y \in \Pi_\C (\bm x + \bm \delta) } \frac{\unorm{\bm y - \P_\C(\bm x) - \nabla \P_\C (\bm x) \bm \delta}}{\unorm{\bm \delta}} = 0 .
\end{align*}
The operator $\nabla \P_\C (\bm x)$ is said to be the derivative of $\P_\C$ at $\bm x$.
\end{definition}
\begin{definition}[Point-wise Lipschitz-continuous differentiability]
\label{def:Pc_diff2}
The projection $\P_\C$ is said to be Lipschitz-continuously differentiable at $\bm x \in \sing \Pi_\C$ if $\P_\C$ is differentiable at $\bm x$ and there exist $0 < c_1(\bm x) \leq \infty$ and $0 \leq c_2(\bm x) < \infty$ such that for any $\bm \delta \in \B(\bm 0,c_1(\bm x))$, we have
\begin{align} \label{equ:sup_Pc}
    \sup_{\bm y \in \Pi_\C (\bm x + \bm \delta) } \unorm{\bm y - \P_\C(\bm x) - \nabla \P_\C (\bm x) \bm \delta} \leq c_2(\bm x) \unorm{\bm \delta}^2 .
\end{align}
\end{definition}
\noindent It is noted that the supremum in (4) implies 
\begin{align*}
    \unorm{\P_\C(\bm x + \bm \delta) - \P_\C(\bm x) - \nabla \P_\C (\bm x) \bm \delta} \leq c_2(\bm x) \unorm{\bm \delta}^2
\end{align*}
holds for any choice of $\P_\C(\bm x + \bm \delta)$ in $\Pi_{\cal C}({\bm x}+{\bm \delta})$.
Note that while $\P_\C(\bm x)$ is uniquely defined for $\bm x \in \sing \Pi_\C$, $\P_\C(\bm x + \bm \delta)$ is not since $\bm x + \bm \delta$ may not be in $\sing \Pi_\C$.
\begin{example} \label{eg:c2_Ps}
Let $\C = \{ \bm x \in \R^n \mid \unorm{\bm x}=1 \}$ be the unit sphere of dimension $n-1$. For any $\bm x \neq \bm 0$, the projection onto $\C$ is uniquely given by $\P_\C (\bm x) = \bm x / \unorm{\bm x}$.
For $\bm x = \bm 0$, we have $\Pi_{\C}(\bm 0) = \C$ and $\P_\C(\bm 0)$ can be chosen as any point on the unit sphere.
In Section~II of the Supplementary Material, we prove that $\P_\C$ is Lipschitz-continuously differentiable at any $\bm x \in \sing \Pi_\C = \R^n \setminus \{\bm 0\}$. In particular, for any $\bm x \neq \bm 0$ and $\bm \delta \in \R^n$, we have
\begin{align*} 
    \sup_{\bm y \in \Pi_{\C}(\bm x + \bm \delta)} \norm{\bm y - \frac{\bm x}{\unorm{\bm x}} - \Bigl( \bm I_n - \frac{\bm x \bm x^{\topnew}}{\unorm{\bm x}^2} \Bigr) \frac{\bm \delta}{\unorm{\bm x}}} \leq \frac{2}{\unorm{\bm x}^2} \unorm{\bm \delta}^2 . \numberthis \label{equ:sphere_off}
\end{align*}
For $\bm \delta \neq - \bm x$, $\Pi_\C(\bm x + \bm \delta) = \{ (\bm x + \bm \delta) / \norm{\bm x + \bm \delta} \}$ is singleton and the supremum is evaluated at only one point $\bm y = (\bm x + \bm \delta) / \norm{\bm x + \bm \delta}$.
For $\bm \delta = - \bm x$, $\Pi_\C(\bm x + \bm \delta) = \Pi_\C(\bm 0) = \C$ is not singleton and the supremum is taken over the entire sphere independent of $\bm x$.
In either case regardless the value of $\bm \delta$, comparing (\ref{equ:sphere_off}) with (\ref{equ:sup_Pc}), we recognize the projection onto the unit sphere is Lipschitz-continuously differentiable at $\bm x \in \sing \Pi_\C$ with
\begin{align*}
    &\nabla \P_\C(\bm x) = \frac{1}{\unorm{\bm x}} \Bigl( \bm I_n - \frac{\bm x \bm x^{\topnew}}{\unorm{\bm x}^2} \Bigr) , \\
    &c_1(\bm x)=\infty , \quad c_2(\bm x)=\frac{2}{\unorm{\bm x}^2} .
\end{align*}
\end{example}
In 1984, Foote \cite{foote1984regularity} showed that if $\C$ is a $C^k$ ($k \geq 2$) submanifold of $\R^n$, then $\C$ has a neighborhood $\mathcal{E}$ such that $\mathcal{E} \subseteq \sing \Pi_\C$ and the projection $\P_\C$ restricted to $\mathcal{E}$ is a $C^{k-1}$ mapping.
Later on, Dudek and Holly \cite{dudek1994nonlinear} proved the derivative $\nabla \P_\C$ is a linear map to the tangent bundle of $\C$ and more importantly, for any $\bm x^* \in \C$, $\nabla \P_\C (\bm x^*)$ is the (linear) orthogonal projection onto the tangent space to $\C$ at $\bm x^*$.
Recently, a local version of this result has been proposed by Lewis and Malick \cite{lewis2008alternating}:

\begin{proposition} \label{prop:pc} (Rephrased from Lemma~4 in \cite{lewis2008alternating})
Assume $\C$ is a $C^k$ ($k \geq 2$) manifold around a point $\bm x^* \in \C$. Denote the tangent space to $\C$ at $\bm x^*$ by $T_{\bm x^*}(\C)$. Then, the set of projections $\Pi_\C$ is (locally) singleton around $\bm x^*$. Moreover, $\P_\C$ is a $C^{k-1}$ mapping around $\bm x^*$ and
\begin{align} \label{equ:dudek0}
    \nabla \P_\C (\bm x^*) = \P_{T_{\bm x^*}(\C)} ,
\end{align}
where $\P_{T_{\bm x^*}(\C)}$ is the orthogonal projection onto $T_{\bm x^*}(\C)$.
\end{proposition}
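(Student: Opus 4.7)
The plan is to reduce the statement to a smooth finite-dimensional optimization problem that depends on the parameter $\bm x$, and then invoke the implicit function theorem. Since $\C$ is a $C^k$ manifold of some dimension $d$ around $\bm x^*$, there exists a $C^k$ embedding $\phi: U \to \R^n$ on an open set $U \subset \R^d$ with $\phi(\bm u^*) = \bm x^*$, such that $\phi(U)$ is a relatively open neighborhood of $\bm x^*$ in $\C$. The columns of the Jacobian $\bm J = \nabla \phi(\bm u^*) \in \R^{n \times d}$ are linearly independent and span the tangent space $T_{\bm x^*}(\C)$.

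First I would show that for $\bm x$ close enough to $\bm x^*$, every projection is realized inside the chart. By continuity of $\bm x \mapsto d(\bm x, \C)$ and the fact that $\bm x^* \in \C$, any element of $\Pi_\C(\bm x)$ must be close to $\bm x^*$ whenever $\bm x$ is close to $\bm x^*$; hence every projection eventually lies in $\phi(U)$. On such a neighborhood one can write $\P_\C(\bm x) = \phi(\bm u(\bm x))$, where $\bm u(\bm x)$ minimizes $g(\bm u, \bm x) = \tfrac{1}{2}\unorm{\phi(\bm u) - \bm x}^2$ over $U$.

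Next I would apply the implicit function theorem to the first-order condition $F(\bm u, \bm x) = \nabla \phi(\bm u)^{\topnew} (\phi(\bm u) - \bm x) = \bm 0$ at $(\bm u^*, \bm x^*)$. The partial Jacobian $\partial F/\partial \bm u$ at this point collapses to $\bm J^{\topnew} \bm J$, because the second-derivative contribution of $\phi$ is multiplied by the residual $\phi(\bm u^*) - \bm x^* = \bm 0$, and $\bm J^{\topnew} \bm J$ is invertible since $\bm J$ has full column rank. Therefore $\bm u(\cdot)$ is uniquely determined and of class $C^{k-1}$ near $\bm x^*$, so $\P_\C = \phi \circ \bm u$ is a $C^{k-1}$ mapping. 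Differentiating the identity $F(\bm u(\bm x), \bm x) = \bm 0$ at $\bm x = \bm x^*$ yields $\bm J^{\topnew} \bm J \cdot \bm u'(\bm x^*) = \bm J^{\topnew}$, hence $\bm u'(\bm x^*) = (\bm J^{\topnew} \bm J)^{-1} \bm J^{\topnew}$. The chain rule then gives $\nabla \P_\C(\bm x^*) = \bm J (\bm J^{\topnew} \bm J)^{-1} \bm J^{\topnew}$, which is precisely the orthogonal projector onto $\im(\bm J) = T_{\bm x^*}(\C)$.

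The hard part will be the localization step that guarantees $\P_\C(\bm x)$ stays inside the chart. Distant pieces of $\C$ could in principle compete for the role of nearest point; ruling this out cleanly requires a continuity/compactness argument using that $\C \setminus \phi(U)$, although closed, is bounded away from $\bm x^*$ by a positive amount relative to the distance achieved within $\phi(U)$. Once this localization is established, the implicit function theorem and chain-rule computations are routine.
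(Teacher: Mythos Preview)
The paper does not supply its own proof of this proposition: it is quoted as a rephrasing of Lemma~4 in Lewis and Malick~\cite{lewis2008alternating} (with related earlier results attributed to Foote~\cite{foote1984regularity} and Dudek--Holly~\cite{dudek1994nonlinear}) and is used purely as background. There is therefore nothing in the paper to compare your argument against.

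That said, your proposal is a correct and standard way to establish the result. Parametrizing $\C$ locally by a $C^k$ embedding $\phi$, reducing the projection to the finite-dimensional minimization of $g(\bm u,\bm x)=\tfrac12\unorm{\phi(\bm u)-\bm x}^2$, and applying the implicit function theorem to the critical-point equation $\nabla\phi(\bm u)^{\topnew}(\phi(\bm u)-\bm x)=\bm 0$ is exactly the approach taken in the references above. Your observation that the second-derivative term of $\phi$ drops out at $(\bm u^*,\bm x^*)$ because the residual vanishes is the key simplification, and the chain-rule computation $\nabla\P_\C(\bm x^*)=\bm J(\bm J^{\topnew}\bm J)^{-1}\bm J^{\topnew}=\P_{T_{\bm x^*}(\C)}$ is correct. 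The localization step you flag is genuine but routine: since $d(\bm x^*,\C\setminus\phi(U))>0$ while $d(\bm x^*,\C)=0$, a small enough ball around $\bm x^*$ forces every nearest point into $\phi(U)$; this also yields uniqueness once the implicit function theorem gives a unique critical point in the chart.
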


\noindent Further works on the uniqueness and regularity of $\P_\C$ can also be found in \cite{ambrosio1998curvature,absil2012projection,rataj2019curvature,leobacher2021existence}.
We note that the assumption $\C$ is a $C^2$ manifold around $\bm x^*$ requires the existence of a neighborhood of $\bm x^*$ in which $\P_\C$ is uniformly differentiable. 
Our result in this manuscript, while strongly motivated by the aforementioned results, only requires $\C$ to be differentiable at two points (see Theorem~\ref{theo:pgd}).

\begin{algorithm}[t]
\caption{Projected Gradient Descent (PGD)}
\label{algo:PGD}
\textbf{Input}: $f$, $\C$, $\eta$, $\bm x^{(0)}$ \\
\textbf{Output}: $\{\bm x^{(k)}\}_{k=0}^\infty$
\begin{algorithmic}[1]
\For{$k=0,1,\ldots$}
\State $\bm z^{(k)}_\eta = \bm x^{(k)} - \eta \bm A^\topnew\bigl( \bm A \bm x^{(k)} - \bm b \bigr)$
\State $\bm x^{(k+1)} = \P_{\C} \bigl( \bm z^{(k)}_\eta \bigr)$
\EndFor
\end{algorithmic}
\end{algorithm}

\subsection{Stationary Points of (\ref{prob:f})}

We defined the (Lipschitz-continuous) differentiability of the projection $\P_\C$ at a point in $\C$.
We are now in position to define stationary points of (\ref{prob:f}) as those where the gradient of the objective function on the constraint set vanishes \cite{absil2009optimization}:
\begin{definition}\label{def:stationary}
$\bm x^* \in \C$ is a \textbf{stationary point} of (\ref{prob:f}) if $\P_\C$ is differentiable at $\bm x^*$ and
\begin{align} \label{equ:stationary}
    \nabla \P_\C (\bm x^*) \bm A^\topnew\bigl( \bm A \bm x^* - \bm b \bigr) = \bm 0 .
\end{align}
Assume in addition that $\P_\C$ is Lipschitz-continuously differentiable at $\bm x^*$ with constants $c_1(\bm x^*)$ and $c_2(\bm x^*)$. Then $\bm x^*$ is called a \textbf{Lipschitz stationary point} of (\ref{prob:f}) with constants $c_1(\bm x^*)$ and $c_2(\bm x^*)$.
\end{definition}
\noindent Similar to unconstrained optimization, stationary points in Definition~\ref{def:stationary} can be local minimizers, local maximizers, or saddle points of the constrained problem (\ref{prob:f}).

\subsection{Projected Gradient Descent}

Algorithm~\ref{algo:PGD} describes the projected gradient descent algorithm for solving (\ref{prob:f}). Starting at some $\bm x^{(0)} \in \C$, the algorithm iteratively updates the current value by {\em(i)} taking a step in the opposite direction of the gradient and {\em(ii)} projecting the result back onto $\C$, i.e.,
\begin{align} \label{equ:PGD}
    \bm x^{(k+1)} = \P_{\C} \Bigl( \bm x^{(k)} - \eta \bm A^\topnew\bigl( \bm A \bm x^{(k)} - \bm b \bigr) \Bigr) ,
\end{align}
where $\eta>0$ is a fixed step size.

\begin{definition}
$\bm x^*$ is a \textbf{fixed point} of Algorithm~\ref{algo:PGD} with step size $\eta>0$ if 
\begin{align} \label{equ:fixed_point}
    \bm x^* = \P_{\C} (\bm x^* - \eta \bm A^\topnew ( \bm A \bm x^* - \bm b )) .
\end{align}
\end{definition}

\begin{lemma} \label{lem:fixed}
If $\bm x^*$ is a fixed point of Algorithm~\ref{algo:PGD} with some step size $\eta>0$ and $\P_\C$ is differentiable at $\bm x^*$, then $\bm x^*$ is a stationary point of (\ref{prob:f}).
\end{lemma}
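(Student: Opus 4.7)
The plan is to exploit the fixed-point equation together with the differentiability of $\P_\C$ at $\bm x^*$. Writing $\bm g = \bm A^\topnew(\bm A \bm x^* - \bm b)$, the fixed-point condition $\P_\C(\bm x^* - \eta \bm g) = \bm x^*$ says that $\bm x^*$ is a closest point in $\C$ to $\bm x^* - \eta \bm g$. The target identity $\nabla \P_\C(\bm x^*) \bm g = \bm 0$ is an infinitesimal statement, whereas the fixed-point equation holds only at the single finite displacement $\eta \bm g$, so the first step will be to propagate the fixed-point property to all arbitrarily small step sizes.

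The key intermediate claim is the following elementary projection fact: if $\bm y \in \Pi_\C(\bm z)$, then $\bm y \in \Pi_\C(\bm y + s(\bm z - \bm y))$ for every $s \in [0,1]$. This is proved by expanding $\unorm{\bm w - (\bm y + s(\bm z - \bm y))}^2$ for an arbitrary $\bm w \in \C$ and applying the cross-term bound $(\bm w - \bm y)^\topnew (\bm z - \bm y) \leq \unorm{\bm w - \bm y}^2/2$, which itself is just a rearrangement of $\unorm{\bm w - \bm z}^2 \geq \unorm{\bm y - \bm z}^2$. A short calculation shows the resulting lower bound exceeds $s^2 \unorm{\bm z - \bm y}^2 = \unorm{\bm y - (\bm y + s(\bm z - \bm y))}^2$, so $\bm y$ is indeed a closest point. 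Applied with $\bm z = \bm x^* - \eta \bm g$ and $\bm y = \bm x^*$, this yields $\bm x^* \in \Pi_\C(\bm x^* - t\bm g)$ for every $t \in (0, \eta]$.

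With this in hand, the conclusion falls out of Definition~\ref{def:Pc_diff}. Because $\bm x^* \in \C$ forces $\Pi_\C(\bm x^*) = \{\bm x^*\}$, we have $\bm x^* \in \dom \P_\C$ with $\P_\C(\bm x^*) = \bm x^*$, so the assumed differentiability yields
\begin{align*}
    \lim_{t \to 0^+} \sup_{\bm y \in \Pi_\C(\bm x^* - t \bm g)} \frac{\unorm{\bm y - \bm x^* + t \nabla \P_\C(\bm x^*) \bm g}}{t \unorm{\bm g}} = 0
\end{align*}
whenever $\bm g \neq \bm 0$ (the case $\bm g = \bm 0$ is trivial). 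Substituting the admissible choice $\bm y = \bm x^*$ into the supremand produces the $t$-independent value $\unorm{\nabla \P_\C(\bm x^*) \bm g}/\unorm{\bm g}$ as a lower bound for the supremum; since that supremum tends to $0$, this nonnegative constant must itself vanish, which is exactly (\ref{equ:stationary}).

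The main obstacle is the propagation lemma: without it, a single fixed-point equation at the prescribed step size $\eta$ cannot be converted into a differential statement at $\bm x^*$, and arguments that invoke differentiability along a generic direction $\bm \delta$ only recover $\nabla \P_\C(\bm x^*)^\topnew \bm g = \bm 0$, which is strictly weaker unless one also knows $\nabla \P_\C(\bm x^*)$ is symmetric. Once the propagation lemma provides the family $\bm x^* \in \Pi_\C(\bm x^* - t \bm g)$, the remaining limit argument is a one-liner.
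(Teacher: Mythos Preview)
Your proof is correct and follows essentially the same strategy as the paper: propagate the fixed-point property from step size $\eta$ down to all smaller step sizes, then invoke the differentiability of $\P_\C$ at $\bm x^*$ along the direction $-\bm g$. The only minor difference is that the paper establishes $\Pi_\C(\bm x^* - t\bm g) = \{\bm x^*\}$ (uniqueness) via a triangle-inequality argument with equality-case analysis, whereas you establish only $\bm x^* \in \Pi_\C(\bm x^* - t\bm g)$ via a squared-norm expansion---which is all that is actually needed, since membership already lower-bounds the supremum in Definition~\ref{def:Pc_diff}.
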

\noindent The proof of Lemma~\ref{lem:fixed} is given in Appendix~\ref{appdx:fixed}.

\section{Local Convergence Analysis}
\label{sec:main}

\begin{figure}[t]
    \centering
    \includegraphics[width=\columnwidth]{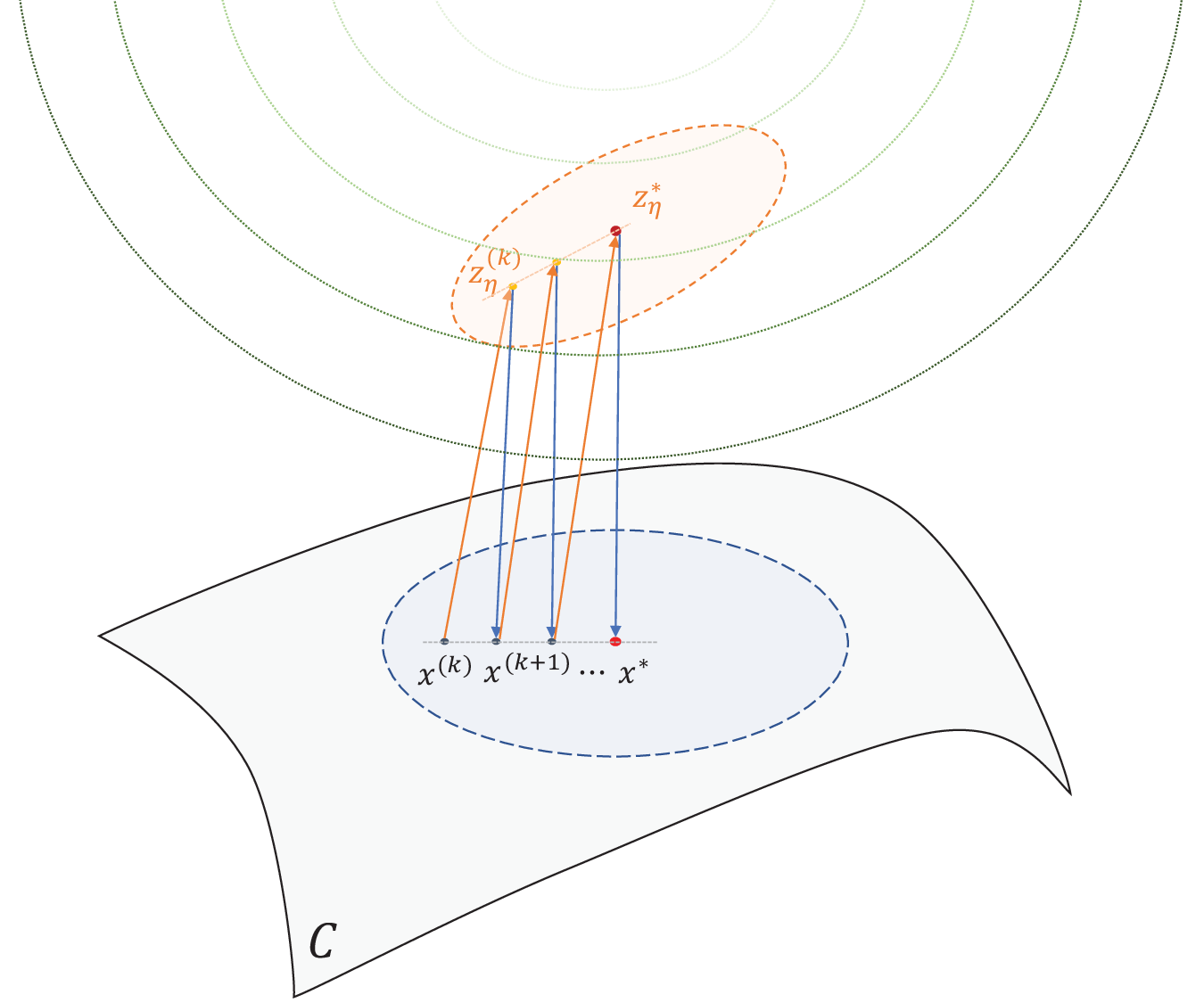}
    \caption{Illustration of convergence of projected gradient descent to a fixed point $\bm x^*$. In order to guarantee linear convergence, Theorem~\ref{theo:pgd} requires $\P_\C$ to be Lipschitz-continuously differentiable at both $\bm x^{(k)}$ and $\bm z_\eta^{(k)} = \bm x^{(k)} - \eta \bm A^\topnew (\bm A \bm x^{(k)} - \bm b)$. Moreover, the condition $\unorm{\bm x^{(0)} - \bm x^*} < \min \{ {c_1(\bm x^*)}/{\kappa(\bm Q)}, {c_1(\bm z^*_\eta)}/{(\kappa(\bm Q) u_\eta)} \}$ from (\ref{cond:roc}) ensures that $\bm x^{(k)}$ remains inside $\B(\bm x^*,c_1(\bm x^*))$ (blue dashed ellipse) and $\bm z_\eta^{(k)}=$ remains inside $\B(\bm z^*_\eta,c_1(\bm z^*_\eta))$ (orange dashed ellipse).}
    \label{fig:xz}
\end{figure}

In this section, we present the key contribution of this work, namely, a local convergence analysis of projected gradient descent for constrained least squares. Specifically, \textbf{our goal} is to establish the following results: {\em(i)} a closed-form expression of the exact asymptotic rate of convergence, {\em(ii)} a bound on the number of iterations needed to reach a certain level of accuracy, and {\em(iii)} a region of convergence.
Figure~\ref{fig:xz} illustrates the key idea in our analysis.
In order to establish the local linear convergence of Algorithm~\ref{algo:PGD} to its fixed point $\bm x^*$, we require the Lipschitz-continuous differentiability of $\P_\C$ at $\bm x^*$ and at $\bm z^*_\eta = \bm x^* - \eta \bm A^\topnew ( \bm A \bm x^*  - \bm b )$. These properties enables us to approximate each projected gradient descent update by a linear operator on the error vector (i.e., the difference between $\bm x^{(k)}$ and $\bm x^*$). 
Then, under the additional assumption that this linear operator is a contraction mapping and the initialization $\bm x^{(0)}$ is sufficiently close to $\bm x^*$, we show that the gradient step and the projection step remain inside the Lipschitz-continuous differentiability regions of $\bm x^*$ (i.e., $\B(\bm x^*,c_1(\bm x^*))$) and $\bm z^*_\eta$ (i.e., $\B(\bm z^*_\eta,c_1(\bm z^*_\eta))$, respectively).

\subsection{Main Results}

In this following, we state our main result in Theorem~\ref{theo:pgd}, followed by further insights into the convergence results. 

\begin{theorem} \label{theo:pgd}
Suppose $\bm x^*$ is a fixed point of Algorithm~\ref{algo:PGD} with step size $\eta>0$ such that the following conditions hold:
\begin{enumerate}[leftmargin=*]
    \item $\P_\C$ is Lipschitz-continuously differentiable at both the fixed point $\bm x^*$ and at the gradient step taken from the fixed point
    \begin{align} \label{equ:z}
        \bm z^*_\eta = \bm x^* - \eta \bm A^\topnew\bigl( \bm A \bm x^*  - \bm b \bigr) ,
    \end{align}
    with the corresponding matrices $\nabla \P_{\C} (\bm x^*)$, $\nabla \P_{\C} (\bm z^*_\eta)$, and constants $c_1(\bm x^*)$, $\bm c_2(\bm x^*)$, $c_1(\bm z^*_\eta)$, and $c_2(\bm z^*_\eta)$.
    
    \item The matrix
    \begin{align} \label{equ:H}
        \bm H = \nabla \P_{\C} (\bm z^*_\eta) (\bm I_n - \eta \bm A^\topnew \bm A) \nabla \P_{\C} (\bm x^*) 
    \end{align}
    admits an eigendecomposition $\bm H = \bm Q \bm \Lambda \bm Q^{-1}$, where $\bm Q \in \R^{n \times n}$ is an invertible matrix and $\bm \Lambda$ is a diagonal matrix whose diagonal entries are strictly less than $1$ in magnitude, i.e., $\rho(\bm H) = \unorm{\bm \Lambda}_2 < 1$.

    \item The initial iterate $\bm x^{(0)}$ satisfies
    \begin{align*}
        &\unorm{\bm x^{(0)} - \bm x^*} < \min \Bigl\{ \frac{c_1(\bm x^*)}{\kappa(\bm Q)}, \frac{c_1(\bm z^*_\eta)}{\kappa(\bm Q) u_\eta}, \frac{1-\rho(\bm H)}{q} \Bigr\} , \numberthis \label{cond:roc}
    \end{align*}
    where 
    \begin{align} \label{def:u_eta}
        u_\eta = \unorm{\bm I_n - \eta \bm A^\topnew \bm A}_2
    \end{align}
    and
    \begin{align} \label{def:q}
        q = \kappa^2(\bm Q) u_\eta \bigl( c_2(\bm z^*_\eta) u_\eta + \unorm{\nabla \P_{\C} (\bm z^*_\eta)}_2 c_2(\bm x^*) \bigr) .
    \end{align} 
\end{enumerate}
Let $\{\bm x^{(k)}\}_{k=0}^\infty$ be the vector sequence generated by the PGD update in (\ref{equ:PGD}).
Then, for any $0<\epsilon<1$, we have $\unorm{\bm x^{(k)} - \bm x^*} \leq \epsilon \unorm{\bm x^{(0)} - \bm x^*}$ for all
\begin{align} \label{equ:k}
    k \geq \frac{\log(1/\epsilon)+\log(\kappa(\bm Q))}{\log(1/\rho(\bm H))} + c_3 ,
\end{align}
where $c_3>0$, given explicitly in Lemma~\ref{lem:scalar}, is independent of $\epsilon$.
Algorithm~\ref{algo:PGD} is said to converge locally to $\bm x^*$ at an \textbf{asymptotic linear rate} $\rho(\bm H)$ with the \textbf{region of linear convergence} given by (\ref{cond:roc}).
\end{theorem}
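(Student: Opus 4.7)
My plan is to linearize the PGD update around $\bm x^*$ and reduce the analysis to a scalar quadratic recursion of the form $a_{k+1} \leq \rho(\bm H) a_k + q a_k^2$, to which Lemma~\ref{lem:scalar} can then be applied. Let $\bm \epsilon^{(k)} = \bm x^{(k)} - \bm x^*$ and $\bm z_\eta^{(k)} = \bm x^{(k)} - \eta \bm A^\topnew (\bm A \bm x^{(k)} - \bm b)$. Since $\bm x^{(k)} \in \C$, the point $\bm x^{(k)}$ itself belongs to $\Pi_\C(\bm x^* + \bm \epsilon^{(k)})$ whenever $\unorm{\bm \epsilon^{(k)}}<c_1(\bm x^*)$, so Lipschitz-continuous differentiability of $\P_\C$ at $\bm x^*$ yields $\unorm{\bm \epsilon^{(k)} - \nabla \P_\C(\bm x^*) \bm \epsilon^{(k)}} \leq c_2(\bm x^*) \unorm{\bm \epsilon^{(k)}}^2$, which effectively lets us replace $\bm \epsilon^{(k)}$ by $\nabla \P_\C(\bm x^*) \bm \epsilon^{(k)}$ up to a quadratic term. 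Combining this with the identity $\bm z_\eta^{(k)} - \bm z^*_\eta = (\bm I_n - \eta \bm A^\topnew \bm A) \bm \epsilon^{(k)}$ and Lipschitz differentiability of $\P_\C$ at $\bm z^*_\eta$ applied to $\bm x^{(k+1)} = \P_\C(\bm z_\eta^{(k)})$ should yield
\begin{align*}
    \unorm{\bm \epsilon^{(k+1)} - \bm H \bm \epsilon^{(k)}} \leq u_\eta \bigl( c_2(\bm z^*_\eta) u_\eta + \unorm{\nabla \P_\C(\bm z^*_\eta)}_2 c_2(\bm x^*) \bigr) \unorm{\bm \epsilon^{(k)}}^2 ,
\end{align*}
the linearization-plus-remainder formula underlying the theorem.

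The main obstacle is that $\rho(\bm H) < 1$ does not automatically produce contraction in the Euclidean norm because $\bm H$ is generally non-normal. I would circumvent this by passing to the $\bm Q$-weighted norm $\unorm{\bm v}_{\bm Q} = \unorm{\bm Q^{-1} \bm v}$, under which the induced operator norm of $\bm H$ equals $\unorm{\bm \Lambda}_2 = \rho(\bm H)$, and by translating between $\unorm{\cdot}$ and $\unorm{\cdot}_{\bm Q}$ via $\unorm{\bm v}_{\bm Q} \leq \unorm{\bm Q^{-1}}_2 \unorm{\bm v}$ and $\unorm{\bm v} \leq \unorm{\bm Q}_2 \unorm{\bm v}_{\bm Q}$. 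The two conversions applied to the quadratic remainder combine into a factor $\kappa^2(\bm Q)$, which reproduces exactly the constant $q$ in (\ref{def:q}), so the Euclidean recursion above lifts to
\begin{align*}
    \unorm{\bm \epsilon^{(k+1)}}_{\bm Q} \leq \rho(\bm H) \unorm{\bm \epsilon^{(k)}}_{\bm Q} + q \unorm{\bm \epsilon^{(k)}}_{\bm Q}^2 .
\end{align*}

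To close the argument, I would proceed by induction, using the three clauses of (\ref{cond:roc}) in order. The first clause, $\unorm{\bm \epsilon^{(0)}} < c_1(\bm x^*)/\kappa(\bm Q)$, combined with the weighted-norm decay, keeps $\bm x^{(k)} \in \B(\bm x^*, c_1(\bm x^*))$; the second clause, $\unorm{\bm \epsilon^{(0)}} < c_1(\bm z^*_\eta)/(\kappa(\bm Q) u_\eta)$, together with $\unorm{\bm z_\eta^{(k)} - \bm z^*_\eta} \leq u_\eta \unorm{\bm \epsilon^{(k)}}$, keeps $\bm z_\eta^{(k)} \in \B(\bm z^*_\eta, c_1(\bm z^*_\eta))$, so both Lipschitz estimates remain valid along the whole trajectory; and the third, $\unorm{\bm \epsilon^{(0)}} < (1-\rho(\bm H))/q$, guarantees $\rho(\bm H) + q \unorm{\bm \epsilon^{(k)}}_{\bm Q} < 1$ and hence monotone decay in the weighted norm. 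With the scalar inequality in hand, Lemma~\ref{lem:scalar} converts the asymptotically linear-plus-quadratic decay of $a_k := \unorm{\bm \epsilon^{(k)}}_{\bm Q}$ into the iteration-count bound (\ref{equ:k}), with the additive $c_3$ absorbing the initial transient while the quadratic term still matters. Translating the resulting bound $a_k / a_0 \leq \epsilon/\kappa(\bm Q)$ back to the Euclidean norm via the two inequalities above yields $\unorm{\bm \epsilon^{(k)}} \leq \epsilon \unorm{\bm \epsilon^{(0)}}$ as claimed.
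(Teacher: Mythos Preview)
Your approach is essentially identical to the paper's: the same linearization via Lipschitz differentiability at $\bm x^*$ and $\bm z^*_\eta$ (the paper's Lemma~\ref{lem:delta_H}), the same passage to the $\bm Q$-weighted norm $\tilde{\bm \delta}^{(k)}=\bm Q^{-1}\bm \epsilon^{(k)}$ (Lemma~\ref{lem:delta_tilde}), the same scalar quadratic bound (Lemma~\ref{lem:scalar}), and the same back-conversion to the Euclidean norm. One bookkeeping correction: converting the quadratic remainder to the weighted norm picks up one factor of $\unorm{\bm Q^{-1}}_2$ and two of $\unorm{\bm Q}_2$, so the coefficient in the weighted recursion is $q/\unorm{\bm Q^{-1}}_2$, not $q$; this missing $\unorm{\bm Q^{-1}}_2$ is precisely what lets the third clause of (\ref{cond:roc}) close the induction via $\unorm{\bm \epsilon^{(0)}}_{\bm Q}\le \unorm{\bm Q^{-1}}_2\,\unorm{\bm \epsilon^{(0)}}$.
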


\noindent Theorem~\ref{theo:pgd} states the sufficient conditions for asymptotic linear convergence of Algorithm~\ref{algo:PGD}. 
In addition, the theorem establishes the asymptotic rate as the spectral radius of the matrix $\bm H$ and bounds the number of iterations needed to reach $\epsilon$-accuracy.
The proof of Theorem~\ref{theo:pgd} is given in Subsection~\ref{subsec:proof1}.
It is noteworthy that in the RHS of (\ref{equ:k}), the first term corresponds to linear convergence in the asymptotic regime and the second term corresponds to nonlinear convergence behavior at the early stage. We will revisit this point when we introduce Lemma~\ref{lem:scalar}.

\begin{remark} \label{rmk:kappa}
When $\bm H$ is symmetric, its eigendecomposition exists and can be represented as
\begin{align*}
    \bm H = \bm Q \bm \Lambda \bm Q^\topnew ,
\end{align*}
where $\bm Q$ is an orthogonal matrix with $\kappa(\bm Q)=1$. 
\end{remark}
Next, we study a special case of Theorem~\ref{theo:pgd} in which
\begin{align} \label{equ:PZT}
    \nabla \P_{\C} (\bm z^*_\eta) = \nabla \P_\C (\bm x^*) = \P_{T_{\bm x^*}(\C)} = \bm U_{\bm x^*} \bm U_{\bm x^*}^\topnew ,
\end{align}
where $\bm U_{\bm x^*} \in \R^{n \times d}$ ($d \leq n$) is the matrix whose columns provide an orthonormal basis for the tangent space to $\C$ at $\bm x^*$.
A typical example in which (\ref{equ:PZT}) holds is when {\em(i)} $\C$ is a $C^2$ $d$-dimensional submanifold around $\bm x^*$; and {\em(ii)} $\bm z^*_\eta = \bm x^*$. The first condition {\em(i)} stems from Proposition~\ref{prop:pc} in order to guarantee $\nabla \P_\C (\bm x^*) = \P_{T_{\bm x^*}(\C)}$. The second condition {\em(ii)} is equivalent to $\bm A^\topnew (\bm A \bm x - \bm b) = \bm 0$, which means $\bm x^*$ is also a stationary point of the unconstrained problem. Conveniently, this coincidence eliminates the task of characterizing the projection $\P_\C$ and its derivative $\nabla \P_\C$ at a point outside $\C$, which can be a challenging task in many problems.

\begin{corollary} \label{cor:noiseless}
Consider the same setting as in Theorem~\ref{theo:pgd} with the additional assumption that (\ref{equ:PZT}) holds.
If $(\bm A \bm U_{\bm x^*})^\topnew \bm A \bm U_{\bm x^*}$ has full rank and
\begin{align} \label{equ:eta2}
    0 < \eta < \frac{2}{\unorm{\bm A \bm U_{\bm x^*}}_2^2} ,
\end{align}
then Algorithm~\ref{algo:PGD} with fixed step size $\eta$ converges locally to $\bm x^*$ at an asymptotic linear rate \begin{align} \label{equ:rho_noiseless}
    \rho(\bm H) = \max \{ \uabs{1-\eta \lambda_1} , \uabs{1-\eta \lambda_d} \} ,
\end{align} 
where $\lambda_1$ and $\lambda_d$ are the largest and smallest eigenvalues of $(\bm A \bm U_{\bm x^*})^\topnew \bm A \bm U_{\bm x^*}$, respectively.
The region of linear convergence is given by
\begin{align} \label{cond:roc_sym}
    \unorm{\bm x^{(0)} - \bm x^*} < \min \Bigl\{ c_1(\bm x^*), \frac{c_1(\bm z^*_\eta)}{u_\eta}, \frac{1-\rho(\bm H)}{u_\eta c_2(\bm x^*) + u_\eta^2 c_2(\bm z^*_\eta)} \Bigr\} ,
\end{align}
where $u_\eta$ is given by (\ref{def:u_eta}).
\end{corollary}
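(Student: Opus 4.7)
The plan is to specialize Theorem~\ref{theo:pgd} to the case (\ref{equ:PZT}), where the derivatives of $\P_{\C}$ at both $\bm x^*$ and $\bm z^*_\eta$ coincide with the orthogonal projection onto the tangent space, and then simplify each quantity appearing in the statement of Theorem~\ref{theo:pgd}. The first step is to substitute $\nabla \P_\C(\bm x^*) = \nabla \P_\C(\bm z^*_\eta) = \bm U_{\bm x^*} \bm U_{\bm x^*}^\topnew$ into the definition (\ref{equ:H}) of $\bm H$. Using $\bm U_{\bm x^*}^\topnew \bm U_{\bm x^*} = \bm I_d$, this gives
\begin{align*}
\bm H = \bm U_{\bm x^*} \bigl( \bm I_d - \eta (\bm A \bm U_{\bm x^*})^\topnew \bm A \bm U_{\bm x^*} \bigr) \bm U_{\bm x^*}^\topnew .
\end{align*}
Since $\bm A^\topnew \bm A$ is symmetric, $\bm H$ is symmetric, so by Remark~\ref{rmk:kappa} we can take $\bm Q$ orthogonal and hence $\kappa(\bm Q) = 1$.

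Next I would identify the spectrum of $\bm H$. Because $\bm H$ factors through $\bm U_{\bm x^*}$, it has rank at most $d$, contributing $n-d$ zero eigenvalues. The remaining $d$ nonzero eigenvalues coincide with those of $\bm I_d - \eta (\bm A \bm U_{\bm x^*})^\topnew \bm A \bm U_{\bm x^*}$, namely $1 - \eta \lambda_i$ for $i=1,\ldots,d$, where $\lambda_1 \geq \cdots \geq \lambda_d$ are the eigenvalues of $(\bm A \bm U_{\bm x^*})^\topnew \bm A \bm U_{\bm x^*}$. The full-rank assumption gives $\lambda_d > 0$, while the step-size condition (\ref{equ:eta2}) combined with $\lambda_1 \leq \unorm{\bm A \bm U_{\bm x^*}}_2^2$ gives $\eta \lambda_1 < 2$. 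Together these yield $-1 < 1 - \eta \lambda_1 \leq 1 - \eta \lambda_d < 1$, which shows both that all nonzero eigenvalues of $\bm H$ are strictly less than one in magnitude (so condition (2) of Theorem~\ref{theo:pgd} is met) and that $\rho(\bm H) = \max\{\uabs{1-\eta\lambda_1}, \uabs{1-\eta\lambda_d}\}$, establishing (\ref{equ:rho_noiseless}).

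The final step is to simplify the region of convergence (\ref{cond:roc}) to (\ref{cond:roc_sym}). Substituting $\kappa(\bm Q) = 1$ into the first two terms of (\ref{cond:roc}) recovers the first two terms of (\ref{cond:roc_sym}). For the third, I note that $\unorm{\nabla \P_\C(\bm z^*_\eta)}_2 = \unorm{\bm U_{\bm x^*} \bm U_{\bm x^*}^\topnew}_2 = 1$ because $\bm U_{\bm x^*} \bm U_{\bm x^*}^\topnew$ is an orthogonal projector. Plugging $\kappa(\bm Q)=1$ and $\unorm{\nabla \P_\C(\bm z^*_\eta)}_2 = 1$ into the definition (\ref{def:q}) of $q$ yields $q = u_\eta c_2(\bm x^*) + u_\eta^2 c_2(\bm z^*_\eta)$, so $(1-\rho(\bm H))/q$ matches the third term of (\ref{cond:roc_sym}). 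Invoking Theorem~\ref{theo:pgd} then completes the proof.

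I do not anticipate a genuine obstacle here: the work is essentially algebraic specialization. The only point that requires a moment of care is the eigenvalue identification for $\bm H$, since $\bm H$ is rank-deficient and one must confirm that the $n-d$ zero eigenvalues coming from the nullspace of $\bm U_{\bm x^*}^\topnew$ do not violate the diagonalizability hypothesis of condition (2) in Theorem~\ref{theo:pgd}; symmetry of $\bm H$ resolves this cleanly.
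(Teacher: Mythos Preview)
Your proposal is correct and follows essentially the same approach as the paper's proof: substitute (\ref{equ:PZT}) into (\ref{equ:H}) to obtain $\bm H = \bm U_{\bm x^*}(\bm I_d - \eta \bm U_{\bm x^*}^\topnew \bm A^\topnew \bm A \bm U_{\bm x^*})\bm U_{\bm x^*}^\topnew$, exploit symmetry to get $\kappa(\bm Q)=1$, identify the spectrum to verify (\ref{equ:rho_noiseless}) and $\rho(\bm H)<1$, and then plug $\kappa(\bm Q)=1$ and $\unorm{\nabla \P_\C(\bm z^*_\eta)}_2=1$ into (\ref{cond:roc}) and (\ref{def:q}) to obtain (\ref{cond:roc_sym}). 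The only cosmetic difference is that the paper reaches the spectrum of $\bm H$ via $\rho(\bm H)=\unorm{\bm H}_2$ and invariance of the spectral norm under semi-orthogonal multiplication, whereas you read off the eigenvalues directly; the two arguments are equivalent.
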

\noindent The proof of Corollary~\ref{cor:noiseless} is given in Appendix~\ref{appdx:noiseless}. 

\begin{remark} \label{rmk:interlace}
Recall that $u_\eta$ defined in  (\ref{def:u_eta}) is also the asymptotic linear rate of gradient descent for the unconstrained least squares \cite{polyak1987introduction}, i.e.,
\begin{align*}
    u_\eta = \max \{ \uabs{1-\eta \lambda_1(\bm A^\topnew \bm A)} , \uabs{1-\eta \lambda_n(\bm A^\topnew \bm A)} \} .
\end{align*}
Since $\bm U_{\bm x^*}$ is a semi-orthogonal matrix, the eigenvalues of $\bm U_{\bm x^*}^\topnew \bm A^\topnew \bm A \bm U_{\bm x^*}$ interlace with those of $\bm A^{\topnew} \bm A$ \cite{hwang2004cauchy}, which in turns implies $\lambda_n(\bm A^\topnew \bm A) \leq \lambda_d \leq \lambda_1 \leq \lambda_1(\bm A^\topnew \bm A)$.
Thus, one can show that for $\eta<2/\unorm{\bm A}_2^2$,
\begin{align} \label{equ:rho_unconstrained}
    \rho(\bm H) \leq u_\eta \leq 1 ,
\end{align}
with the equality $u_\eta = 1$ holding if and only if $\bm A^\topnew \bm A$ is singular. Interestingly, (\ref{equ:rho_unconstrained}) implies the presence of the constraint in this case helps accelerate the convergence of gradient descent to $\bm x^*$.
\end{remark}

\subsection{Proof of Theorem~\ref{theo:pgd}}
\label{subsec:proof1}

This section presents the proof of Theorem~\ref{theo:pgd}. 
Our key ideas are: {\em(1)} using the Lipschitz-continuous differentiability of $\P_\C$ at $\bm x^*$ and at $\bm z_\eta^*$ to establish a recursive relation on the error vector $\bm \delta^{(k)} = \bm x^{(k)} - \bm x^*$, {\em(2)} performing a change of basis $\tilde{\bm \delta}^{(k)} = \bm Q^{-1} \bm \delta^{(k)}$ to establish an asymptotically-linear quadratic system dynamic that upper-bounds the norm of the transformed error vector, {\em(3)} applying the result on the convergence of an asymptotically-linear quadratic difference equation in \cite{vu2021closed} to obtain the number of iterations required for $\unorm{\tilde{\bm \delta}^{(k)}} \leq \tilde{\epsilon} \unorm{\tilde{\bm \delta}^{(0)}}$, and {\em(4)} converting the convergence result on the transformed error $\unorm{\tilde{\bm \delta}^{(k)}}$ to the convergence result on the original error $\unorm{{\bm \delta}^{(k)}}$. 
In the following, we provide the complete proof, with some details deferred to the appendix. \\[1pt]

\noindent \textbf{Step 1:} 
Let us define the error vector of Algorithm~\ref{algo:PGD} as $\bm \delta^{(k)} = \bm x^{(k)} - \bm x^*$, for $k \in \mathbb{N}$.
Using this definition of the error vector, we can replace $ \bm x^{(k)} = \bm x^* + \bm \delta^{(k)} $ and $ \bm x^{(k+1)} = \bm x^* + \bm \delta^{(k+1)} $ into (\ref{equ:PGD}) to obtain an equivalent update on the error vector
\begin{align}
    \bm \delta^{(k+1)} &= \P_{\C} \Bigl( \bm x^* + \bm \delta^{(k)} - \eta \bm A^\topnew\bigl( \bm A(\bm x^* + \bm \delta^{(k)}) - \bm b \bigr) \Bigr) - \bm x^*.  \label{equ:delta_1}
\end{align}
Based on the definition 
of $\bm z^*_\eta $ in (\ref{equ:z}) and the
fact that $\bm x^*$ is a fixed point of the algorithm (see (\ref{equ:fixed_point})), i.e.,  $\bm x^* = \P_\C ( \bm z^*_\eta )$, we can rewrite (\ref{equ:delta_1}) as 
\begin{align*} 
    \bm \delta^{(k+1)} &= \P_\C \Bigl( \bm z^*_\eta + (\bm I - \eta \bm A^\topnew \bm A) \bm \delta^{(k)} \Bigr) - \P_{\C} (\bm z^*_\eta) . \numberthis \label{equ:delta_2}
\end{align*}
We are now in position to analyze the error update as a fixed-point iteration: $\bm \delta^{(k+1)} = \bm f( \bm \delta^{(k)})$, where $\bm f(\bm \delta) = \P_\C ( \bm z^*_\eta + (\bm I - \eta \bm A^\topnew \bm A) \bm \delta ) - \P_{\C} (\bm z^*_\eta)$.
The following lemma provides a recursive equation on the error vector that is in the form of an asymptotically-linear quadratic system dynamic:
\begin{lemma} \label{lem:delta_H}
Recall $\bm H = \nabla \P_{\C} (\bm z^*_\eta) (\bm I_n - \eta \bm A^\topnew \bm A) \nabla \P_{\C} (\bm x^*)$.
If the error vector at the $k$-th iteration satisfies
\begin{align} \label{cond:delta_2}
    \unorm{\bm \delta^{(k)}} < \min \Bigl\{ {c_1(\bm x^*)}, \frac{c_1(\bm z^*_\eta)}{u_\eta} \Bigr\} ,
\end{align}
then the error vector at the $k+1$-th iteration satisfies
\begin{align} \label{equ:delta_H}
    \bm \delta^{(k+1)} = \bm H \bm \delta^{(k)} + \bm q_2(\bm \delta^{(k)}) ,
\end{align}
where $\bm q_2: \R^n \to \R^n$ is the residual such that
\begin{align} \label{equ:q_hat}
    \unorm{\bm q_2(\bm \delta^{(k)})} \leq \bigl( \unorm{\nabla \P_{\C} (\bm z^*_\eta)}_2 u_\eta c_2(\bm x^*) + c_2(\bm z^*_\eta) u_\eta^2 \bigr) \unorm{\bm \delta^{(k)}}^2 .
\end{align}
\end{lemma}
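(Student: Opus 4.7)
The plan is to apply the Lipschitz-continuous differentiability of $\P_\C$ at the two points $\bm z^*_\eta$ and $\bm x^*$ in sequence, and then combine the two first-order approximations so that the matrix $\bm H$ in (\ref{equ:H}) emerges as the product of the two linearizations sandwiching $\bm I_n - \eta \bm A^\topnew \bm A$. My starting point is the error recursion (\ref{equ:delta_2}) together with the fixed-point identity $\P_\C(\bm z^*_\eta) = \bm x^*$.

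For the first linearization, set $\bm \delta_z = (\bm I_n - \eta \bm A^\topnew \bm A)\bm \delta^{(k)}$. By the definition of $u_\eta$ in (\ref{def:u_eta}) and the hypothesis (\ref{cond:delta_2}), $\unorm{\bm \delta_z} \leq u_\eta \unorm{\bm \delta^{(k)}} < c_1(\bm z^*_\eta)$, so $\bm z^*_\eta + \bm \delta_z$ lies in the ball of Lipschitz-continuous differentiability at $\bm z^*_\eta$. Applying Definition~\ref{def:Pc_diff} (specifically the remark following (\ref{equ:sup_Pc}) on arbitrary choices of $\P_\C$) to the right-hand side of (\ref{equ:delta_2}) then yields
\begin{align*}
\bm \delta^{(k+1)} = \nabla \P_{\C}(\bm z^*_\eta)(\bm I_n - \eta \bm A^\topnew \bm A)\bm \delta^{(k)} + \bm r_1,
\qquad \unorm{\bm r_1} \leq c_2(\bm z^*_\eta) u_\eta^2 \unorm{\bm \delta^{(k)}}^2.
\end{align*}

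For the second linearization, I exploit the fact that $\bm x^{(k)} \in \C$ (it is the image of a projection), so that $\Pi_\C(\bm x^{(k)}) = \{\bm x^{(k)}\}$ and in particular $\bm x^{(k)} \in \Pi_\C(\bm x^* + \bm \delta^{(k)})$. Since $\unorm{\bm \delta^{(k)}} < c_1(\bm x^*)$ by (\ref{cond:delta_2}) and $\P_\C(\bm x^*) = \bm x^*$, applying the supremum inequality (\ref{equ:sup_Pc}) at $\bm x^*$ to the specific point $\bm y = \bm x^{(k)}$ gives the key identity
\begin{align*}
\bm \delta^{(k)} = \nabla \P_{\C}(\bm x^*)\bm \delta^{(k)} + \bm r_2,
\qquad \unorm{\bm r_2} \leq c_2(\bm x^*) \unorm{\bm \delta^{(k)}}^2.
\end{align*}
Substituting this expression for $\bm \delta^{(k)}$ into the first linearization produces $\bm H \bm \delta^{(k)}$ plus the residual $\bm q_2(\bm \delta^{(k)}) = \nabla \P_{\C}(\bm z^*_\eta)(\bm I_n - \eta \bm A^\topnew \bm A)\bm r_2 + \bm r_1$. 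The bound (\ref{equ:q_hat}) then follows from the triangle inequality and sub-multiplicativity of the spectral norm applied to this expression.

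The main subtlety I expect is the second step: although $\Pi_\C$ near $\bm x^*$ need not be single-valued for arbitrary perturbations, the supremum formulation in (\ref{equ:sup_Pc}) is exactly what permits us to evaluate the inequality at the particular element $\bm x^{(k)}$ of its own projection set, thereby converting the Lipschitz-continuous differentiability of $\P_\C$ at $\bm x^*$ into the bound $\unorm{\bm \delta^{(k)} - \nabla \P_{\C}(\bm x^*)\bm \delta^{(k)}} \leq c_2(\bm x^*) \unorm{\bm \delta^{(k)}}^2$. Once this second linearization is in hand, the rest of the argument is bookkeeping with norm inequalities.
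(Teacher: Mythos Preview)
Your proposal is correct and follows essentially the same approach as the paper's proof: both apply the Lipschitz-continuous differentiability of $\P_\C$ first at $\bm z^*_\eta$ with perturbation $(\bm I_n - \eta \bm A^\topnew \bm A)\bm \delta^{(k)}$, then at $\bm x^*$ using $\bm x^{(k)} \in \C$ to obtain $\bm \delta^{(k)} = \nabla \P_\C(\bm x^*)\bm \delta^{(k)} + \bm r_2$, and finally substitute and bound the combined residual via the triangle inequality. Your explicit justification of the second step---that $\bm x^{(k)} \in \Pi_\C(\bm x^* + \bm \delta^{(k)})$ so that the supremum in (\ref{equ:sup_Pc}) applies---is in fact cleaner than the paper's terse invocation of $\bm x^{(k)} = \P_\C(\bm x^{(k)})$.
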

\noindent The proof of Lemma~2 is given in Appendix~\ref{appdx:delta_H}. Given the nonlinear difference equation of form (\ref{equ:delta_H}), we proceed with characterizing the convergence of the error sequence $\{ \bm \delta^{(k)} \}_{k=0}^\infty$.
\begin{remark} \label{rmk:polyak}
Dating back to 1964, Polyak \cite{polyak1964some} studied the convergence of nonlinear difference equations of form 
\begin{align} \label{equ:a_o}
    \bm a^{(k+1)} = \bm T (\bm a^{(k)}) + \bm q(\bm a^{(k)}) , \quad \text{ for } k \in \mathbb{N} ,
\end{align}
where $\bm a^{(0)} \in \R^n$, $\bm T: \R^n \to \R^n$ is a linear operator, and $\bm q: \R^n \to \R^n$ satisfies $\lim_{t \to 0} \sup_{\norm{\bm a} \leq t} \norm{\bm q(\bm a)}/\norm{\bm a} = 0$.
The author showed that if the operator $\bm T$ satisfies $\unorm{\bm T^k}_2 \leq c(\zeta) (\rho+\zeta)^k$, for some $\rho<1$ and arbitrarily small $\zeta>0$, then $\{ \bm a^{(k)} \}_{k=0}^\infty$ approaches zero with sufficiently small $\unorm{\bm a^{(0)}}$:
\begin{align} \label{equ:polyak}
    \unorm{\bm a^{(k)}} \leq C(\zeta) \unorm{\bm a^{(0)}} (\rho+\zeta)^k .
\end{align}
Here $c(\zeta)$ and $C(\zeta)$ are unknown constants that could grow to infinity as $\zeta \to 0$.
Applying this result to (\ref{equ:delta_H}) with $\bm a^{(k)} = \bm \delta^{(k)}$ and $\bm T=\bm H$, one can show that the error vector of Algorithm~\ref{algo:PGD} converges to $\bm 0$ with the asymptotic linear rate $\rho(\bm H)$, provided that $\rho(\bm H)<1$ and $\unorm{\bm \delta^{(0)}}$ is sufficiently small. 
However, we note that the proof of (\ref{equ:polyak}) in \cite{polyak1964some} is adapted from a more general result on the stability of differential equations in \cite{bellman1953stability}. This technique can not provide the precise control of the ROC and the number of iterations required to reach a certain accuracy (i.e., how small $\unorm{\bm a^{(0)}}$ is as well as how large the factor $C(\zeta)$ is) needed for our convergence analysis of PGD. Alternatively, we utilize our previous result in \cite{vu2021closed} that eliminates the dependence on $\zeta$ in the expression of the linear rate, at the cost of an additional assumption on the diagonalizability of $\bm H$.\footnote{In particular, the bound in (\ref{equ:k}) suggests $\unorm{\bm a^{(k)}} \leq C \unorm{\bm a^{(0)}} \rho^k$, for constant $C=\rho \kappa (\bm Q) e^{c_3}$, which is tighter than (\ref{equ:polyak}).} Additionally, our approach offers explicit expressions of the ROC and the number of required iterations (as in (\ref{cond:roc}) and (\ref{equ:k}), respectively).
\end{remark}

\noindent \textbf{Step 2:} 
Our approach for analyzing the convergence of the nonlinear difference equation (24) is to leverage the eigendecomposition $\bm H = \bm Q \bm \Lambda \bm Q^{-1}$ and consider the transformed error vector as follows.
\begin{lemma} \label{lem:delta_tilde}
Let $\tilde{\bm \delta}^{(k)} = \bm Q^{-1} \bm \delta^{(k)}$ be the transformed error vector.
If (\ref{cond:roc}) holds and the spectral radius of $\bm H$ is strictly less than $1$, i.e., $\rho(\bm H)<1$, then, for all $k \in \mathbb{N}$, we have
\begin{align} \label{equ:delta_tilde}
    \tilde{\bm \delta}^{(k+1)} = \bm \Lambda \tilde{\bm \delta}^{(k)} + \bm q_3 (\tilde{\bm \delta}^{(k)}) ,
\end{align}
where the residual $\bm q_3 (\tilde{\bm \delta}^{(k)}) = \bm Q^{-1} \bm q_2 (\bm Q \tilde{\bm \delta}^{(k)})$ satisfies $\unorm{\bm q_3(\tilde{\bm \delta}^{(k)})} \leq (q/\unorm{\bm Q^{-1}}_2) \unorm{\tilde{\bm \delta}^{(k)}}^2$ 
for $q$ given in (\ref{def:q}).
\end{lemma}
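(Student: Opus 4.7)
The plan is to reduce the statement to an induction on $k$ that propagates Lemma~\ref{lem:delta_H} through the iterates, combined with a direct algebraic transformation of the recursion. The transformation itself is simple: starting from (\ref{equ:delta_H}), left-multiply by $\bm Q^{-1}$, substitute $\bm H=\bm Q \bm \Lambda \bm Q^{-1}$ and $\bm \delta^{(k)} = \bm Q \tilde{\bm \delta}^{(k)}$ to get
\begin{align*}
\tilde{\bm \delta}^{(k+1)} = \bm Q^{-1} \bm H \bm Q \tilde{\bm \delta}^{(k)} + \bm Q^{-1} \bm q_2(\bm Q \tilde{\bm \delta}^{(k)}) = \bm \Lambda \tilde{\bm \delta}^{(k)} + \bm q_3(\tilde{\bm \delta}^{(k)}),
\end{align*}
so the real work is (i) certifying that the conditions of Lemma~\ref{lem:delta_H} remain valid at every iteration and (ii) bounding $\bm q_3$.

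The residual bound is a matrix-norm chain: by submultiplicativity,
\begin{align*}
\unorm{\bm q_3(\tilde{\bm \delta}^{(k)})} \leq \unorm{\bm Q^{-1}}_2 \unorm{\bm q_2(\bm Q \tilde{\bm \delta}^{(k)})} \leq \unorm{\bm Q^{-1}}_2 \bigl( \unorm{\nabla \P_{\C}(\bm z^*_\eta)}_2 u_\eta c_2(\bm x^*) + c_2(\bm z^*_\eta) u_\eta^2 \bigr) \unorm{\bm Q}_2^2 \unorm{\tilde{\bm \delta}^{(k)}}^2,
\end{align*}
using (\ref{equ:q_hat}). Since $\unorm{\bm Q^{-1}}_2 \unorm{\bm Q}_2^2 = \kappa^2(\bm Q)/\unorm{\bm Q^{-1}}_2$ and the parenthetical factor equals $q/(\kappa^2(\bm Q) u_\eta) \cdot u_\eta = q/\kappa^2(\bm Q)$ by (\ref{def:q}), the claimed bound $\unorm{\bm q_3(\tilde{\bm \delta}^{(k)})} \leq (q/\unorm{\bm Q^{-1}}_2) \unorm{\tilde{\bm \delta}^{(k)}}^2$ follows by rearranging.

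The main obstacle, and the part that needs care, is the induction showing that (\ref{cond:delta_2}) holds for \emph{every} $k$, so that Lemma~\ref{lem:delta_H} can be applied at each step. For the base case, $\unorm{\bm \delta^{(0)}} \leq \kappa(\bm Q) \unorm{\bm \delta^{(0)}}$ combined with (\ref{cond:roc}) and $\kappa(\bm Q) \geq 1$ directly gives $\unorm{\bm \delta^{(0)}} < \min\{c_1(\bm x^*), c_1(\bm z^*_\eta)/u_\eta\}$. For the inductive step, assume (\ref{cond:delta_2}) holds at iterations $0, 1, \ldots, k$; then (\ref{equ:delta_H}) and the bound on $\bm q_3$ yield
\begin{align*}
\unorm{\tilde{\bm \delta}^{(k+1)}} \leq \rho(\bm H) \unorm{\tilde{\bm \delta}^{(k)}} + \frac{q}{\unorm{\bm Q^{-1}}_2} \unorm{\tilde{\bm \delta}^{(k)}}^2 \leq \Bigl( \rho(\bm H) + \frac{q \unorm{\tilde{\bm \delta}^{(k)}}}{\unorm{\bm Q^{-1}}_2} \Bigr) \unorm{\tilde{\bm \delta}^{(k)}}.
\end{align*}
Using $\unorm{\tilde{\bm \delta}^{(0)}} \leq \unorm{\bm Q^{-1}}_2 \unorm{\bm \delta^{(0)}} < \unorm{\bm Q^{-1}}_2 (1-\rho(\bm H))/q$ from the third term of (\ref{cond:roc}), the parenthetical factor is below $1$ at $k=0$, giving $\unorm{\tilde{\bm \delta}^{(1)}} \leq \unorm{\tilde{\bm \delta}^{(0)}}$; inductively this keeps $\unorm{\tilde{\bm \delta}^{(k)}}$ nonincreasing. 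Finally, $\unorm{\bm \delta^{(k+1)}} \leq \unorm{\bm Q}_2 \unorm{\tilde{\bm \delta}^{(k+1)}} \leq \unorm{\bm Q}_2 \unorm{\bm Q^{-1}}_2 \unorm{\bm \delta^{(0)}} = \kappa(\bm Q) \unorm{\bm \delta^{(0)}}$, which by the first two terms of (\ref{cond:roc}) is below $\min\{c_1(\bm x^*), c_1(\bm z^*_\eta)/u_\eta\}$, closing the induction and completing the proof.
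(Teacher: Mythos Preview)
Your proposal is correct and follows essentially the same strategy as the paper: transform the recursion to the eigenbasis, bound $\bm q_3$ via submultiplicativity and the definition of $q$, and run an induction showing the transformed error norm is nonincreasing so that Lemma~\ref{lem:delta_H} remains applicable at every step. The paper organizes the induction directly on a three-term bound for $\unorm{\tilde{\bm \delta}^{(k)}}$, whereas you induct on (\ref{cond:delta_2}) and derive monotonicity of $\unorm{\tilde{\bm \delta}^{(k)}}$ as a side product, but the logic is the same.
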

\noindent The proof of Lemma~\ref{lem:delta_tilde} is given in Appendix~\ref{appdx:delta_tilde}.
Taking the norms of both sides of (\ref{equ:delta_tilde}) and applying the triangle inequality, we obtain 
\begin{align*}
    &\unorm{\tilde{\bm \delta}^{(k+1)}} \leq \rho(\bm H) \unorm{\tilde{\bm \delta}^{(k)}} + \frac{q}{\unorm{\bm Q^{-1}}_2} \unorm{\tilde{\bm \delta}^{(k)}}^2 . \numberthis \label{equ:norm_delta_tilde}
\end{align*}
This inequality, holding for all $k \in \mathbb{N}$, is the key to the convergence of the transformed error sequence in the next step.

\noindent \textbf{Step 3:} 
If we replace the inequality symbol in (\ref{equ:norm_delta_tilde}) by the equality symbol, then we obtain an asymptotically-linear quadratic difference equation whose convergence is studied in \cite{vu2021closed}. 
Indeed, the following lemma states that the norm of the transformed error vector is governed by this asymptotically-linear quadratic system dynamic:
\begin{lemma} \label{lem:scalar}
Assume the same setting as Lemma~\ref{lem:delta_tilde}.
Then, for any desired accuracy  $0<\tilde{\epsilon}<1$, we have $\unorm{\tilde{\bm \delta}^{(k)}} \leq \tilde{\epsilon} \unorm{\tilde{\bm \delta}^{(0)}}$ for all
\begin{align} \label{equ:k_tilde}
    k \geq \frac{\log(1/\tilde{\epsilon})}{\log(1/\rho(\bm H))} + c_3(\rho(\bm H), \tau) ,
\end{align}
where $\tau = q \unorm{\tilde{\bm \delta}^{(0)}} / \unorm{\bm Q^{-1}}_2 / (1-\rho(\bm H)) \in (0,1)$ and
\begin{align*}
    c_3 (\rho,\tau) =~ &\frac{E_1\Bigl(\log\frac{1}{\rho+\tau(1-\rho)}\Bigr) - E_1\Bigl(\log\frac{1}{\rho}\Bigr)}{\rho \log(1/\rho)}  \\
    &+ \frac{1}{2\rho} \log \biggl( \frac{\log(1/\rho)}{\log \bigl(1/(\rho+\tau(1-\rho))\bigr)} \biggr) + 1 , \numberthis \label{equ:c3}
\end{align*}
for $E_1(t) = \int_t^\infty \frac{e^{-z}}{z}dz$ being the exponential integral \cite{milton1964handbook}.
\end{lemma}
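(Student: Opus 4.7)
The plan is to reduce the vector inequality in (\ref{equ:norm_delta_tilde}) to a scalar recursion and then invoke the closed-form convergence result from \cite{vu2021closed}. Concretely, I would define a dominating scalar sequence $\{b_k\}_{k=0}^\infty$ by $b_0 = \unorm{\tilde{\bm \delta}^{(0)}}$ and
\begin{align*}
b_{k+1} = \rho\, b_k + \frac{q}{\unorm{\bm Q^{-1}}_2}\, b_k^2 ,
\end{align*}
where $\rho = \rho(\bm H) < 1$. Since the map $t \mapsto \rho\, t + (q/\unorm{\bm Q^{-1}}_2) t^2$ is nondecreasing on $[0,\infty)$, a short induction on $k$ using (\ref{equ:norm_delta_tilde}) yields $\unorm{\tilde{\bm \delta}^{(k)}} \leq b_k$ for all $k \in \mathbb{N}$, so it suffices to bound $b_k$ from above.

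Second, I would verify that the scaled quantity $\tau = q b_0 / (\unorm{\bm Q^{-1}}_2 (1-\rho))$ lies in $(0,1)$. Condition (\ref{cond:roc}) gives $\unorm{\bm \delta^{(0)}} < (1-\rho)/q$, and because $b_0 = \unorm{\bm Q^{-1} \bm \delta^{(0)}} \leq \unorm{\bm Q^{-1}}_2 \unorm{\bm \delta^{(0)}}$, the bound $\tau < 1$ follows immediately. This ensures that the scalar iteration for $b_k$ is contractive near the origin in the sense required by the cited result and matches the definition of $\tau$ in the lemma statement.

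Third, the recursion for $b_k$ is precisely an asymptotically-linear quadratic difference equation of the form studied in our prior work \cite{vu2021closed}. I would directly invoke that closed-form bound to conclude $b_k \leq \tilde{\epsilon} b_0$ for all $k$ satisfying (\ref{equ:k_tilde}), with $c_3(\rho,\tau)$ given explicitly by (\ref{equ:c3}) in terms of the exponential integral $E_1$. Intuitively, the term $\log(1/\tilde{\epsilon})/\log(1/\rho)$ captures the asymptotic linear regime, while $c_3(\rho,\tau)$ absorbs the transient deviation from pure linear convergence produced by the quadratic perturbation $b_k^2$.

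The main technical obstacle is the derivation of the closed-form expression for $c_3(\rho,\tau)$ via the exponential integral, but that analysis has already been carried out in \cite{vu2021closed} and can be treated as a black box here. What remains on the present paper's side is the monotonicity reduction that dominates the vector error sequence by a scalar one and the verification that $\tau \in (0,1)$, both of which are short. Combining these with the cited scalar bound yields (\ref{equ:k_tilde}) for $\unorm{\tilde{\bm \delta}^{(k)}}$.
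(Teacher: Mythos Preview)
Your proposal is correct and follows essentially the same approach as the paper: define the scalar surrogate sequence, dominate $\unorm{\tilde{\bm \delta}^{(k)}}$ by it via induction, and then invoke Theorem~1 of \cite{vu2021closed} under the condition $\tau<1$. The only cosmetic difference is that you spell out the monotonicity of $t\mapsto \rho t + (q/\unorm{\bm Q^{-1}}_2)t^2$ and the verification of $\tau\in(0,1)$ from (\ref{cond:roc}) a bit more explicitly than the paper does.
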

\noindent The proof of Lemma~\ref{lem:scalar} is given in Appendix~\ref{appdx:scalar}. 

\noindent \textbf{Step 4:}
Finally, we show the convergence of $\unorm{\bm \delta^{(k)}}$ based on the convergence of $\unorm{\tilde{\bm \delta}^{(k)}}$.
From (\ref{equ:k_tilde}), substituting $\tilde{\epsilon} = \epsilon/\kappa(\bm Q)$ and identifying $c_3$ as $c_3(\rho(\bm H), \tau)$, we obtain (\ref{equ:k}).
Thus, it remains to prove that the accuracy on the transformed error vector $\unorm{\tilde{\bm \delta}^{(k)}} \leq \tilde{\epsilon} \unorm{\tilde{\bm \delta}^{(0)}}$ is sufficient for the accuracy on the original error vector $\unorm{{\bm \delta}^{(k)}} \leq \epsilon \unorm{{\bm \delta}^{(0)}}$.
Indeed, given
\begin{align*}
    \unorm{\tilde{\bm \delta}^{(k)}} \leq \tilde{\epsilon} \unorm{\tilde{\bm \delta}^{(0)}} = \frac{\epsilon}{\unorm{\bm Q}_2 \unorm{\bm Q^{-1}}_2} \unorm{\tilde{\bm \delta}^{(0)}} ,
\end{align*}
we have
\begin{align*}
    \unorm{\bm \delta^{(k)}} = \unorm{\bm Q \tilde{\bm \delta}^{(k)}} &\leq \unorm{\bm Q}_2 \unorm{\tilde{\bm \delta}^{(k)}} \\
    &\leq \unorm{\bm Q}_2 \frac{\epsilon}{\unorm{\bm Q}_2 \unorm{\bm Q^{-1}}_2} \unorm{\tilde{\bm \delta}^{(0)}} \\
    &= \frac{\epsilon}{\unorm{\bm Q^{-1}}_2} \unorm{\tilde{\bm \delta}^{(0)}} \leq \epsilon \unorm{\bm \delta^{(0)}} ,
\end{align*}
where the last inequality stems from $\unorm{\tilde{\bm \delta}^{(0)}} = \unorm{\bm Q^{-1} {\bm \delta}^{(0)}} \leq \unorm{\bm Q^{-1}}_2 \unorm{\bm \delta^{(0)}}$.
This completes our proof of Theorem~\ref{theo:pgd}.

\section{Applications}
\label{sec:apps}

In this section, we demonstrate the application of our proposed framework to a collection of well-known problems in machine learning and signal processing. 
The constraint sets in these problems vary from as simple as an affine subspace (A) and a sphere (C) to more complex algebraic varieties such as the $s$-sparse vector set (B) and the low-rank matrix set (D). 
We consider both problems with known convergence rate results and problems for which the rate is unavailable. The former allows us to verify the correctness of our analysis against the known rate results, while for the latter numerical experiments are used to verify the rate. Additionally, we illustrate how ROC can be obtained for each problem.
Due to space limitation, we restrict the illustration of our framework to the four aforementioned applications. 
While we believe that additional applications can be considered (see the potential applications of our framework in  Section~V), such applications may require a more elaborate development.
Our goal in this section is to offer a recipe for analyzing the convergence of PGD for different applications using the proposed framework.
Table~\ref{tbl:recipe} describes the steps we follow to obtain the asymptotic linear rate and the region of linear convergence in each application.
Table~\ref{tbl:result} summarizes our local convergence results on the four problems presented in this section.
The detailed analysis is given below.

\subsection{Linear Equality-Constrained Least Squares}\label{subsec:LLCS}

As a sanity check, we start with a simple example of the so-called linear equality-constrained least squares (LECLS)
\begin{align} \label{prob:CLS}
    \boxed{\min_{\bm x \in \R^n}\frac{1}{2} \unorm{\bm A \bm x - \bm b}^2 \quad \text{s.t. } \bm C \bm x = \bm d ,}
\end{align}
where $\bm A \in \R^{m \times n}$, $\bm b \in \R^m$, $\bm C \in \R^{p \times n}$, and $\bm d \in \R^p$. 
In addition, we assume that $p < n$ and $\bm C$ has linearly independent rows.
The LECLS problem finds application in a wide range of areas such as linear-phase system identification \cite{hong2013matrix}, antenna array processing \cite{de2004constrained}, and adaptive array processing \cite{frost1972algorithm}.
While this problem can be solved efficiently using the method of Lagrange multipliers \cite{resende1996fast} or the method of weighting \cite{van1985method}, we limit our interest to using PGD to solve (\ref{prob:CLS}) to demonstrate the applicability of our analysis.
In the literature, this algorithm is referred to as the projected Landweber iteration \cite{dunn1981global,bertsekas1982projection,luo1993error,johansson2006application}. While these works provide bounds on the linear convergence of PGD for different variants of linear equality-constrained problems, we have not found any closed-form expression of the asymptotic rate of linear convergence.

\begin{table}[t]
    \centering
    \begin{tabularx}{\columnwidth}{|l X|}
        \hline
        \textbf{Step 1:} & Identify $\bm A$, $\bm b$, $\C$, and $\P_\C$.  \\
        \textbf{Step 2:} & Establish the conditions for $\bm x^* \in \C$ to be a \textbf{Lipschitz stationary point} of (\ref{prob:f}). In particular, {\em(i)} $\P_\C$ is Lipschitz-continuously differentiable at every $\bm x^*$ with $\nabla \P_{\C} (\bm x^*)$, $c_1(\bm x^*)$, and $c_2(\bm x^*)$; and {\em(ii)} the stationarity equation (\ref{equ:stationary}) holds. \\
        \textbf{Step 3:} & Establish the conditions for $\eta>0$ such that {\em(i)} $\bm x^*$ is a \textbf{fixed point} of Algorithm~\ref{algo:PGD} with step size $\eta$, i.e., $\bm x^* = \P_\C(\bm z^*_\eta)$, for $\bm z^*_\eta = \bm x^* - \eta \bm A^\topnew ( \bm A \bm x^*  - \bm b )$; and {\em(ii)} $\P_\C$ is Lipschitz-continuously differentiable at $\bm z^*_\eta$ with $\nabla \P_\C(\bm z^*_\eta)$, $c_1(\bm z^*_\eta)$, and $c_2(\bm z^*_\eta)$. \\
        \textbf{Step 4:} & Determine the \textbf{asymptotic linear rate} $\rho$ as the spectral radius of $\bm H$ given by (\ref{equ:H}). (If $\nabla \P_{\C} (\bm z^*_\eta) = \nabla \P_\C (\bm x^*)$, (\ref{equ:rho_noiseless}) can be used instead.) \\
        \textbf{Step 5:} & Establish the conditions for $\rho<1$, which guarantees local linear convergence. Thereby, combine these conditions with the previous conditions obtained from Steps~2 and 3. \\
        \textbf{Step 6:} & If $\bm H$ is diagonalizable, determine the \textbf{region of linear convergence} given by (\ref{cond:roc}). (If $\nabla \P_{\C} (\bm z^*_\eta) = \nabla \P_\C (\bm x^*)$, (\ref{cond:roc_sym}) can be used instead.) \\
        \hline
    \end{tabularx}
    \caption{General recipe for local convergence analysis.}
    \label{tbl:recipe}
\end{table}

\renewcommand{\arraystretch}{3}
\begin{table*}
\begin{center}
\resizebox{\linewidth}{!}{
\begin{tabular}{|l|l|l|l|}
\hline
\textbf{Problem formulation} & \textbf{Condition(s) for linear convergence} & \textbf{Asymptotic rate of convergence $\rho$} & \textbf{Region of convergence} \\
\hline
\makecell{$\min \frac{1}{2} \unorm{\bm A \bm x - \bm b}^2$ \\ s.t. $\bm C \bm x = \bm d$} & $\begin{cases} \bm K = (\bm A \bm V_C^\perp)^{\topnew} \bm A \bm V_C^\perp \text{ has full rank } \\ 0 < \eta < 2/\unorm{\bm A \bm V_C^\perp}_2^2 \end{cases}$ & $\max \{ \uabs{1-\eta \lambda_1(\bm K)} , \uabs{1-\eta \lambda_{n-p}(\bm K)} \}$ & $\unorm{\bm x - \bm x^*} < \infty$ \\
\hline 
\makecell{$\min \frac{1}{2} \unorm{\bm A \bm x - \bm b}^2$ \\ s.t. $\unorm{\bm x}_0 \leq s$} & $\begin{cases} \bm K = (\bm A \bm S_{\bm x^*})^{\topnew} \bm A \bm S_{\bm x^*} \text{ has full rank } \\ 0 < \eta < \min \{ \frac{2}{\unorm{\bm A \bm S_{\bm x^*}}_2^2} , \frac{\uabs{x^*_{[s]}}}{\unorm{\bm v^*}_\infty} \} \end{cases}$ & $\max \{ \abs{1-\eta \lambda_1(\bm K)} , \abs{1-\eta \lambda_s(\bm K)} \}$ & $\unorm{\bm x - \bm x^*} < \min \{ \frac{\uabs{x^*_{[s]}}}{\sqrt{2}}, \frac{\uabs{x^*_{[s]}} - \eta \unorm{\bm v^*}_\infty}{\sqrt{2} \unorm{\bm I_n - \eta \bm A^\topnew \bm A}_2} \}$ \\
\hline 
\makecell{$\min \frac{1}{2} \unorm{\bm A \bm x - \bm b}^2$ \\ s.t. $\unorm{\bm x} = 1$} & $\begin{cases} 0<\eta<\infty \quad \text{ if } \gamma \leq -\lambda_1(\bm K) \\ 0<\eta<\frac{2}{\gamma + \lambda_1(\bm K)} \quad \text{ o.t.w.} \end{cases}$ & $\frac{1}{{1-\eta \gamma}} \max \{ \uabs{1 - \eta \lambda_1(\bm K)} , \uabs{1 - \eta \lambda_{n-1}(\bm K)} \} $ & $\unorm{\bm x - \bm x^*} \leq \frac{1-\rho}{2(t^2 + t)}, ~t = \frac{\unorm{\bm I_n - \eta \bm A^\topnew \bm A}_2} {1-\eta \gamma}$ \\
\hline
\makecell{$\min \frac{1}{2} \unorm{\P_{\Omega} (\bm X - \bm M)}_F^2$ \\ s.t. $\rank(\bm X) \leq r$} & $\begin{cases} \bm K = \bm Q_{\perp}^\topnew \bm S_\Omega \bm S_{\Omega}^\topnew \bm Q_{\perp} \text{ has full rank} \\ 0 < \eta < \frac{2}{\unorm{\bm Q_{\perp}^\topnew \bm S_\Omega \bm S_{\Omega}^\topnew \bm Q_{\perp}}_2} \end{cases}$ & $\max \{ \uabs{1 - \eta \lambda_1(\bm K)} , \uabs{1 - \eta \lambda_{r(m+n-r)}(\bm K)} \}$ & $\unorm{\bm X - \bm X^*}_F \leq \frac{(1 - \rho) \sigma_r(\bm X^*)}{8(1+\sqrt{2})}$ \\
\hline 
\end{tabular}
}
\caption{Summary of local convergence analysis for four problems: linear equality-constrained least squares (Sec.~\ref{subsec:LLCS}), sparse recovery (Sec.~\ref{subsec:IHTSR}), least squares with a unit norm constraint (Sec.~\ref{subsec:LSUNC}), and matrix completion (Sec.~\ref{subsec:MCP}). 
In the second row, $\bm v^* = \bm A^\topnew (\bm A \bm x^* - \bm b)$.
In the third row, $\bm K = (\bm A \bm U_{\bm x^*})^{\topnew} \bm A \bm U_{\bm x^*}$.
We refer the reader to each of the corresponding sections for further details.}
\label{tbl:result}
\end{center}
\end{table*}

\noindent \textbf{Step 1:} 
In this example, $\bm A$ and $\bm b$ are given explicitly in (\ref{prob:CLS}). The constraint set $\C$ is the closed convex affine subspace 
\begin{align*}
    \C = \{ \bm x \in \R^n \mid \bm C \bm x = \bm d \} .
\end{align*}
The orthogonal projection onto this subspace is given in a closed-form expression as $\P_{\C} (\bm x) = \bm x - \bm C^{\topnew} (\bm C \bm C^{\topnew})^{-1} (\bm C \bm x - \bm d)$, for all $\bm x \in \R^n$ \cite{meyer2000matrix}.
Since $\bm C$ has full row rank, it admits a compact singular value decomposition (SVD) $\bm C = \bm U_C \bm \Sigma_C \bm V_C^\topnew$, where $\bm \Sigma_C \in \R^{p \times p}$ is a diagonal matrix with positive diagonal entries, $\bm U_C \in \R^{p \times p}$ and $\bm V_C \in \R^{n \times p}$ satisfy $\bm U_C^\topnew \bm U_C = \bm V_C^\topnew \bm V_C = \bm I_p$. 
Denote $\bm V_C^\perp \in \R^{n \times (n-p)}$ the orthogonal complement of $\bm V_C$, i.e., $\bm V_C^\perp (\bm V_C^\perp)^\topnew = \bm I_n - \bm V_C \bm V_C^\topnew$ and $(\bm V_C^\perp)^\topnew \bm V_C^\perp = \bm I_{n-p}$.
Substituting the SVD of $\bm C$ back into the aforementioned expression of $\P_\C$ yields
\begin{align*}
    \P_\C (\bm x) = \bm V_C^\perp (\bm V_C^\perp)^\topnew \bm x + \tilde{\bm d} , \numberthis \label{equ:CLS_P0}
\end{align*}
where $\tilde{\bm d} = \bm V_C \bm \Sigma_C^{-1} \bm U_C^\topnew \bm d = \bm C^\dagger \bm d$.

\noindent \textbf{Step 2:} 
From (\ref{equ:CLS_P0}), we obtain the difference between the two projections of $\bm x+\bm \delta$ and $\bm x$ onto $\C$, for any $\bm x, \bm \delta \in \R^n$, as $\P_\C(\bm x + \bm \delta) - \P_\C(\bm x) = \bm V_C^\perp (\bm V_C^\perp)^\topnew \bm \delta$.
Using Definition~\ref{def:Pc_diff} with the note that $\Pi_\C(\bm x + \bm \delta)$ is always singleton, we have $\P_\C$ is Lipschitz-continuously differentiable at every $\bm x \in \R$ with 
\begin{align} \label{equ:PT_Lipschitz}
    \nabla \P_\C(\bm x) = \bm V_C^\perp (\bm V_C^\perp)^\topnew ,~ c_1(\bm x)=\infty ,~ c_2(\bm x)=0 .
\end{align}
Due to the independence from $\bm x$, we also have $\P_\C$ is Lipschitz-continuously differentiable at every $\bm x^* \in \C$ with 
\begin{align*}
    \nabla \P_\C(\bm x^*) = \bm V_C^\perp (\bm V_C^\perp)^\topnew ,~ c_1(\bm x^*)=\infty ,~ c_2(\bm x^*)=0 .
\end{align*}
Next, substituting $\nabla \P_\C(\bm x^*) = \bm V_C^\perp (\bm V_C^\perp)^\topnew$ into the stationarity equation (\ref{equ:stationary}) yields
$\bm V_C^\perp (\bm V_C^\perp)^\topnew \bm A^\topnew\bigl( \bm A \bm x^* - \bm b \bigr) = \bm 0$.
Since $\bm V_C^\perp \in \R^{n \times (n-p)}$ has full-rank, we can omit the left most $\bm V_C^\perp$ and obtain the condition for $\bm x^* \in \C$ to be a Lipschitz stationary point of (\ref{prob:CLS}) as
\begin{align} \label{equ:LCLS_AV}
    (\bm A \bm V_C^\perp)^\topnew ( \bm A \bm x^* - \bm b ) = \bm 0 ,
\end{align}
which means $\bm A \bm x^* - \bm b$ is in the left null space of $\bm A \bm V_C^\perp$.\footnote{Here, it is interesting to note that any stationary point of (\ref{prob:CLS}) is a global minimizer since (\ref{prob:CLS}) is a convex optimization problem.}

\noindent \textbf{Step 3:} 
Evaluating the projection in (\ref{equ:CLS_P0}) at $\bm z_\eta^* = \bm x^* - \eta \bm A^\topnew ( \bm A \bm x^*  - \bm b )$ and using the stationarity condition (\ref{equ:LCLS_AV}) to eliminate the term $\eta \bm V_C^\perp (\bm V_C^\perp)^\topnew \bm A^\topnew ( \bm A \bm x^*  - \bm b )$, we have $ \P_\C (\bm z_\eta^*) = \bm x^*$ for any $\eta>0$.
Thus, the condition in this step for $\bm x^*$ to be a fixed point of Algorithm~\ref{algo:PGD} is $\eta>0$.
In addition, substituting $\bm x = \bm z_\eta^*$ into (\ref{equ:PT_Lipschitz}), we obtain $\P_\C$ is Lipschitz-continuously differentiable at $\bm z^*_\eta$ with
\begin{align*}
    \nabla \P_\C(\bm z^*_\eta) = \bm V_C^\perp (\bm V_C^\perp)^\topnew ,~ c_1(\bm z^*_\eta)=\infty ,~ c_2(\bm z^*_\eta)=0 .
\end{align*}

\noindent \textbf{Step 4:} 
Since $\nabla \P_{\C} (\bm z^*_\eta) = \nabla \P_\C (\bm x^*) = \bm V_C^\perp (\bm V_C^\perp)^\topnew$, using (\ref{equ:rho_noiseless}), we obtain the asymptotic linear rate as
\begin{align*} 
    \rho &= \max \{ \uabs{1-\eta \lambda_1} , \uabs{1-\eta \lambda_{n-p}} \} , \numberthis \label{equ:rho_egC}
\end{align*}
where $\lambda_1$ and $\lambda_{n-p}$ are the largest and smallest eigenvalues of $(\bm A \bm V_C^\perp)^{\topnew} \bm A \bm V_C^\perp$, respectively.

\noindent \textbf{Step 5:}
From (\ref{equ:rho_egC}), we have $\rho < 1$ if and only if $(\bm A \bm V_C^\perp)^{\topnew} \bm A \bm V_C^\perp$ has full rank and $0 < \eta < 2/\unorm{\bm A \bm V_C^\perp}_2^2$. It is noted that the latter condition is sufficient for the condition $\eta>0$ in Step~3.

\noindent \textbf{Step 6:} Since $c_1(\bm x^*) = c_1(\bm z^*_\eta) = \infty$ and $c_2(\bm x^*) = c_2(\bm z^*_\eta) = 0$, the region of convergence given by (\ref{cond:roc_sym}) is the entire space $\R^n$, which implies global convergence.

\begin{remark}
The explicit expression of the convergence rate in (\ref{equ:rho_egC}) offers a simple method to select the optimal step size:
\begin{align*}
    \eta_{opt} &= \argmin_{0<\eta<2/\unorm{\bm A \bm V_C^\perp}_2^2} \max \{ \uabs{1-\eta \lambda_1} , \uabs{1-\eta \lambda_{n-p}} \} \\
    &= \frac{2}{\lambda_{1}+\lambda_{n-p}} . \numberthis \label{equ:CLS_eta_opt}
\end{align*}
Using $\eta = \eta_{opt}$, we obtain the optimal rate of convergence
\begin{align*}
    \rho_{opt} &= 1 - \frac{2}{\kappa((\bm A \bm V_C^\perp)^{\topnew} \bm A \bm V_C^\perp) + 1} . \numberthis \label{equ:CLS_rho_opt}
\end{align*}
As a comparison, the optimal convergence rate of gradient descent for the unconstrained problem is given by \cite{polyak1987introduction}
\begin{align*}
    u_{opt} = 1 - \frac{2}{\kappa(\bm A^\topnew \bm A) + 1} .
\end{align*}
Recall from Remark~\ref{rmk:interlace} that $\rho_{opt} \leq u_{opt}$ due to the interlacing of eigenvalues of $(\bm A \bm V_C^\perp)^{\topnew} \bm A \bm V_C^\perp$ and $\bm A^\topnew \bm A$.

\end{remark}

\subsection{Iterative Hard Thresholding for Sparse Recovery}\label{subsec:IHTSR}

In compressed sensing, one would like to reconstruct a sparse signal by finding solutions to under-determined linear systems $\bm A \bm x = \bm b$, where $\bm A \in \R^{m \times n}$ and $\bm b \in \R^m$ (for $m < n$). This problem can be formulated as an L0-norm constrained least squares:
\begin{align} \label{prob:sparse}
    \boxed{\min_{\bm x \in \R^n} \frac{1}{2} \unorm{\bm A \bm x - \bm b}^2 \quad \text{s.t. } \unorm{\bm x}_0 \leq s .}
\end{align}
In the literature, the PGD algorithm for solving (\ref{prob:sparse}) is often known as iterative hard thresholding (IHT), with myriad applications in medical imaging \cite{dogandvzic2011mask}, MIMO communication \cite{gao2014priori,stockle2016channel}, antenna arrays \cite{stoeckle2015doa}, and scene recognition \cite{yu2018homotopy}.
The convergence of a special case of IHT in which $\unorm{\bm A}_2 < 1$ and $\eta=1$ has been well-studied in \cite{blumensath2008iterative, blumensath2009iterative}, under the restricted isometry property (RIP) assumption on $\bm A$.
In the following, we demonstrate the application of our framework to establishing a local convergence analysis of IHT with a range of different step sizes, without requiring the RIP of $\bm A$.

\noindent \textbf{Step 1:} 
In this example, $\bm A$ and $\bm b$ are given explicitly in (\ref{prob:sparse}), and the constraint set $\C$ is the closed non-convex set of $s$-sparse vectors
\begin{align*}
    \C = \{ \bm x \in \R^n \mid \unorm{\bm x}_0 \leq s \} ,
\end{align*}
with the projection $\P_\C: \R^n \to \R^n$ given by \cite{blumensath2008iterative}
\begin{align} \label{equ:sparse_P}
    [\P_\C (\bm x)]_i = \begin{cases}
    0 & \text{if } \uabs{x_i} < \uabs{x_{[s]}}  \\
    x_i & \text{if } \uabs{x_i} \geq \uabs{x_{[s]}} 
    \end{cases} \text{ for } i=1,\ldots,n ,
\end{align}
where $x_i$ and $x_{[s]}$ denote the $i$th coordinate and the $s$th largest (in magnitude) element of a vector $\bm x \in \R^n$, respectively. In the case $\bm x$ has multiple elements with the same magnitude as $x_{[s]}$, e.g., $x_{[s]} = x_{[s+1]} > 0$, we sort these entries based on the (descending) lexicographical order so that (\ref{equ:sparse_P}) is well-defined (see \cite{blumensath2008iterative}-p.~10).

\noindent \textbf{Step 2:}
In contrast to the previous example, the projection here is nonlinear and non-unique since the set $\C$ is a real algebraic variety but not smooth in those points in $\R^n$ of sparsity strictly less than $s$. 
The smooth part of $\C$ is the subset
\begin{align*}
    \C_{=s} = \{ \bm x \in \R^n \mid \unorm{\bm x}_0 = s \}
\end{align*}
of vectors with exactly $s$ non-zero elements. 
 
In Supplementary Material Section~III, we show that any $\bm x^* \in \Phi_{= s}$ and $\bm x \in \B(\bm x^*, \uabs{x^*_{[s]}}/\sqrt{2})$ share the same index set of $s$-largest elements (in magnitude), denoted by $\Omega_s(\bm x^*)$.\footnote{It is interesting to note that $\uabs{x^*_{[s]}}/\sqrt{2}$ is the largest possible radius. A counter-example is also constructed in Supplementary Material Section~III.}
Let the indices in $\Omega_s(\bm x^*)$ be $i_1 \leq \ldots \leq i_s$ and $\bm S_{\bm x^*} = [\bm e_{i_1}, \ldots , \bm e_{i_s}] \in \R^{n \times s}$.
Then, we have $(\bm S_{\bm x^*})^\topnew \bm S_{\bm x^*} = \bm I_s$ and
\begin{align} \label{equ:IHT_sqrt2}
    \P_\C (\bm x) = \bm S_{\bm x^*} \bm S_{\bm x^*}^{\topnew} \bm x , \qquad \forall \bm x \in \B(\bm x^*, \uabs{x^*_{[s]}}/\sqrt{2}) .
\end{align}
By Definition~\ref{def:Pc_diff}, we obtain $\P_\C$ is Lipschitz-continuously differentiable at any $\bm x^* \in \Phi_{= s}$ with
\begin{align*}
    \nabla \P_\C (\bm x^*) = \bm S_{\bm x^*} \bm S_{\bm x^*}^{\topnew} ,~ c_1(\bm x^*) = \frac{1}{\sqrt{2}} \uabs{x^*_{[s]}} ,~ c_2(\bm x^*) = 0 .
\end{align*}
Similar to the previous example, the stationarity equation for $\bm x^* \in \C_{=s}$ is given by
\begin{align} \label{equ:sparse_stationary}
    (\bm A \bm S_{\bm x^*})^\topnew ( \bm A \bm x^* - \bm b ) = \bm 0 .
\end{align}
Thus, we obtain the conditions for $\bm x^* \in \C$ to be a Lipschitz stationary point of (\ref{prob:sparse}) are $\bm x^* \in \C_{=s}$ and the vector $\bm v^* = \bm A^\topnew (\bm A \bm x^* - \bm b)$ satisfies $v^*_i=0$ for all $i \in \Omega_s(\bm x^*)$.

\noindent \textbf{Step 3:} 
First, following a similar approach to that in \cite{blumensath2008iterative}, we show that the condition in this step for $\bm x^*$ to be a fixed point of Algorithm~\ref{algo:PGD} is
\begin{align*}
    0<\eta< \frac{\uabs{x^*_{[s]}}}{\unorm{\bm v^*}_\infty} . \numberthis \label{equ:sparse_eta}
\end{align*}
Since $v^*_i=0$ for all $i \in \Omega_s(\bm x^*)$, we have $\bm z^*_\eta = \bm x^* - \eta \bm v^*$ satisfies $(\bm z^*_\eta)_i = \bm x^*_i$ for all $i \in \Omega_s(\bm x^*)$. Moreover, for any indices $i \in \Omega_s(\bm x^*)$ and $j \in \{1,\ldots,n\} \setminus \Omega_s(\bm x^*)$, we have
\begin{align*}
    \uabs{(z^*_\eta)_j} &= \uabs{x^*_j - \eta v^*_j} = \eta \uabs{v^*_j} \\
    &< \frac{\uabs{x^*_{[s]}}}{\unorm{\bm v^*}_\infty} \uabs{v^*_j} \leq \uabs{x^*_{[s]}} \leq \uabs{x^*_i} = \uabs{(z^*_\eta)_i} ,
\end{align*}
where the second inequality stems from $\uabs{v^*_j} \leq \unorm{\bm v^*}_\infty$.
Therefore, $\Omega_s(\bm x^*)$ contains the $s$-largest (in magnitude) elements of $\bm z^*_\eta$, and hence, $\bm x^* = \P_\C(\bm z^*_\eta)$.

Second, we consider the Lipschitz-continuous differentiability of $\P_\C$ at $\bm z^*_\eta$.
Given $\eta$ in (\ref{equ:sparse_eta}), by the same argument as in Supplementary Material Section~III, one can show that every point in $\B(\bm z^*_\eta, (\uabs{(z^*_\eta)_{[s]}} - \uabs{(z^*_\eta)_{[s+1]}})/\sqrt{2})$ shares the same index set of $s$-largest elements (in magnitude) with $\bm z^*_\eta$, which is $\Omega_s(\bm x^*)$.
Here, we note that $\uabs{(z^*_\eta)_{[s]}} - \uabs{(z^*_\eta)_{[s+1]}} = \uabs{x^*_{[s]}} - \eta \unorm{\bm v^*}_\infty$.
Thus, we obtain $\P_\C$ is Lipschitz-continuously differentiable at $\bm z^*_\eta$ with 
\begin{align*}
    &\nabla \P_\C (\bm z^*_\eta) = \bm S_{\bm x^*} \bm S_{\bm x^*}^{\topnew} , \\
    &c_1(\bm z^*_\eta) = \frac{1}{\sqrt{2}} \bigl( \uabs{x^*_{[s]}} - \eta \unorm{\bm v^*}_\infty \bigr) , \quad c_2(\bm z^*_\eta) = 0 .
\end{align*}

\noindent \textbf{Step 4:} 
Since $\nabla \P_{\C} (\bm z^*_\eta) = \nabla \P_\C (\bm x^*) = \bm S_{\bm x^*} \bm S_{\bm x^*}^{\topnew}$, using (\ref{equ:rho_noiseless}), we obtain the asymptotic linear rate as
\begin{align*} 
    \rho &= \max \{ \abs{1-\eta \lambda_1} , \abs{1-\eta \lambda_s} \} . \numberthis \label{equ:rho_sparse}
\end{align*}
where $\lambda_1$ and $\lambda_s$ are the largest and smallest eigenvalues of $(\bm A \bm S_{\bm x^*})^{\topnew} \bm A \bm S_{\bm x^*}$, respectively.

\noindent \textbf{Step 5:} 
From (\ref{equ:rho_sparse}), $\rho < 1$ if and only if $(\bm A \bm S_{\bm x^*})^{\topnew} \bm A \bm S_{\bm x^*}$ has full rank and
\begin{align} \label{equ:sparse_eta2}
    0 < \eta < \frac{2}{\unorm{\bm A \bm S_{\bm x^*}}_2^2} .
\end{align}
Combining (\ref{equ:sparse_eta}) and (\ref{equ:sparse_eta2}) yields the condition on the step size
\begin{align} \label{equ:sparse_eta_roc}
    0 < \eta < \min \biggl\{ \frac{2}{\unorm{\bm A \bm S_{\bm x^*}}_2^2} , \frac{\uabs{x^*_{[s]}}}{\unorm{\bm v^*}_\infty} \biggr\} .
\end{align}
Here, we note that the condition $(\bm A \bm S_{\bm x^*})^{\topnew} \bm A \bm S_{\bm x^*}$ has full rank is related to the restricted isometry property (RIP) assumption on $\bm A$:
$(1-\delta_s) \norm{\bm x}^2 \leq \norm{\bm A \bm x}^2 \leq (1+\delta_s) \norm{\bm x}^2$, for $\delta_s \in (0,1)$ and any $s$-sparse vector $\bm x \in \R^n$ \cite{candes2005decoding}. In the reduced-form, we can rewrite the RIP assumption as
\begin{align} \label{equ:RIP}
    0 < (1-\delta_s) \norm{\bm y}^2 \leq \norm{\bm A \bm S \bm y}^2 \leq (1+\delta_s) \norm{\bm y}^2 ,
\end{align}
for any $\bm y \in \R^s$ and any selection matrix $\bm S \in \R^{n \times s}$ obtained by randomly choosing $s$ columns from the $n \times n$ identity matrix. Substituting $\bm S = \bm S_{\bm x^*}$ into (\ref{equ:RIP}), we obtain $(\bm A \bm S_{\bm x^*})^{\topnew} \bm A \bm S_{\bm x^*}$ has full rank.

\noindent \textbf{Step 6:}
Recall that $c_2(\bm x^*) = c_2(\bm z^*_\eta) = 0$.
From (\ref{cond:roc_sym}), the region of convergence is given by
\begin{align} \label{equ:sparse_roc}
    \unorm{\bm x - \bm x^*} < \min \biggl\{ \frac{\uabs{x^*_{[s]}}}{\sqrt{2}}, \frac{\uabs{x^*_{[s]}} - \eta \unorm{\bm v^*}_\infty}{\sqrt{2} \unorm{\bm I_n - \eta \bm A^\topnew \bm A}_2} \biggr\} . 
\end{align}

\begin{figure}[t]
    \centering
    \includegraphics[scale=.61]{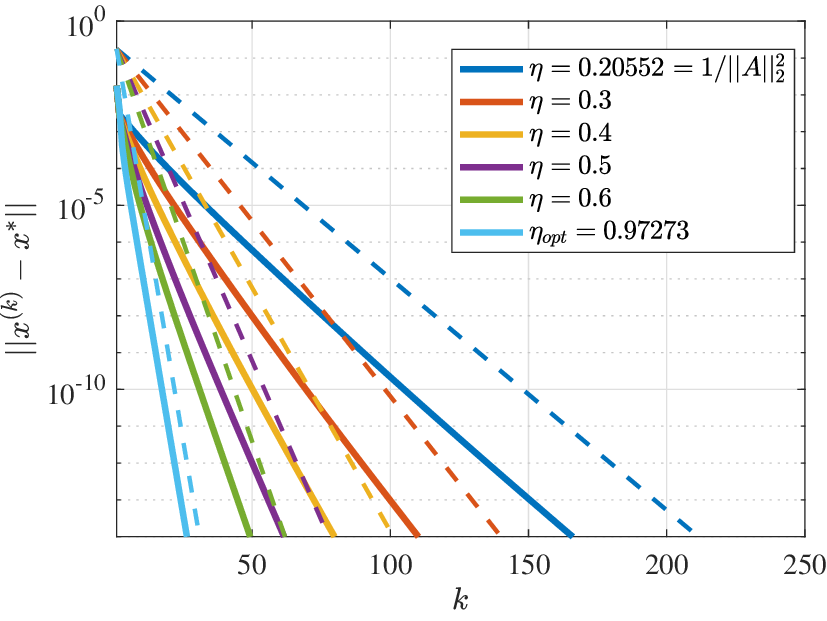}
    \caption{(Log-scale) plot of the distance between the current iterate and the local minimizer of the sparse recovery problem, as a function of the number of iterations. Each solid line corresponds to PGD with a different fixed step size. Each dashed line represents the respective exponential bound $\rho^k$ up to a constant, where the theoretical rate $\rho$ is given by (\ref{equ:rho_sparse}). In the experiment, we select $m=200, n=300$, and $s=10$. The optimal step size $\eta_{opt}=0.97273$ is computed by (\ref{equ:sparse_opt}), with the corresponding optimal rate $\rho_{opt}=0.3613$.}
    \label{fig:sparse}
\end{figure}

\begin{remark}
Similar to (\ref{equ:CLS_eta_opt}) and (\ref{equ:CLS_rho_opt}), the optimal step size and the optimal convergence rate are given by
\begin{align*}
    \eta_{opt} &= \frac{2}{\lambda_{1}((\bm A \bm S_{\bm x^*})^{\topnew} \bm A \bm S_{\bm x^*})+\lambda_{s}((\bm A \bm S_{\bm x^*})^{\topnew} \bm A \bm S_{\bm x^*})} , \\
    \rho_{opt} &= 1 - \frac{2}{\kappa((\bm A \bm S_{\bm x^*})^{\topnew} \bm A \bm S_{\bm x^*}) + 1} . \numberthis \label{equ:sparse_opt}
\end{align*}
We consider the following numerical experiment to verify the analytical rate in (\ref{equ:rho_sparse}). We start by generating $\bm A$, $\bm x^*$, and $\bm b$ as follows. First, we sample an $200 \times 300$ sensing matrix $\bm A$ with $i.i.d$ Gaussian distributed entries $\mathcal{N}(0,1/200)$.\footnote{Note that such random matrix is shown to satisfy the RIP constraint \cite{candes2005decoding}.}
Next, we create a $10$-sparse solution $\bm x^*$ by randomly selecting $10$ coordinates and assigning non-zero values to them based on $i.i.d$ normal distribution $\mathcal{N}(0,1)$.
Finally, we set $\bm b = \bm A \bm x^*$.
We apply PGD with different step sizes (listed in Fig.~\ref{fig:sparse}) including $\rho_{opt}$ in (\ref{equ:sparse_opt}) and record the value of $\norm{\bm x^{(k)} - \bm x^*}$ as a function of $k$. In Fig.~\ref{fig:sparse}, the aforementioned curves are presented along with their analytic bounds given by $\rho^k$ (up to a constant). 
The match in the slope between the analytic rate curve and the empirical rate curve verifies the analytic rate predicts accurately the asymptotic rate obtained empirically.
\end{remark}

\begin{remark} \label{rmk:IHT_local_min}
In Supplementary Material Section~III, we further show that any stationary point $\bm x^*$ must be a local minimum of (\ref{prob:sparse}). Moreover, the condition $(\bm A \bm S_{\bm x^*})^{\topnew} \bm A \bm S_{\bm x^*}$ has full rank in Step~5 implies $\bm x^*$ is a \textbf{strict} local minimum of (\ref{prob:sparse}). 
Finally, it is interesting to note that in \cite{blumensath2008iterative}, the authors assume $\norm{\bm A}_2<1$ and select $\eta=1$. With these assumptions, the rate in (\ref{equ:rho_sparse}) simplifies to $\rho = 1-\lambda_s((\bm A \bm S_{\bm x^*})^{\topnew} \bm A \bm S_{\bm x^*}) = \norm{\bm I_s - (\bm A \bm S_{\bm x^*})^{\topnew} \bm A \bm S_{\bm x^*}}_2$, which is consistent with Eqn.~(3.9) in \cite{blumensath2008iterative}.
\end{remark}

\subsection{Least Squares with the Unit Norm Constraint}\label{subsec:LSUNC}

A common constraint that arises in regularization methods for ill-posed problems is the spherical constraint \cite{tarantola2005inverse,menke2018geophysical,hager2001minimizing}. In particular, we consider the following optimization problem
\begin{align} \label{prob:sphere}
    \boxed{\min_{\bm x \in \R^n} \frac{1}{2} \unorm{\bm A \bm x - \bm b}^2 \quad \text{s.t. } \unorm{\bm x} = 1 ,}
\end{align}
where $\bm A \in \R^{m \times n}$ and $\bm b \in \R^m$. 

\noindent \textbf{Step 1:} 
In this example, $\bm A$ and $\bm b$ are given explicitly in (\ref{prob:sphere}), and the constraint set $\C$ is the closed non-convex sphere 
\begin{align*}
    \C = \{ \bm x \in \R^n \mid \unorm{\bm x}=1 \} ,
\end{align*}
with the projection $\P_\C: \R^n \to \R^n$ given by
\begin{align} \label{equ:sphere_P}
    \P_{\C} (\bm x) = \begin{cases}
        \frac{\bm x}{\unorm{\bm x}} &\text{if } \bm x \neq \bm 0 , \\
        \bm e_1 &\text{if } \bm x = \bm 0 .
    \end{cases}  
\end{align}

\noindent \textbf{Step 2:}
In Example~\ref{eg:c2_Ps}, we showed that the projection onto the unit sphere is Lipschitz-continuously differentiable at any $\bm x \neq \bm 0$. 
Since $\bm 0 \not \in \C$, we have $\P_\C$ is Lipschitz-continuously differentiable at every $\bm x^* \in \C$ with 
\begin{align*}
    \nabla P_{\C}(\bm x^*) = \bm I_n - \bm x^* (\bm x^*)^\topnew ,~c_1(\bm x^*)=\infty ,~c_2(\bm x^*)=2 .
\end{align*}
In addition, substituting $\nabla P_{\C}(\bm x^*) = \bm I_n - \bm x^* (\bm x^*)^\topnew$ into the stationarity equation (\ref{equ:stationary}) yields $\bigl( \bm I_n - \bm x^* (\bm x^*)^\topnew \bigr) \bm A^\topnew (\bm A \bm x^* - \bm b) = \bm 0$.
Equivalently, we have 
\begin{align} \label{equ:sphere_stationary}
    \bm A^\topnew (\bm A \bm x^* - \bm b) = \gamma \bm x^* ,
\end{align}
where $\gamma = (\bm x^*)^\topnew \bm A^\topnew (\bm A \bm x^* - \bm b)$ is the Lagrange multiplier at $\bm x^*$ (see Lemma~1 in \cite{vu2019convergence}). 
Thus, we obtain the condition for $\bm x^* \in \C$ to be a Lipschitz stationary point of (\ref{prob:sphere}) is $\bm x^*$ and $\bm A^\topnew (\bm A \bm x^* - \bm b)$ are collinear.

\noindent \textbf{Step 3:}
First, the necessary condition for $\P_\C$ to be Lipschitz-continuously differentiable at $\bm z^*_\eta$ is $\bm z^*_\eta \in \sing \Pi_\C$, i.e., $\bm z^*_\eta \neq \bm 0$. From (\ref{equ:sphere_stationary}), we have $\bm z^*_\eta = \bm x^* - \eta \bm A^\topnew ( \bm A \bm x^* - \bm b ) = (1-\eta \gamma) \bm x^*$.
Hence, $\bm z^*_\eta \neq \bm 0$ is equivalent to $1-\eta \gamma \neq 0$.
Now the projection $\P_\C$ at $\bm z^*_\eta \neq \bm 0$ is given by
\begin{align*}
    \P_\C(\bm z^*_\eta) = \frac{(1-\eta \gamma) \bm x^*}{\unorm{(1-\eta \gamma) \bm x^*}} = \frac{1-\eta \gamma}{\uabs{1-\eta \gamma}} \bm x^* ,
\end{align*}
which implies $\bm x^* = \P_\C(\bm z^*_\eta)$ if and only if $1-\eta \gamma>0$.
Thus, we obtain the condition for $\eta>0$ such that $\bm x^*$ is a fixed point of Algorithm~\ref{algo:PGD} and $\P_\C$ is Lipschitz-continuously differentiable at $\bm z^*_\eta$ is $1-\eta \gamma>0$, which is equivalent to
\begin{align} \label{equ:sphere_eta0}
    \begin{cases}
        \eta \in (0,\infty) &\text{if } \gamma \leq 0 ,\\
        \eta \in (0, \frac{1}{\gamma}) &\text{if } \gamma > 0 .
    \end{cases}
\end{align}

Second, it follows from (\ref{equ:sphere_off}) that $\P_\C$ is Lipschitz-continuously differentiable at $\bm z^*_\eta$ with
\begin{align*}
    \nabla P_{\C}(\bm z^*_\eta) &= \frac{1}{\unorm{\bm z^*_\eta}} \biggl( \bm I_n - \frac{\bm z^*_\eta (\bm z^*_\eta)^\topnew}{\unorm{\bm z^*_\eta}^2} \biggr) = \frac{\bm I_n - \bm x^* (\bm x^*)^\topnew}{{1-\eta \gamma}} , \\
    c_1(\bm z^*_\eta) &= \infty , \quad c_2(\bm z^*_\eta) = \frac{2}{\unorm{\bm z^*_\eta}^2} = \frac{2}{(1-\eta \gamma)^2} .
\end{align*}

\noindent \textbf{Step 4:}
Denote $\bm P_{\bm x^*}^\perp = \bm I_n - \bm x^* (\bm x^*)^\topnew$. From (\ref{equ:H}), the asymptotic linear rate is given by
\begin{align*}
    \rho &= \rho \biggl( \frac{1}{{1-\eta \gamma}} \bm P_{\bm x^*}^\perp (\bm I_n - \eta \bm A^\topnew \bm A) \bm P_{\bm x^*}^\perp \biggr) \\
    &= \frac{1}{{1-\eta \gamma}} \unorm{\bm P_{\bm x^*}^\perp (\bm I_n - \eta \bm A^\topnew \bm A) \bm P_{\bm x^*}^\perp}_2 .
\end{align*}
Let $\bm P_{\bm x^*}^\perp = \bm U_{\bm x^*} \bm U_{\bm x^*}^\topnew$, where $\bm U_{\bm x^*} \in \R^{n \times (n-1)}$ is a semi-orthogonal matrix whose columns provide a basis for the null space of $\bm x^*$.
Then, following the same derivation as in the proof of Corollary~\ref{cor:noiseless}, we obtain
\begin{align} \label{equ:rho_sphere}
    \rho &= \frac{1}{{1-\eta \gamma}} \max \{ \uabs{1 - \eta \lambda_1} , \uabs{1 - \eta \lambda_{n-1}} \} ,
\end{align}
where $\lambda_1$ and $\lambda_{n-1}$ are the largest and smallest eigenvalues of $(\bm A \bm U_{\bm x^*})^{\topnew} \bm A \bm U_{\bm x^*}$, respectively.

\noindent \textbf{Step 5:}
Since $\abs{1-\eta \lambda}/(1-\eta \gamma) < 1$ is equivalent to $\eta \gamma -1 < 1-\eta \lambda < 1 -\eta \gamma$, we have $\rho < 1$ if and only if
\begin{align} \label{equ:sphere_local}
    \gamma < \lambda_{n-1}
\end{align}
and
\begin{align*}
     \eta \Bigl(\gamma + \lambda_1 \Bigr) < 2 . \numberthis \label{equ:eta_sphere2}
\end{align*}
Similar to (\ref{equ:sphere_eta0}), the inequality in (\ref{equ:eta_sphere2}) can be rewritten as
\begin{align*}
    \begin{cases}
        \eta \in (0,\infty) &\text{if } \gamma \leq -\lambda_1 ,\\
        \eta  \in (0, \frac{2}{\gamma + \lambda_1}) &\text{if } \gamma > -\lambda_1 .
    \end{cases}
\end{align*}
Finally, we note that conditions (\ref{equ:sphere_local}) and (\ref{equ:eta_sphere2}) together imply the condition $1-\eta \gamma > 0$ in Step~3 since $2 \eta \gamma < \eta (\gamma + \lambda_{n-1}) \leq  \eta (\gamma + \lambda_1) < 2$.

\noindent \textbf{Step 6:}
To determine the region of linear convergence, we first recall that $c_1(\bm x^*) = c_1(\bm z^*_\eta) = \infty$.
Second, we have
\begin{align*}
    \unorm{\nabla \P_\C (\bm z^*_\eta)}_2 = \norm{\frac{\bm I_n - \bm x^* (\bm x^*)^\topnew}{1-\eta \gamma}}_2 = \frac{1}{1-\eta \gamma} .
\end{align*}
Third, since $\bm H = \bm P_{\bm x^*}^\perp (\bm I_n - \eta \bm A^\topnew \bm A) \bm P_{\bm x^*}^\perp / (1-\eta \gamma)$ is symmetric, one can choose $\bm Q$ in the eigendecomposition $\bm H = \bm Q \bm \Lambda \bm Q^{-1}$ to be orthogonal, with $\kappa(\bm Q)=1$. 
Thus, from (\ref{cond:roc}), we obtain the region of linear convergence as
\begin{align*}
    \unorm{\bm x - \bm x^*} \leq \frac{1-\rho}{2(t^2 + t)} , \numberthis \label{equ:roc_sphere}
\end{align*}
where $t = \unorm{\bm I_n - \eta \bm A^\topnew \bm A}_2 / (1-\eta \gamma)$.

\begin{remark}
The local linear rate in (\ref{equ:rho_sphere}) matches the rate provided by Theorem~1 in \cite{vu2019convergence}. 
Compared to the setting in \cite{vu2019convergence}, here we consider a special case of the quadratic that is convex (and hence, $\lambda_d \geq 0$).
By minimizing the rate in (\ref{equ:rho_sphere}) over $\eta$, we also obtain the same optimal rate of linear convergence given by Lemma~5 in \cite{vu2019convergence}:
\begin{align*}
    \rho_{opt} = \frac{\lambda_1 - \lambda_{n-1}}{\lambda_1 + \lambda_{n-1} - 2\gamma} \text{ with } \eta_{opt} = \frac{2}{\lambda_1 + \lambda_{n-1}} .
\end{align*}
Interestingly, condition (\ref{equ:sphere_local}) implies $\bm x^*$ is a strict local minimum of (\ref{prob:sphere}) (see Lemma~2 in \cite{vu2019convergence}).
Since $\rho<1$ is one of the conditions in Theorem~\ref{theo:pgd}, our analysis requires $\bm x^*$ to be a strict local minimum of (\ref{prob:sphere}) in order to obtain linear convergence.
Finally, our framework provides the region of linear convergence in (\ref{equ:roc_sphere}), which is not given in \cite{vu2019convergence}.
\end{remark}

\subsection{Matrix Completion}\label{subsec:MCP}

\subsubsection{Background}

The last application is an application of our framework to the matrix case. In matrix completion \cite{candes2009exact}, given a rank-$r$ matrix $\bm M \in \R^{m \times n}$ (for $1 \leq r \leq \min\{m,n\}$) with a set of its observed entries indexed by $\Omega$, of cardinality $0<s<mn$, we wish to recover the unknown entries of $\bm M$ in the complement set $\bar{\Omega}$ by solving the following optimization:
\begin{align} \label{prob:mcp}
    \boxed{\min_{\bm X \in \R^{m \times n}} \frac{1}{2} \unorm{\P_{\Omega} (\bm X - \bm M)}_F^2 \text{ s.t. } \rank(\bm X) \leq r ,}
\end{align}
where $\P_{\Omega}: \R^{m \times n} \to \R^{m \times n}$ is the orthogonal projection onto the set of $m \times n$ matrices supported in $\Omega$, i.e.,
\begin{align*}
    [\P_\Omega (\bm X)]_{ij} = \begin{cases}
        X_{ij} &\text{ if } (i,j) \in \Omega , \\
        0 &\text{ if } (i,j) \not \in \Omega .
    \end{cases}
\end{align*}
It is noted that while $\bm M$ is unknown, the projection $\P_{\Omega}(\bm M)$ is unambiguously determined by the observed entries in $\bm M$. 
In the literature, the PGD algorithm for solving (\ref{prob:mcp}) is also known as the Singular Value Projection (SVP) algorithm for matrix completion \cite{jain2010guaranteed,chen2015fast,jain2015fast,ding2020leave}, with the update 
\begin{align*}
    \bm X^{(k+1)}= \P_{\mathcal{M}_{\leq r}} \bigl(\bm X^{(k)} - \eta \P_\Omega (\bm X^{(k)} - \bm M) \bigr) .
\end{align*}
Here, $\mathcal{M}_{\leq r}$ is the set of matrices of rank at most $r$, i.e.,
\begin{align*}
    \mathcal{M}_{\leq r} = \{ \bm X \in \R^{m \times n} \mid \rank(\bm X) \leq r \} .
\end{align*}
In addition, the orthogonal projection $\P_{\mathcal{M}_{\leq r}}: \R^{m \times n} \to \mathcal{M}_{\leq r}$ is defined by Eckart–Young–Mirsky theorem \cite{eckart1936approximation} as follows.
Let $\text{SVD}(\bm X)$ be the set of all triples $(\bm \Sigma, \bm U, \bm V)$ such that $\bm X = \bm U \bm \Sigma \bm V^\topnew$ and
\begin{align*}
    \begin{cases}
        \bm \Sigma = \diag(\sigma_1(\bm X),\ldots,\sigma_n(\bm X)) , \\
        \bm U \in \R^{m \times n}, \bm V \in \R^{n \times n}: \bm U^\topnew \bm U = \bm V^\topnew \bm V = \bm I_n .
    \end{cases}
\end{align*}
Denote $\bm u_i(\bm X)$ and $\bm v_i(\bm X)$ the $i$th columns of $\bm U$ and $\bm V$, respectively. Then, the set of all projections of $\bm X$ onto $\mathcal{M}_{\leq r}$ is given by
\begin{align*}
    \Pi_{\mathcal{M}_{\leq r}} (\bm X) = \Bigl\{ \sum_{i=1}^r &\sigma_i(\bm X) \bm u_i(\bm X) \bm v_i(\bm X)^\topnew \\
    &\mid (\bm \Sigma, \bm U, \bm V) \in \text{SVD}(\bm X) \Bigr\} . \numberthis  \label{equ:mcp_P}
\end{align*}
The set $\Pi_{\mathcal{M}_{\leq r}} (\bm X)$ is singleton if and only if $\sigma_r(\bm X)=0$ or $\sigma_r(\bm X) > \sigma_{r+1}(\bm X)$. 
In the case $\Pi_{\mathcal{M}_{\leq r}} (\bm X)$ has multiple elements, we define $\P_{\mathcal{M}_{\leq r}} (\bm X)$ as the greatest element in $\Pi_{\mathcal{M}_{\leq r}} (\bm X)$ based on the lexicographical order.
We re-emphasize that our subsequent analysis holds independently of this choice.

In differential geometry, it is well-known that $\mathcal{M}_{\leq r}$ is a closed set of $\R^{m \times n}$ but non-smooth in those points of rank strictly less than $r$ \cite{lee2003introduction}.
Similar to sparse recovery, the smooth part of $\mathcal{M}_{\leq r}$ is the set of matrices of fixed rank $r$:
\begin{align*}
    \mathcal{M}_{= r} = \{ \bm X \in \R^{m \times n} \mid \rank(\bm X) = r \} .
\end{align*}
At any $\bm X^* \in \mathcal{M}_{= r}$, it is shown \cite{vu2021perturbation} that derivative of $\P_{\mathcal{M}_{\leq r}}$ is a linear mapping from $\R^{m \times n}$ to $\R^{m \times n}$ satisfying
\begin{align} \label{equ:dP_rank}
    \nabla \P_{\mathcal{M}_{\leq r}} (\bm X^*) (\bm \Delta) = \bm \Delta - \bm P_{\bm U_\perp} \bm \Delta \bm P_{\bm V_\perp} ,
\end{align}
where $\bm P_{\bm U_\perp}$ and $\bm P_{\bm V_\perp}$ are the projections onto the left and right null spaces of $\bm X^*$, respectively. 
More importantly, for any $\bm \Delta \in \R^{m \times n}$, Theorem~3 in \cite{vu2021perturbation} asserts that
\begin{align*}
    \sup_{\bm Y \in \Pi_{\mathcal{M}_{\leq r}} (\bm X^* + \bm \Delta)} &\unorm{\bm Y - \bm X^* - \nabla \P_{\mathcal{M}_{\leq r}} (\bm X^*) (\bm \Delta)}_F \\
    &\qquad \qquad \qquad \leq \frac{4(1+\sqrt{2})}{\sigma_r(\bm X^*)} \unorm{\bm \Delta}_F^2 . \numberthis \label{equ:perturb}
\end{align*}

\subsubsection{Vectorized version of matrix completion}

To apply our proposed framework to matrix completion, we consider a vectorized version of (\ref{prob:mcp}) as follows.
Slightly extending the notation, we denote $\C = \{ \vect(\bm X) \mid \bm X \in \mathcal{M}_{\leq r} \}$ and $\vect(\Omega) = \{ (j-1)m+i \mid (i,j) \in \Omega \}$ with $s$ distinct elements $1 \leq i_1 < \ldots < i_s \leq mn$.
Let $\bm S_{\Omega} = [\bm e_{i_1}, \ldots, \bm e_{i_s}] \in \R^{mn \times s}$ be the selection matrix satisfying
\begin{align*}
\begin{cases}
    \bm S_{\Omega}^{\topnew} \bm S_{\Omega} = \bm I_{s} , \\
    \vect\bigl( \P_{{\Omega}} (\bm X) \bigr) = \bm S_{\Omega} \bm S_{\Omega}^{\topnew} \vect(\bm X) .
\end{cases}
\end{align*}
Then, problem (\ref{prob:mcp}) can be represented as
\begin{align*}
    \min_{\bm x \in \R^{mn}} \frac{1}{2} \unorm{\bm S_{\Omega} \bm S_{\Omega}^\topnew \bm x - \bm S_{\Omega} \bm S_{\Omega}^\topnew \vect(\bm M)}^2 \text{ s.t. } \bm x \in \C .
\end{align*}

\noindent \textbf{Step 1:} 
In this vectorized version of matrix completion, we have $\bm A = \bm S_{\Omega} \bm S_{\Omega}^\topnew$, $\bm b = \bm S_{\Omega} \bm S_{\Omega}^\topnew \vect(\bm M)$, and $\C$ is a closed non-convex set. 
For any vector $\bm x \in \R^{mn}$, let $\bm X = \vect^{-1}(\bm x)$ with $\P_{\mathcal{M}_{\leq r}} (\bm X) = \sum_{i=1}^r \sigma_i(\bm X) \bm u_i(\bm X) \bm v_i(\bm X)^\topnew$, for some $(\bm \Sigma, \bm U, \bm V) \in \text{SVD}(\bm X)$. 
The projection $\P_\C$ is given by $\P_\C (\bm x) = \vect ( \sum_{i=1}^r \sigma_i(\bm X) \bm u_i(\bm X) \bm v_i(\bm X)^\topnew )$.
Using the fact that $\vect(\bm u \bm v^\topnew) = \bm v \otimes \bm u$, for any vectors $\bm u$ and $\bm v$ of compatible dimensions, $\P_\C$ can then be represented as 
\begin{align} \label{equ:mcp_Pc_vec}
   \P_\C (\bm x) = \sum_{i=1}^r \sigma_i(\bm X) \bigl( \bm v_i(\bm X) \otimes \bm u_i(\bm X) \bigr) .
\end{align}

\noindent \textbf{Step 2:}
In the following, we show that $\P_\C$ is Lipschitz-continuously differentiable at any point in the set
\begin{align*}
    \C_{=r} = \{ \vect(\bm X) \mid \bm X \in \mathcal{M}_{= r} \} .
\end{align*}
In particular, for any $\bm x^* \in \C_{=r}$, we prove that $\P_\C$ is Lipschitz-continuously differentiable at $\bm x^*$ with
\begin{align*}
    &\nabla \P_\C (\bm x^*) = \bm P_{\bm U_\perp \bm V_\perp}^\perp , \\
    &c_1(\bm x^*)=\infty , \quad c_2(\bm x^*)=\frac{4(1+\sqrt{2})}{\sigma_r(\bm X^*)} ,
\end{align*}
where $\bm X^*=\vect^{-1}(\bm x^*)$.
Indeed, the constants $c_1(\bm x^*)$ and $c_2(\bm x^*)$ are obtained from the matrix inequality form (\ref{equ:perturb}).
Regarding $\nabla \P_\C (\bm x^*)$, let $\bm P_{\bm U_\perp}$ and $\bm P_{\bm V_\perp}$ be the projections onto the left and right null spaces of $\bm X^*$, respectively.  
Denote $\bm P_{\bm U_\perp \bm V_\perp} = \bm P_{\bm V_\perp} \otimes \bm P_{\bm U_\perp}$ and $\bm P_{\bm U_\perp \bm V_\perp}^\perp = \bm I_{mn} - \bm P_{\bm U_\perp \bm V_\perp}$.
Since $\vect (\bm A \bm B \bm C) = (\bm C^\topnew \otimes \bm A) \vect(\bm B)$, for any matrices $\bm A$, $\bm B$, and $\bm C$ of compatible dimensions, (\ref{equ:dP_rank}) can be vectorized to obtain $\nabla \P_\C (\bm x^*) (\bm \delta) = (\bm I_{mn} - \bm P_{\bm V_\perp} \otimes \bm P_{\bm U_\perp}) \bm \delta = \bm P_{\bm U_\perp \bm V_\perp}^\perp \bm \delta$ for any $\bm \delta \in \R^{mn}$.

Next, the stationarity condition (\ref{equ:stationary}) can be represented using $\nabla \P_\C (\bm x^*) (\bm \delta) = \bm P_{\bm U_\perp \bm V_\perp}^\perp \bm \delta$ as $\bm P_{\bm U_\perp \bm V_\perp}^\perp \bm S_{\Omega} \bm S_{\Omega}^{\topnew} \bigl ( \bm S_{\Omega} \bm S_{\Omega}^{\topnew} \bm x^* - \bm S_{\Omega} \bm S_{\Omega}^{\topnew} \vect (\bm M) \bigr) = \bm 0$.
Denote $\bm Q_{\perp} \in \R^{mn \times r(m+n-r)}$ the matrix satisfying $\bm Q_{\perp}^\topnew \bm Q_{\perp} = \bm I_{r(m+n-r)}$ and $\bm Q_{\perp} \bm Q_{\perp}^\topnew = \bm P_{\bm U_\perp \bm V_\perp}^\perp$. Then, we obtain the conditions for $\bm x^*$ to be a Lipschitz stationary point of (\ref{prob:mcp}) are $\bm x^* \in \C_{=r}$ and
\begin{align} \label{equ:QS}
    \bm Q_{\perp}^\topnew \bm S_{\Omega} \bm S_{\Omega}^{\topnew} \bigl( \bm x^* - \vect (\bm M) \bigr) = \bm 0 .
\end{align}

\noindent \textbf{Step 3:}
The stationarity condition (\ref{equ:QS}) leads to two cases. The first case is when $\bm Q_{\perp}^\topnew \bm S_{\Omega}$ has full (row-)rank and hence,
\begin{align} \label{equ:mcp_restrict}
    \bm S_{\Omega}^{\topnew} \bigl( \bm x^* - \vect (\bm M) \bigr) = \bm 0 .
\end{align}
In matrix form, (\ref{equ:mcp_restrict}) can be rewritten as $\P_\Omega(\bm X^*) = \P_\Omega(\bm M)$, which implies $\bm X^*$ is a global minimizer of (\ref{prob:mcp}).
Interestingly, this case enjoys the special setting considered in Corollary~\ref{cor:noiseless} as
\begin{align} \label{equ:mcp_zx}
    \bm z^*_\eta = \bm x^* - \eta \bm S_{\Omega} \bm S_{\Omega}^{\topnew} \bigl(\bm x^* - \vect(\bm M) \bigr) = \bm x^* ,
\end{align}
for any $\eta>0$. 
In the second case, if $\bm Q_{\perp}^\topnew \bm S_{\Omega}$ has rank strictly less than $r(m+n-r)$, then $\bm S_{\Omega}^{\topnew} \bigl( \bm x^* - \vect (\bm M) \bigr)$ may not be $\bm 0$ (e.g., a non-zero right singular vector of $\bm Q_{\perp}^\topnew \bm S_{\Omega}$). 
This implies $\bm z^*_\eta \neq \bm x^*$ and one needs to characterize the Lipschitz-continuous differentiability of the projection $\P_\C$ onto the set of low-rank matrices at $\bm z^*_\eta$ that may not have exact rank $r$. 
While the derivative of $\P_\C$ at a matrix with rank greater than $r$ has been studied in \cite{feppon2018geometric,vu2021perturbation}, it requires complete development of the error bound on the first-order expansion of this operator to obtain the constants $c_1(\bm z^*_\eta)$ and $c_2(\bm z^*_\eta)$.
For the purpose of demonstration, we restrict our subsequent analysis to the first case when $\bm Q_{\perp}^\topnew \bm S_{\Omega}$ has full (row-)rank.
Since $\bm z^*_\eta = \bm x^*$ in this case, $\P_\C$ is Lipschitz-continuously differentiable at $\bm z^*_\eta$ with
\begin{align*}
    \nabla \P_\C (\bm z^*_\eta) = \bm P_{\bm U_\perp \bm V_\perp}^\perp ,~c_1(\bm z^*_\eta) = \infty ,~ c_2(\bm z^*_\eta)=\frac{4(1+\sqrt{2})}{\sigma_r(\bm X^*)} .
\end{align*}

\noindent \textbf{Step 4:} 
Since $\nabla \P_{\C} (\bm z^*_\eta) = \nabla \P_\C (\bm x^*) = \bm P_{\bm U_\perp \bm V_\perp}^\perp$, using (\ref{equ:rho_noiseless}), we obtain the asymptotic linear rate as 
\begin{align} \label{equ:rho_MCP_1}
    \rho = \max \{ \uabs{1 - \eta \lambda_1} , \uabs{1 - \eta \lambda_{r(m+n-r)}} \} ,
\end{align}
where $\lambda_1$ and $\lambda_{r(m+n-r)}$ are the largest and smallest eigenvalues of $\bm Q_{\perp}^\topnew \bm S_\Omega \bm S_{\Omega}^\topnew \bm Q_{\perp}$, respectively.

\noindent \textbf{Step 5:} 
From (\ref{equ:rho_MCP_1}), we have $\rho<1$ if and only if $\bm Q_{\perp}^\topnew \bm S_\Omega \bm S_{\Omega}^\topnew \bm Q_{\perp}$ has full rank and 
\begin{align*}
    0 < \eta < \frac{2}{\unorm{\bm Q_{\perp}^\topnew \bm S_\Omega \bm S_{\Omega}^\topnew \bm Q_{\perp}}_2} .
\end{align*}
Here, we would like to point out the condition $\bm Q_{\perp}^\topnew \bm S_\Omega \bm S_{\Omega}^\topnew \bm Q_{\perp}$ has full rank implies $s \geq r(m+n-r)$, which can be interpreted as a requirement for the number of observations being no less than the degree of freedom in matrix completion. The invertibility of $\bm Q_{\perp}^\topnew \bm S_\Omega \bm S_{\Omega}^\topnew \bm Q_{\perp}$ is also equivalent to the injectivity of the sampling operator restricted to the tangent space $T$ to $\mathcal{M}_{\leq r}$ at $\bm X^*$, denoted by $\mathcal{A}_{\Omega T}$ in \cite{candes2009exact}-Section~4.2. It is interesting to note that under the standard assumptions on uniform sampling and incoherence property, Cand{\`e}s and Recht \cite{candes2009exact} showed that $\mathcal{A}_{\Omega T}$ is injective with high probability.

\noindent \textbf{Step 6:} 
Recall that $c_1(\bm x^*) = c_1(\bm z^*_\eta) = \infty$. Since $\nabla \P_{\C} (\bm z^*_\eta) = \nabla \P_\C (\bm x^*) = \bm P_{\bm U_\perp \bm V_\perp}^\perp$, using (\ref{cond:roc_sym}), the region of linear convergence is given by
\begin{align} \label{equ:roc_MCP}
    \unorm{\bm x - \bm x^*} \leq \frac{(1 - \rho)\sigma_r(\bm X^*)}{8(1+\sqrt{2})} .
\end{align}

\begin{figure}[t]
    \centering
    \includegraphics[scale=.61]{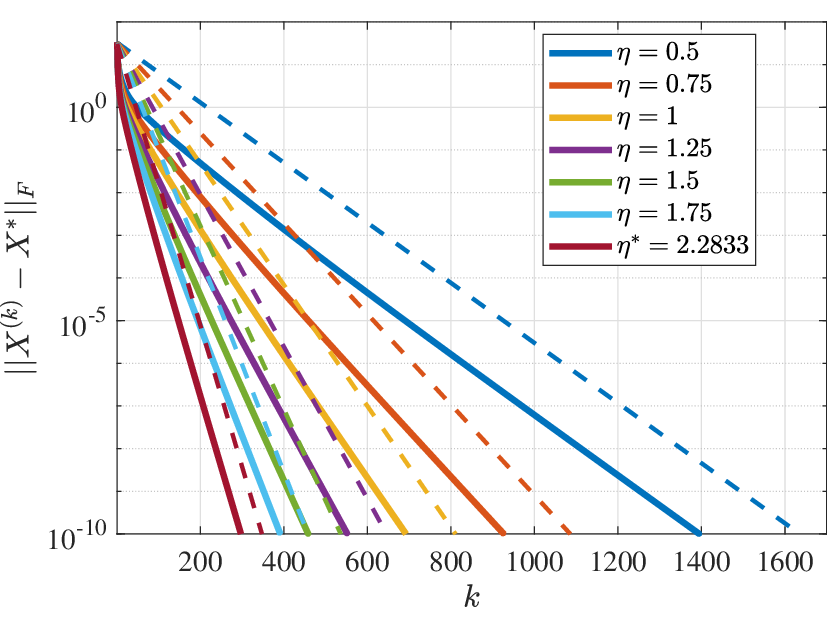}
    \caption{(Log-scale) plot of the distance between the current iterate and the local minimizer of the matrix completion problem, as a function of the number of iterations. Each solid line corresponds to PGD with a different fixed step size. Each dashed line represents the respective exponential bound $\rho^k$ up to a constant, where the theoretical rate $\rho$ is given by (\ref{equ:rho_MCP_1}). In the experiment, we select $m=50, n=40, r=3$, and $s=800$. The optimal step size $\eta_{opt}=2.2833$ is given by (\ref{equ:mcp_opt}), with the corresponding optimal rate $\rho_{opt}=0.9265$.}
    \label{fig:mcp}
\end{figure}

\begin{remark}
Similar to (\ref{equ:CLS_eta_opt}) and (\ref{equ:CLS_rho_opt}), the optimal step size and the optimal convergence rate are given by
\begin{align*}
    \eta_{opt} &= \frac{2}{\lambda_{1}(\bm Q_{\perp}^\topnew \bm S_\Omega \bm S_{\Omega}^\topnew \bm Q_{\perp})+\lambda_{r(m+n-r)}(\bm Q_{\perp}^\topnew \bm S_\Omega \bm S_{\Omega}^\topnew \bm Q_{\perp})} , \\
    \rho_{opt} &= 1 - \frac{2}{\kappa(\bm Q_{\perp}^\topnew \bm S_\Omega \bm S_{\Omega}^\topnew \bm Q_{\perp}) + 1} . \numberthis \label{equ:mcp_opt}
\end{align*}
We consider the following numerical experiment to verify the analytical rate in (\ref{equ:rho_MCP_1}). The data is generated randomly as follows. First, we sample two matrices $\bm A$ and $\bm B$ with $i.i.d$ normally distributed entries, of dimensions $50 \times 3$ and $40 \times 3$, respectively. Next, we obtain the rank-$3$ matrix of dimension $50 \times 40$ as the product $\bm X^* = \bm A \bm B^\topnew$. Third, we select $800$ observations uniformly at random among the $2000$ positions in $\bm X^*$. 
We apply PGD with different step sizes (listed in Fig.~\ref{fig:mcp}) including $\eta_{opt}$ in (\ref{equ:mcp_opt}) and record the value of $\norm{\bm X^{(k)} - \bm X^*}_F$ as a function of $k$.
It can be seen from Fig.~\ref{fig:mcp} that the theoretical rate matches well the empirical rate, reassuring the correctness of our analysis in the previous section. 
\end{remark}

\begin{remark}
The rate in (\ref{equ:rho_MCP_1}) has not been proposed in the literature. However, in the special case of using unit step size, it matches the rate established for the IHTSVD algorithm in \cite{chunikhina2014performance}. In their work, the authors provide the result relative to the matrix $(\bm S_{\bar{\Omega}})^\topnew \bm P_{\bm U_\perp \bm V_\perp} \bm S_{\bar{\Omega}}$ instead of $\bm Q_{\perp}^\topnew \bm S_\Omega \bm S_{\Omega}^\topnew \bm Q_{\perp}$, where $\bm S_{\bar{\Omega}} \in \R^{mn \times (mn-s)}$ is the selection matrix that is complement to $\bm S_\Omega$. 
It can be shown that the two matrices share the same set of eigenvalues while may only differ by the eigenvalues at $1$. 
Since IHTSVD uses $\eta=1$, these unit eigenvalues do not affect the maximization in (\ref{equ:rho_MCP_1}). Compared to the local convergence result in \cite{chunikhina2014performance}, our application in this subsection not only considers PGD with different step sizes but also includes the region of linear convergence in (\ref{equ:roc_MCP}).
\end{remark}

\section{Conclusion and Future Work}
\label{sec:conclusion}

We presented a unified framework to analyze the local convergence of projected gradient descent for constrained least squares. Our analysis provides the asymptotic rate of convergence in a closed-form expression, the number of iterations required to reach certain accuracy, and the local region of convergence.
Notably, our technique relies on the Lipschitz-continuous differentiability of the projection operator at two key points: $\bm x^*$ and $\bm z^*_\eta$.
Finally, we demonstrated the application of our proposed framework to local convergence analysis of PGD in four well-known problems: linear equality-constrained least squares, sparse recovery, least squares with a unit norm constraint, and matrix completion.

While the work here focuses on the specific setting of linear converges of the PGD algorithm, we believe it can be expanded in several directions. First, our framework can be utilized to analyze the following cases: {\em(i)} adaptive step size schemes (e.g., the backtracking line search rule), {\em(ii)} accelerated methods (e.g., the Nesterov's accelerated gradient and the Heavy Ball method), {\em(iii)} general objective functions other than least squares, and {\em(iv)} other algorithms for manifold optimization such as Riemannian gradient descent. 
Another interesting research direction is to sharpen the theoretical bound on the ROC in order to better explain the actual region in which the algorithm converges to the desired solution.
Finally, the proposed framework can be used to further study the performance of PGD for a variety of constrained least squares problems arising in the area of phase-only beamforming \cite{tranter2017fast}, online power system optimization \cite{hauswirth2016projected}, spectral compressed sensing \cite{cai2018spectral}, and linear dimensionality reduction \cite{cunningham2015linear}.

\appendices

\section{Proof of Lemma~\ref{lem:fixed}}
\label{appdx:fixed}

Our goal in the proof of Lemma~\ref{lem:fixed} is to show that if the fixed point condition $\bm x^* = \P_{\C} (\bm x^* - \eta \bm A^\topnew ( \bm A \bm x^* - \bm b ))$ holds, then the stationarity condition $\nabla \P_\C (\bm x^*) \bm A^\topnew\bigl( \bm A \bm x^* - \bm b \bigr) = \bm 0$ holds.
Note that if $\bm A^\topnew ( \bm A \bm x^* - \bm b ) = \bm 0$, then the stationarity condition holds trivially. Hence, we focus on the proof for $\bm A^\topnew ( \bm A \bm x^* - \bm b ) \neq \bm 0$.
We first show that for any $0 \leq \alpha < 1$, $\bm x^* = \P_{\C} (\bm x^* - \eta \bm A^\topnew ( \bm A \bm x^* - \bm b ))$ is a sufficient condition for
\begin{align} \label{equ:alpha}
    \Pi_\C \bigl(\bm x^* - \alpha \eta \bm A^\topnew ( \bm A \bm x^* - \bm b ) \bigr) = \{\bm x^*\} .
\end{align}
Then, using (\ref{equ:alpha}) and the differentiability of $\P_\C$ at $\bm x^*$, we prove that $\nabla \P_\C (\bm x^*) \bm A^\topnew\bigl( \bm A \bm x^* - \bm b \bigr) = \bm 0$. We proceed with the detailed proof.

First, let $\bm v^*=\bm A^\topnew ( \bm A \bm x^* - \bm b )$ and $\bm z^*_{\alpha \eta} = \bm x^* - \alpha \eta \bm v^*$.
On the one hand, for any $0 \leq \alpha < 1$ and $\bm y \in \Pi_\C(\bm z^*_{\alpha \eta})$, we have
\begin{align*}
    \unorm{\bm y - \bm z^*_\eta} &= \unorm{(\bm y - \bm z^*_{\alpha \eta}) + (\bm z^*_{\alpha \eta} - \bm z^*_\eta)} \\
    &\leq \unorm{\bm y - \bm z^*_{\alpha \eta}} + \unorm{\bm z^*_{\alpha \eta} - \bm z^*_\eta} \\
    &= d(\bm z^*_{\alpha \eta}, \C)  + \unorm{\bm z^*_{\alpha \eta} - \bm z^*_\eta} \\
    &\leq \unorm{\bm x^* - \bm z^*_{\alpha \eta}} + \unorm{\bm z^*_{\alpha \eta} - \bm z^*_\eta} , \numberthis \label{equ:dyz00}
\end{align*}
where the first inequality uses the triangle inequality that holds when $\bm y - \bm z^*_{\alpha \eta} = \beta (\bm z^*_{\alpha \eta} - \bm z^*_\eta)$, for some $\beta \geq 0$.
Using the fact that $\bm z^*_{\eta} = \bm x^* - \eta \bm v^*$ and $\bm z^*_{\alpha \eta} = \bm x^* - \alpha \eta \bm v^*$, we obtain
\begin{align*}
    \unorm{\bm x^* - \bm z^*_{\alpha \eta}} + \unorm{\bm z^*_{\alpha \eta} - \bm z^*_\eta} &= \unorm{\alpha \eta \bm v^*} + \unorm{(1-\alpha) \eta \bm v^*} \\
    &= \unorm{\alpha \eta \bm v^* + (1-\alpha) \eta \bm v^*} \\
    &= \unorm{ \eta \bm v^*} \\
    &= \unorm{\bm x^* - \bm z^*_\eta} . \numberthis \label{equ:dyz01}
\end{align*}
From (\ref{equ:dyz00}) and (\ref{equ:dyz01}), we have
\begin{align} \label{equ:dyz}
    \unorm{\bm y - \bm z^*_\eta} \leq \unorm{\bm x^* - \bm z^*_\eta} ,
\end{align}
with the equality holding if and only if
\begin{align} \label{equ:dyz_equal}
    \begin{cases}
        \bm y - \bm z^*_{\alpha \eta} = \beta (\bm z^*_{\alpha \eta} - \bm z^*_\eta) ,  \\
        \unorm{\bm y - \bm z^*_{\alpha \eta}} = \unorm{\bm x^* - \bm z^*_{\alpha \eta}} .
    \end{cases} 
\end{align}
Using the fact that $\bm z^*_{\eta} = \bm x^* - \eta \bm v^*$ and $\bm z^*_{\alpha \eta} = \bm x^* - \alpha \eta \bm v^*$, (\ref{equ:dyz_equal}) holds if and only if
\begin{align} \label{equ:dyz_equal2}
    \beta = \frac{\alpha}{1-\alpha} \text{ and } \bm y = \bm x^* .
\end{align}
On the other hand, since $\bm x^* = \P_\C(\bm z^*_\eta)$ and $\bm y \in \C$, we have 
\begin{align} \label{equ:dyz2}
    \unorm{\bm y - \bm z^*_\eta} \geq \unorm{\bm x^* - \bm z^*_\eta} .
\end{align}
From (\ref{equ:dyz}) and (\ref{equ:dyz2}), we conclude that $\unorm{\bm y - \bm z^*_\eta} = \unorm{\bm x^* - \bm z^*_\eta}$.
Moreover, from (\ref{equ:dyz_equal2}), the equality holds if and only if $\bm y = \bm x^*$.
Since this holds for any $\bm y \in \Pi_\C(\bm z^*_{\alpha \eta})$, we conclude that $\Pi_\C(\bm z^*_{\alpha \eta}) = \{ \bm x^* \}$ for all $0 \leq \alpha < 1$. 

Next, using the differentiability of the projection $\P_\C$ at $\bm x^*$ from Definition~\ref{def:Pc_diff}, we have
\begin{align*}
    \lim_{\alpha \to 0} \sup_{\bm y \in \Pi_\C(\bm x^* - \alpha \eta \bm v^*)} \frac{\norm{\bm y - \P_\C(\bm x^*) - \nabla \P_\C(\bm x^*) (\alpha \eta \bm v^*)}}{\norm{\alpha \eta \bm v^*}} = 0 .
\end{align*}
Substituting $\Pi_\C(\bm x^* - \alpha \eta \bm v^*) = \Pi_\C(\bm z^*_{\alpha \eta}) = \{ \bm x^* \}$ and $\P_\C(\bm x^*) = \bm x^*$ into the last equation, we obtain
\begin{align*}
    0 &= \lim_{\alpha \to 0} \frac{\norm{\bm x^* - \bm x^* - \alpha \eta \nabla \P_\C(\bm x^*) \bm v^*}}{\norm{\alpha \eta \bm v^*}} \\
    &= \lim_{\alpha \to 0} \frac{\alpha \eta \norm{\nabla \P_\C(\bm x^*) \bm v^*}}{\alpha \eta \norm{\bm v^*}} \\
    &= \frac{\norm{\nabla \P_\C(\bm x^*) \bm v^*}}{\norm{\bm v^*}} ,
\end{align*}
which only holds if $\nabla \P_\C(\bm x^*) \bm v^* = \bm 0$. 
This completes our proof of the lemma.

\section{Proof of Corollary~\ref{cor:noiseless}}
\label{appdx:noiseless}

In the following, under the assumption $\nabla \P_{\C} (\bm z^*_\eta) = \nabla \P_\C (\bm x^*) = \bm U_{\bm x^*} \bm U_{\bm x^*}^\topnew$, we show that {\em(i)} the asymptotic convergence rate $\rho(\bm H)$ is given by (\ref{equ:rho_noiseless}), {\em(ii)} the sufficient conditions for $\rho(\bm H)<1$ are $(\bm A \bm U_{\bm x^*})^\topnew \bm A \bm U_{\bm x^*}$ is full rank and (\ref{equ:eta2}) holds, and {\em(iii)} the region of linear convergence can be simplified from (\ref{cond:roc}) to (\ref{cond:roc_sym}).

First, we prove (\ref{equ:rho_noiseless}) by simplifying the expression of $\bm H$ in (\ref{equ:PZT}) and the fact that $\bm U_{\bm x^*}^\topnew \bm U_{\bm x^*} = \bm I_d$ as follows. Substituting $\nabla \P_\C (\bm x^*)$ and $\nabla \P_\C (\bm z^*_\eta)$ by $\bm U_{\bm x^*} \bm U_{\bm x^*}^\topnew$ into (\ref{equ:H}) yields
\begin{align*}
    \bm H &= \bm U_{\bm x^*} \bm U_{\bm x^*}^\topnew (\bm I_n - \eta \bm A^\topnew \bm A) \bm U_{\bm x^*} \bm U_{\bm x^*}^\topnew \\
    &= \bm U_{\bm x^*} ( \bm I_d - \eta \bm U_{\bm x^*}^\topnew \bm A^\topnew \bm A \bm U_{\bm x^*} ) \bm U_{\bm x^*}^\topnew ,
\end{align*}
where the second equality stems from $\bm U_{\bm x^*}^\topnew \bm U_{\bm x^*} = \bm I_d$. Since $\bm H$ is symmetric, its spectral radius equals to its spectral norm:
\begin{align*}
    \rho(\bm H) &= \unorm{\bm U_{\bm x^*} ( \bm I_d - \eta \bm U_{\bm x^*}^\topnew \bm A^\topnew \bm A \bm U_{\bm x^*} ) \bm U_{\bm x^*}^\topnew}_2 .
\end{align*}
Using the fact that the spectral norm is invariant under left-multiplication by matrices with orthonormal columns and right-multiplication by matrices with orthonormal rows (see \cite{meyer2000matrix} - Exercise~5.6.9), we further have
\begin{align} \label{equ:H_eigen}
    \rho(\bm H) &= \unorm{\bm I_d - \eta \bm U_{\bm x^*}^\topnew \bm A^\topnew \bm A \bm U_{\bm x^*}}_2 .
\end{align}
Let $\bm U_{\bm x^*}^\topnew \bm A^\topnew \bm A \bm U_{\bm x^*} = \hat{\bm U} \hat{\bm \Lambda} \hat{\bm U}^\topnew$ be an eigendecomposition, where $\hat{\bm U} \in \R^{d \times d}$ is an orthogonal matrix and $\hat{\bm \Lambda} = \diag(\lambda_1(\bm U_{\bm x^*}^\topnew \bm A^\topnew \bm A \bm U_{\bm x^*}), \ldots, \lambda_d(\bm U_{\bm x^*}^\topnew \bm A^\topnew \bm A \bm U_{\bm x^*}))$.
Since $\hat{\bm U}^\topnew \hat{\bm U} = \bm I_d$, (\ref{equ:H_eigen}) can be represented as
\begin{align*}
    \rho(\bm H) &= \norm{\hat{\bm U} ( \bm I_d - \eta \hat{\bm \Lambda} ) \hat{\bm U}^\topnew}_2 = \norm{\bm I_d - \eta \hat{\bm \Lambda}}_2 .
\end{align*}
Now using the fact that the spectral norm of a diagonal matrix is the maximum of the absolute values of its diagonal entries, we obtain
\begin{align*}
    \rho(\bm H) &= \max_{1 \leq i\leq d} \uabs{1 - \eta \lambda_i(\bm U_{\bm x^*}^\topnew \bm A^\topnew \bm A \bm U_{\bm x^*})} \\
    &= \max \{ \uabs{1-\eta \lambda_1} , \uabs{1-\eta \lambda_d} \}.
\end{align*}

Second, we establish the sufficient conditions for $\rho(\bm H)<1$ by bounding each term inside the maximum in (\ref{equ:rho_noiseless}) as follows. Since $\lambda_d \geq 0$, we have
\begin{align*}
    -1 < 1 - \eta \lambda_1 \leq 1 - \eta \lambda_d \leq 1 , \text{ for } i=1,\ldots,d,
\end{align*}
if $0 < \eta < 2/\lambda_1$.
It is also noted from the definition of the spectral norm that $\norm{\bm A \bm U_{\bm x^*}}_2^2 = \lambda_1$.
Therefore, $\rho(\bm H) \leq 1$ provided that (\ref{equ:eta2}) holds.
The equality $\rho(\bm H)=1$ holds if and only if $\lambda_d=0$, i.e., $(\bm A \bm U_{\bm x^*})^\topnew \bm A \bm U_{\bm x^*}$ is singular.
In other words, when $(\bm A \bm U_{\bm x^*})^\topnew \bm A \bm U_{\bm x^*}$ is full rank and (\ref{equ:eta2}) holds, the linear convergence is guaranteed as $\rho(\bm H)<1$. 

Finally, the region of linear convergence in (\ref{cond:roc_sym}) is determined based on simplifying (\ref{cond:roc}) as follows. First, using Remark~\ref{rmk:kappa}, we obtain $\kappa(\bm Q)=1$. Second, from (\ref{equ:PZT}), we have $\unorm{\nabla \P_\C (\bm z^*_\eta)}_2 = \unorm{\P_{T_{\bm x^*}(\C)}}_2 = 1$.
Third, substituting $\kappa(\bm Q)=1$ and $\unorm{\nabla \P_\C (\bm z^*_\eta)}_2=1$ into (\ref{cond:roc}) yields (\ref{cond:roc_sym}).
This completes our proof of the corollary.

\section{Proof of Lemma~\ref{lem:delta_H}}
\label{appdx:delta_H}

Our goal is to show the error vector $\bm \delta^{(k)}$ satisfies the asymptotically-linear quadratic system dynamic in (\ref{equ:delta_H}) and to bound the norm of the residual $\bm q_2$ by (\ref{equ:q_hat}). 

First, our key idea in proving (\ref{equ:delta_H}) is the Lipschitz-continuous differentiability of $\P_\C$ at $\bm x^*$ and at $\bm z^*_\eta$.
Specifically, for any $k$ such that $\bm \delta^{(k)}$ admits a  perturbation $(\bm I - \eta \bm A^\topnew \bm A) \bm \delta^{(k)}$ that satisfies
\begin{align} \label{cond:u_eta}
    \unorm{(\bm I - \eta \bm A^\topnew \bm A) \bm \delta^{(k)}} < c_1(\bm z^*_\eta) ,
\end{align}
applying the Lipschitz-continuous differentiability of $\P_\C$ at $\bm z^*_\eta$ to (\ref{equ:delta_2}) yields
\begin{align*} 
    \bm \delta^{(k+1)} = \nabla \P_{\C} (\bm z^*_\eta) &(\bm I - \eta \bm A^\topnew \bm A) \bm \delta^{(k)} + \bm q_1(\bm \delta^{(k)}) , \numberthis \label{equ:delta}
\end{align*}
where the residual $\bm q_1: \R^n \to \R^n$ satisfies
\begin{align*}
    \unorm{\bm q_1(\bm \delta^{(k)})} &\leq c_2(\bm z^*_\eta) \unorm{(\bm I - \eta \bm A^\topnew \bm A) \bm \delta^{(k)}}^2 \\
    &\leq c_2(\bm z^*_\eta) u_\eta^2 \unorm{\bm \delta^{(k)}}^2 . \numberthis \label{equ:q1} 
\end{align*}
On the other hand, using the fact that $\bm x^* = \P_\C(\bm x^*)$, $\bm x^{(k)} = \P_\C(\bm x^{(k)})$, and the Lipschitz-continuous differentiability of $\P_\C$ at $\bm x^*$ with the perturbation $\bm \delta^{(k)} \in \B(\bm 0, c_1(\bm x^*))$, we obtain
\begin{align*}
    \bm \delta^{(k)} &= \bm x^{(k)} - \bm x^* \\
    &= \P_\C (\bm x^* + \bm \delta^{(k)}) - \P_\C (\bm x^*) \\
    &= \nabla \P_\C (\bm x^*) \bm \delta^{(k)} + \bm q_{\bm x^*}(\bm \delta^{(k)}) , \numberthis \label{equ:delta_T}
\end{align*}
where the residual $\bm q_{\bm x^*}: \R^n \to \R^n$ satisfies $\unorm{\bm q_{\bm x^*}(\bm \delta^{(k)})} \leq c_2(\bm x^*) \unorm{\bm \delta^{(k)}}^2$.
We proceed with the proof of (\ref{equ:delta_H}) by combining the results from (\ref{equ:delta}) and (\ref{equ:delta_T}) as follows.
Since $\unorm{(\bm I - \eta \bm A^\topnew \bm A) \bm \delta^{(k)}} \leq \unorm{\bm I - \eta \bm A^\topnew \bm A}_2 \unorm{\bm \delta^{(k)}} = u_\eta \unorm{\bm \delta^{(k)}}$, the sufficient condition for (\ref{cond:u_eta}) is $\unorm{\bm \delta^{(k)}} < c_1(\bm z^*_\eta)/u_\eta$. Thus, $\unorm{\bm \delta^{(k)}} < \min\{c_1(\bm x^*), c_1(\bm z^*_\eta)/u_\eta \}$ is sufficient for both (\ref{equ:delta}) and (\ref{equ:delta_T}).
Substituting (\ref{equ:delta_T}) into the RHS of (\ref{equ:delta}), we obtain (\ref{equ:delta_H}) with $\bm q_2(\bm \delta^{(k)}) = \nabla \P_{\C} (\bm z^*_\eta) (\bm I - \eta \bm A^\topnew \bm A) \bm q_{\bm x^*}(\bm \delta^{(k)}) + \bm q_1(\bm \delta^{(k)})$.
Next, to bound the norm of the residual $\bm q_2$, we apply the triangle inequality as follows
\begin{align*} 
    \unorm{\bm q_2(\bm \delta^{(k)})} \leq &\unorm{\nabla \P_{\C} (\bm z^*_\eta) (\bm I - \eta \bm A^\topnew \bm A) \bm q_{\bm x^*}(\bm \delta^{(k)})} \\
    &\qquad + \unorm{\bm q_1(\bm \delta^{(k)})} . \numberthis \label{equ:bound1}
\end{align*}
On the one hand, the first term on the RHS of (\ref{equ:bound1}) can be bounded by
\begin{align*}
    &\unorm{\nabla \P_{\C} (\bm z^*_\eta) (\bm I - \eta \bm A^\topnew \bm A) \bm q_{\bm x^*}(\bm \delta^{(k)})} \\
    &\qquad \leq \unorm{\nabla \P_{\C} (\bm z^*_\eta)}_2 \unorm{\bm I - \eta \bm A^\topnew \bm A}_2 \unorm{\bm q_{\bm x^*}(\bm \delta^{(k)})} \\
    &\qquad \leq \unorm{\nabla \P_{\C} (\bm z^*_\eta)}_2 u_\eta c_2 (\bm x^*) \unorm{\bm \delta^{(k)}}^2 . \numberthis \label{equ:bound2}
\end{align*}
On the other hand, the second term on the RHS of (\ref{equ:bound1}) can be bounded by (\ref{equ:q1}).
Combining the two bounds, we obtain (\ref{equ:q_hat}).


\section{Proof of Lemma~\ref{lem:delta_tilde}}
\label{appdx:delta_tilde}

In this section, our goal is to show the recursion on the transformed error vector (\ref{equ:delta_tilde}) holds at any $k \in \mathbb{N}$ provided that the initial error vector lies within the region of linear convergence described by (\ref{cond:roc}).
In the first step, we prove that if the current transformed error vector lies within the region of linear convergence
\begin{align} \label{cond:delta_k}
    \unorm{\tilde{\bm \delta}^{(k)}} < \min \Bigl\{ \frac{c_1(\bm x^*)}{\unorm{\bm Q}_2}, \frac{c_1(\bm z^*_\eta)}{\unorm{\bm Q}_2 u_\eta}, \frac{1-\rho(\bm H)}{q/\unorm{\bm Q^{-1}}_2} \Bigr\} .
\end{align}
then $\tilde{\bm \delta}^{(k+1)} = \bm \Lambda \tilde{\bm \delta}^{(k)} + \bm q_3 (\tilde{\bm \delta}^{(k)})$ and moreover, the next transformed error vector also lies within the region of linear convergence
\begin{align} \label{cond:delta_k1}
    \unorm{\tilde{\bm \delta}^{(k+1)}} < \min \Bigl\{ \frac{c_1(\bm x^*)}{\unorm{\bm Q}_2}, \frac{c_1(\bm z^*_\eta)}{\unorm{\bm Q}_2 u_\eta}, \frac{1-\rho(\bm H)}{q/\unorm{\bm Q^{-1}}_2} \Bigr\} .
\end{align}
Therefore, by the principle of induction, the initial condition on the transformed error vector, i.e., (\ref{cond:delta_k}) holds at $k=0$, is the sufficient condition for (\ref{equ:delta_tilde}) to hold at any $k \in \mathbb{N}$. 
In the second step, we show that (\ref{cond:delta_k}) holds at $k=0$ if the initial condition on the error vector (\ref{cond:roc}) holds and hence, completes the proof of lemma.
We proceed with our detailed proof below.

First, let us assume that (\ref{cond:delta_k}) holds.
We have
\begin{align*}
    \unorm{\bm \delta^{(k)}} &= \unorm{\bm Q \tilde{\bm \delta}^{(k)}} \leq \unorm{\bm Q}_2 \unorm{\tilde{\bm \delta}^{(k)}} \\
    &< \unorm{\bm Q}_2 \min \Bigl\{ \frac{c_1(\bm x^*)}{\unorm{\bm Q}_2}, \frac{c_1(\bm z^*_\eta)}{\unorm{\bm Q}_2 u_\eta}, \frac{1-\rho(\bm H)}{q/\unorm{\bm Q^{-1}}_2} \Bigr\} \\
    &\leq \min \Bigl\{ {c_1(\bm x^*)}, \frac{c_1(\bm z^*_\eta)}{u_\eta} \Bigr\} , \numberthis \label{equ:delta0_2}
\end{align*}
Thus, by Lemma~\ref{lem:delta_H}, we have $\bm \delta^{(k+1)} = \bm H \bm \delta^{(k)} + \bm q_2(\bm \delta^{(k})$.
Substituting $\bm H = \bm Q \bm \Lambda \bm Q^{-1}$ and multiplying both sides with $\bm Q^{-1}$ yields $\bm Q^{-1} \bm \delta^{(k+1)} = \bm \Lambda \bm Q^{-1} \bm \delta^{(k)} + \bm Q^{-1} \bm q_2 (\bm \delta^{(k)})$.
Replacing $\bm Q^{-1} \bm \delta^{(k)}$ by $\tilde{\bm \delta}^{(k)}$ and $\bm \delta^{(k)}$ by $\bm Q \tilde{\bm \delta}^{(k)}$ in the last equation, we obtain (\ref{equ:delta_tilde}), i.e., $\tilde{\bm \delta}^{(k+1)} = \bm \Lambda \tilde{\bm \delta}^{(k)} + \bm q_3 (\tilde{\bm \delta}^{(k)})$. Here, the second term $\bm q_3$ can be bounded as follows
\begin{align*}
    &\unorm{\bm q_3 \bigl(\tilde{\bm \delta}^{(k)} \bigr)} = \unorm{\bm Q^{-1} \bm q_2 (\bm Q \tilde{\bm \delta}^{(k)})} \leq \unorm{\bm Q^{-1}}_2 \unorm{\bm q_2 (\bm Q \tilde{\bm \delta}^{(k)})} \\
    &\qquad \leq \unorm{\bm Q^{-1}}_2 \bigl( c_2(\bm x^*) + \unorm{\nabla \P_{\C} (\bm z^*_\eta)}_2 c_2(\bm z^*_\eta) \bigr) \unorm{\bm Q \tilde{\bm \delta}^{(k)}}^2 \\
    &\qquad \leq \unorm{\bm Q^{-1}}_2 \bigl( c_2(\bm x^*) + \unorm{\nabla \P_{\C} (\bm z^*_\eta)}_2 c_2(\bm z^*_\eta) \bigr) \unorm{\bm Q}_2^2 \unorm{\tilde{\bm \delta}^{(k)}}^2 \\
    &\qquad = \frac{q}{\unorm{\bm Q^{-1}}_2} \unorm{\tilde{\bm \delta}^{(k)}}^2 . \numberthis \label{equ:b2}
\end{align*}
Now, taking the norms of both sides of (\ref{equ:delta_tilde}) and applying the triangle inequality yield
\begin{align*}
    \unorm{\tilde{\bm \delta}^{(k+1)}} &\leq \unorm{\bm \Lambda \tilde{\bm \delta}^{(k)}} + \unorm{\bm q_3 \bigl(\tilde{\bm \delta}^{(k)} \bigr)} \\
    &\leq \rho(\bm H) \unorm{\tilde{\bm \delta}^{(k)}} + \frac{q}{\unorm{\bm Q^{-1}}_2} \unorm{\tilde{\bm \delta}^{(k)}}^2 \\
    &< \rho(\bm H) \unorm{\tilde{\bm \delta}^{(k)}} + \bigl(1-\rho(\bm H) \bigr) \unorm{\tilde{\bm \delta}^{(k)}} \\
    &= \unorm{\tilde{\bm \delta}^{(k)}} , \numberthis \label{equ:delta_tilde_k}
\end{align*}
where the second inequality stems from $\unorm{\tilde{\bm \delta}^{(k)}} < (1-\rho(\bm H))/(q/\unorm{\bm Q^{-1}}_2)$. 
From (\ref{cond:delta_k}) and (\ref{equ:delta_tilde_k}), we conclude that (\ref{cond:delta_k1}) holds. By the principle of induction, we have (\ref{cond:delta_k}) holds for all $k \in \mathbb{N}$ provided that it holds at $k=0$, i.e.,
\begin{align} \label{cond:delta_0}
    \unorm{\tilde{\bm \delta}^{(0)}} < \min \Bigl\{ \frac{c_1(\bm x^*)}{\unorm{\bm Q}_2}, \frac{c_1(\bm z^*_\eta)}{\unorm{\bm Q}_2 u_\eta}, \frac{1-\rho(\bm H)}{q/\unorm{\bm Q^{-1}}_2} \Bigr\} .
\end{align}

Second, we prove that (\ref{cond:roc}) is sufficient for (\ref{cond:delta_0}).
Using the definition $\tilde{\bm \delta}^{(k)} = \bm Q^{-1} \bm \delta^{(k)}$, we have
\begin{align} \label{equ:delta_Q1}
    \unorm{\tilde{\bm \delta}^{(k)}} &= \unorm{\bm Q^{-1} \bm \delta^{(k)}} \leq \unorm{\bm Q^{-1}}_2 \unorm{\bm \delta^{(k)}} .
\end{align}
Upper-bounding $\unorm{\bm \delta^{(k)}}$ by the LHS of (\ref{cond:roc}) and substituting back into (\ref{equ:delta_Q1}) yield
\begin{align*}
    \unorm{\tilde{\bm \delta}^{(k)}} < \unorm{\bm Q^{-1}}_2 \min \Bigl\{ \frac{c_1(\bm x^*)}{\kappa(\bm Q)}, \frac{c_1(\bm z^*_\eta)}{\kappa(\bm Q) u_\eta}, \frac{1-\rho(\bm H)}{q} \Bigr\} .
\end{align*}
Finally, replacing $\kappa(\bm Q)$ by the product $\unorm{\bm Q}_2 \unorm{\bm Q^{-1}}_2$ and simplifying yield (\ref{cond:delta_0}).
This completes our proof of the lemma.

\section{Proof of Lemma~\ref{lem:scalar}}
\label{appdx:scalar}

In this section, we show the convergence of $\{\unorm{\tilde{\bm \delta}^{(k)}}\}_{k=0}^\infty$ using Theorem~1 in \cite{vu2021closed}. Our idea is to consider a surrogate sequence $\{a_k\}_{k=0}^\infty$ that upper-bounds $\{\unorm{\tilde{\bm \delta}^{(k)}}\}_{k=0}^\infty$:
\begin{align*}
    \begin{cases}
        a_0=\unorm{\tilde{\bm \delta}^{(0)}} , \\
        a_{k+1} = \rho(\bm H) a_k + \frac{q}{\unorm{\bm Q^{-1}}_2} a_k^2 .
    \end{cases}
\end{align*}
First, we prove by induction that 
\begin{align} \label{equ:ak}
    \unorm{\tilde{\bm \delta}^{(k)}} \leq a_k \quad \forall k \in \mathbb{N}  .
\end{align}
The base case when $k=0$ holds trivially as $a_0=\unorm{\tilde{\bm \delta}^{(0)}}$.
In the induction step, given $\unorm{\tilde{\bm \delta}^{(k)}} \leq a_k$ for some integer $k \geq 0$, we have
\begin{align*}
    \unorm{\tilde{\bm \delta}^{(k+1)}} &\leq \rho(\bm H) \unorm{\tilde{\bm \delta}^{(k)}} + \frac{q}{\unorm{\bm Q^{-1}}_2} \unorm{\tilde{\bm \delta}^{(k)}}^2 \\
    &\leq \rho a_k + \frac{q}{\unorm{\bm Q^{-1}}_2} a_k^2 \\
    &= a_{k+1} .
\end{align*}
By the principle of induction, (\ref{equ:ak}) holds for all $k \in \mathbb{N}$.
Next, applying Theorem~1 in \cite{vu2021closed}, under the condition $a_0 = \unorm{\tilde{\bm \delta}^{(0)}} < (1-\rho(\bm H))/(q/\unorm{\bm Q^{-1}}_2)$, yields $a_k \leq \tilde{\epsilon} a_0$ for any integer $k$ satisfies (\ref{equ:k_tilde}).
From (\ref{equ:ak}), we further have $\unorm{\tilde{\bm \delta}^{(k)}} \leq a_k \leq \tilde{\epsilon} a_0 = \tilde{\epsilon} \unorm{\tilde{\bm \delta}^{(0)}}$.
This completes our proof of the lemma.

\vspace{-5pt}

\section*{Acknowledgment}
The authors would like to thank Prof. Amir Beck of Tel-Aviv University for discussion on projected gradient descent.

\ifCLASSOPTIONcaptionsoff
  \newpage
\fi



%
\bibliographystyle{IEEEtran}
\bibliography{IEEEabrv,refs}

\begin{thebibliography}{10}
\providecommand{\url}[1]{#1}
\csname url@samestyle\endcsname
\providecommand{\newblock}{\relax}
\providecommand{\bibinfo}[2]{#2}
\providecommand{\BIBentrySTDinterwordspacing}{\spaceskip=0pt\relax}
\providecommand{\BIBentryALTinterwordstretchfactor}{4}
\providecommand{\BIBentryALTinterwordspacing}{\spaceskip=\fontdimen2\font plus
\BIBentryALTinterwordstretchfactor\fontdimen3\font minus
  \fontdimen4\font\relax}
\providecommand{\BIBforeignlanguage}[2]{{%
\expandafter\ifx\csname l@#1\endcsname\relax
\typeout{** WARNING: IEEEtran.bst: No hyphenation pattern has been}%
\typeout{** loaded for the language `#1'. Using the pattern for}%
\typeout{** the default language instead.}%
\else
\language=\csname l@#1\endcsname
\fi
#2}}
\providecommand{\BIBdecl}{\relax}
\BIBdecl

\bibitem{figueiredo2007gradient}
M.~A. Figueiredo, R.~D. Nowak, and S.~J. Wright, ``Gradient projection for
  sparse reconstruction: {A}pplication to compressed sensing and other inverse
  problems,'' \emph{{IEEE} J. Sel. Top. Signal. Process.}, vol.~1, no.~4, pp.
  586--597, 2007.

\bibitem{blumensath2008iterative}
T.~Blumensath and M.~E. Davies, ``Iterative thresholding for sparse
  approximations,'' \emph{J. Fourier Anal. Appl.}, vol.~14, no. 5-6, pp.
  629--654, 2008.

\bibitem{blumensath2009iterative}
------, ``Iterative hard thresholding for compressed sensing,'' \emph{Appl.
  Comput. Harmon. Anal.}, vol.~27, no.~3, pp. 265--274, 2009.

\bibitem{hunt1973application}
B.~R. Hunt, ``The application of constrained least squares estimation to image
  restoration by digital computer,'' \emph{{IEEE} Trans. Comput.}, vol. 100,
  no.~9, pp. 805--812, 1973.

\bibitem{galatsanos1991least}
N.~P. Galatsanos, A.~K. Katsaggelos, R.~T. Chin, and A.~D. Hillery, ``Least
  squares restoration of multichannel images,'' \emph{{IEEE} Trans. Signal
  Process.}, vol.~39, no.~10, pp. 2222--2236, 1991.

\bibitem{mesarovic1995regularized}
V.~Z. Mesarovic, N.~P. Galatsanos, and A.~K. Katsaggelos, ``Regularized
  constrained total least squares image restoration,'' \emph{{IEEE} Trans.
  Image Process.}, vol.~4, no.~8, pp. 1096--1108, 1995.

\bibitem{puryear2012constrained}
C.~I. Puryear, O.~N. Portniaguine, C.~M. Cobos, and J.~P. Castagna,
  ``Constrained least-squares spectral analysis: {A}pplication to seismic
  data,'' \emph{Geophysics}, vol.~77, no.~5, pp. V143--V167, 2012.

\bibitem{chen2015seismic}
Y.~Chen, J.~Yuan, S.~Zu, S.~Qu, and S.~Gan, ``Seismic imaging of
  simultaneous-source data using constrained least-squares reverse time
  migration,'' \emph{J. Appl. Geophys.}, vol. 114, pp. 32--35, 2015.

\bibitem{menke2018geophysical}
W.~Menke, \emph{Geophysical data analysis: {D}iscrete inverse theory}.\hskip
  1em plus 0.5em minus 0.4em\relax Academic press, 2018.

\bibitem{tranter2017fast}
J.~Tranter, N.~D. Sidiropoulos, X.~Fu, and A.~Swami, ``Fast unit-modulus least
  squares with applications in beamforming,'' \emph{{IEEE} Trans. Signal
  Process.}, vol.~65, no.~11, pp. 2875--2887, 2017.

\bibitem{zhang2021fast}
M.~Zhang, J.~Li, S.~Zhu, and X.~Chen, ``Fast and simple gradient projection
  algorithms for phase-only beamforming,'' \emph{{IEEE} Trans. Veh. Technol.},
  2021.

\bibitem{jain2010guaranteed}
P.~Jain, R.~Meka, and I.~S. Dhillon, ``Guaranteed rank minimization via
  singular value projection,'' in \emph{Proc. Adv. Neural Inf. Process. Syst.},
  2010, pp. 937--945.

\bibitem{chen2015fast}
Y.~Chen and M.~J. Wainwright, ``Fast low-rank estimation by projected gradient
  descent: {G}eneral statistical and algorithmic guarantees,'' \emph{arXiv
  preprint arXiv:1509.03025}, 2015.

\bibitem{khanna2018iht}
R.~Khanna and A.~Kyrillidis, ``{IHT} dies hard: {P}rovable accelerated
  iterative hard thresholding,'' in \emph{Proc. Int. Conf. Artif. Intell.
  Stat.}, 2018, pp. 188--198.

\bibitem{lin2007projected}
C.-J. Lin, ``Projected gradient methods for nonnegative matrix factorization,''
  \emph{Neural Comput.}, vol.~19, no.~10, pp. 2756--2779, 2007.

\bibitem{mohammadiha2009nonnegative}
N.~Mohammadiha and A.~Leijon, ``Nonnegative matrix factorization using
  projected gradient algorithms with sparseness constraints,'' in \emph{{IEEE}
  Int. Symp. Signal Process. Inf. Technol.}\hskip 1em plus 0.5em minus
  0.4em\relax IEEE, 2009, pp. 418--423.

\bibitem{guan2012nenmf}
N.~Guan, D.~Tao, Z.~Luo, and B.~Yuan, ``{NeNMF}: An optimal gradient method for
  nonnegative matrix factorization,'' \emph{{IEEE} Trans. Signal Process.},
  vol.~60, no.~6, pp. 2882--2898, 2012.

\bibitem{luenberger1984linear}
D.~G. Luenberger, Y.~Ye \emph{et~al.}, \emph{Linear and nonlinear
  programming}.\hskip 1em plus 0.5em minus 0.4em\relax Springer, 1984, vol.~2.

\bibitem{bertsekas1997nonlinear}
D.~P. Bertsekas, ``Nonlinear programming,'' \emph{J. Oper. Res. Soc.}, vol.~48,
  no.~3, pp. 334--334, 1997.

\bibitem{beck2017first}
A.~Beck, \emph{First-order methods in optimization}.\hskip 1em plus 0.5em minus
  0.4em\relax SIAM, 2017.

\bibitem{jain2017non}
P.~Jain, P.~Kar \emph{et~al.}, ``Non-convex optimization for machine
  learning,'' \emph{Found. Trends Mach. Learn.}, vol.~10, no. 3-4, pp.
  142--336, 2017.

\bibitem{combettes2011proximal}
P.~L. Combettes and J.-C. Pesquet, ``Proximal splitting methods in signal
  processing,'' in \emph{Fixed-point algorithms for inverse problems in science
  and engineering}.\hskip 1em plus 0.5em minus 0.4em\relax Springer, 2011, pp.
  185--212.

\bibitem{candes2005decoding}
E.~J. Candes and T.~Tao, ``Decoding by linear programming,'' \emph{{IEEE}
  Trans. Inf. Theory}, vol.~51, no.~12, pp. 4203--4215, 2005.

\bibitem{sun2016guaranteed}
R.~Sun and Z.-Q. Luo, ``Guaranteed matrix completion via non-convex
  factorization,'' \emph{{IEEE} Trans. Inf. Theory}, vol.~62, no.~11, pp.
  6535--6579, 2016.

\bibitem{beck2018globally}
A.~Beck and Y.~Vaisbourd, ``Globally solving the trust region subproblem using
  simple first-order methods,'' \emph{{SIAM} J. Optim.}, vol.~28, no.~3, pp.
  1951--1967, 2018.

\bibitem{luenberger1972gradient}
D.~G. Luenberger, ``The gradient projection method along geodesics,''
  \emph{Manag. Sci.}, vol.~18, no.~11, pp. 620--631, 1972.

\bibitem{lichnewsky1979minimisation}
A.~Lichnewsky, ``Minimisation des fonctionnelles d{\'e}finies sur une
  vari{\'e}t{\'e} par la methode du gradi{\"e}nt conjugu{\'e},'' Ph.D.
  dissertation, These de Doctorat d'Etat. Paris: Universit{\'e} de Paris-Sud,
  1979.

\bibitem{gabay1982minimizing}
D.~Gabay, ``Minimizing a differentiable function over a differential
  manifold,'' \emph{J. Optim. Theory Appl.}, vol.~37, no.~2, pp. 177--219,
  1982.

\bibitem{uschmajew2020geometric}
A.~Uschmajew and B.~Vandereycken, ``Geometric methods on low-rank matrix and
  tensor manifolds,'' in \emph{Variational methods for nonlinear geometric data
  and applications}.\hskip 1em plus 0.5em minus 0.4em\relax Springer Nature,
  2020, pp. 261--313.

\bibitem{o2015adaptive}
B.~O’donoghue and E.~Candes, ``Adaptive restart for accelerated gradient
  schemes,'' \emph{Found. Comput. Math.}, vol.~15, no.~3, pp. 715--732, 2015.

\bibitem{strang1960kantorovich}
W.~G. Strang, ``On the {K}antorovich inequality,'' in \emph{Proc. Amer. Math.
  Soc}, vol.~11, no. 468, 1960, pp. 0095--09\,601.

\bibitem{chunikhina2014performance}
E.~Chunikhina, R.~Raich, and T.~Nguyen, ``Performance analysis for matrix
  completion via iterative hard-thresholded {SVD},'' in \emph{Proc. {IEEE}
  Stat. Signal Process. Workshop}.\hskip 1em plus 0.5em minus 0.4em\relax IEEE,
  2014, pp. 392--395.

\bibitem{vu2019convergence}
T.~Vu, R.~Raich, and X.~Fu, ``On convergence of projected gradient descent for
  minimizing a large-scale quadratic over the unit sphere,'' in \emph{Proc.
  {IEEE} Int. Workshop Mach. Learn. Signal Process.}\hskip 1em plus 0.5em minus
  0.4em\relax IEEE, 2019, pp. 1--6.

\bibitem{vu2019local}
T.~Vu and R.~Raich, ``Local convergence of the {Heavy Ball} method in iterative
  hard thresholding for low-rank matrix completion,'' in \emph{Proc. {IEEE}
  Int. Conf. Acoust. Speech Signal Process.}\hskip 1em plus 0.5em minus
  0.4em\relax IEEE, 2019, pp. 3417--3421.

\bibitem{vu2019accelerating}
------, ``Accelerating iterative hard thresholding for low-rank matrix
  completion via adaptive restart,'' in \emph{Proc. {IEEE} Int. Conf. Acoust.
  Speech Signal Process.}\hskip 1em plus 0.5em minus 0.4em\relax IEEE, 2019,
  pp. 2917--2921.

\bibitem{vu2021exact}
------, ``Exact linear convergence rate analysis for low-rank symmetric matrix
  completion via gradient descent,'' in \emph{Proc. {IEEE} Int. Conf. Acoust.
  Speech Signal Process.}\hskip 1em plus 0.5em minus 0.4em\relax IEEE, 2021,
  pp. 3240--3244.

\bibitem{vu2021closed}
------, ``A closed-form bound on the asymptotic linear convergence of iterative
  methods via fixed point analysis,'' \emph{Optim. Lett.}, 2022.

\bibitem{vu2021perturbation}
T.~Vu, E.~Chunikhina, and R.~Raich, ``Perturbation expansions and error bounds
  for the truncated singular value decomposition,'' \emph{Linear Algebra
  Appl.}, 2021.

\bibitem{polyak1964some}
B.~T. Polyak, ``Some methods of speeding up the convergence of iteration
  methods,'' \emph{{USSR} Comput. Math. Math. Phys.}, vol.~4, no.~5, pp. 1--17,
  1964.

\bibitem{gelfand1941normierte}
I.~Gelfand, ``Normierte ringe,'' \emph{Rec. Math. [Mat. Sbornik]}, vol.~9,
  no.~1, pp. 3--24, 1941.

\bibitem{vasilyev2013depth}
F.~Vasilyev and A.~Y. Ivanitskiy, \emph{In-depth analysis of linear
  programming}.\hskip 1em plus 0.5em minus 0.4em\relax Springer Science \&
  Business Media, 2013.

\bibitem{foote1984regularity}
R.~L. Foote, ``Regularity of the distance function,'' \emph{Proc. Am. Math.
  Soc.}, vol.~92, no.~1, pp. 153--155, 1984.

\bibitem{dudek1994nonlinear}
E.~Dudek and K.~Holly, ``Nonlinear orthogonal projection,'' \emph{Ann. Pol.
  Math.}, vol.~59, no.~1, pp. 1--31, 1994.

\bibitem{lewis2008alternating}
A.~S. Lewis and J.~Malick, ``Alternating projections on manifolds,''
  \emph{Math. Oper. Res.}, vol.~33, no.~1, pp. 216--234, 2008.

\bibitem{ambrosio1998curvature}
L.~Ambrosio and C.~Mantegazza, ``Curvature and distance function from a
  manifold,'' \emph{J. Geom. Anal.}, vol.~8, no.~5, pp. 723--748, 1998.

\bibitem{absil2012projection}
P.-A. Absil and J.~Malick, ``Projection-like retractions on matrix manifolds,''
  \emph{{SIAM} J. Optim.}, vol.~22, no.~1, pp. 135--158, 2012.

\bibitem{rataj2019curvature}
J.~Rataj and M.~Z{\"a}hle, \emph{Curvature measures of singular sets}.\hskip
  1em plus 0.5em minus 0.4em\relax Springer, 2019.

\bibitem{leobacher2021existence}
G.~Leobacher and A.~Steinicke, ``Existence, uniqueness and regularity of the
  projection onto differentiable manifolds,'' \emph{Ann. Glob. Anal. Geom.},
  pp. 1--29, 2021.

\bibitem{absil2009optimization}
P.-A. Absil, R.~Mahony, and R.~Sepulchre, \emph{Optimization algorithms on
  matrix manifolds}.\hskip 1em plus 0.5em minus 0.4em\relax Princeton
  University Press, 2009.

\bibitem{polyak1987introduction}
B.~T. Polyak, \emph{Introduction to optimization. Optimization software}.\hskip
  1em plus 0.5em minus 0.4em\relax Inc., Publications Division, New York, 1987,
  vol.~1.

\bibitem{hwang2004cauchy}
S.-G. Hwang, ``{C}auchy's interlace theorem for eigenvalues of {H}ermitian
  matrices,'' \emph{Am. Math. Mon.}, vol. 111, no.~2, pp. 157--159, 2004.

\bibitem{bellman1953stability}
R.~Bellman, \emph{Stability theory of differential equations}.\hskip 1em plus
  0.5em minus 0.4em\relax McGraw-Hill, NY, New York, 1953.

\bibitem{milton1964handbook}
M.~Abramowitz and I.~A. Stegun, ``Handbook of mathematical functions with
  formulas, graphs, and mathematical tables,'' \emph{{NBS} Appl. Math. Ser.},
  vol.~55, 1964.

\bibitem{hong2013matrix}
X.~Hong, X.~Lai, and R.~Zhao, ``Matrix-based algorithms for constrained
  least-squares and minimax designs of 2-d linear-phase {FIR} filters,''
  \emph{{IEEE} Trans. Signal Process.}, vol.~61, no.~14, pp. 3620--3631, 2013.

\bibitem{de2004constrained}
M.~L. de~Campos, S.~Werner, and J.~A. Apolin{\'a}rio, ``Constrained adaptive
  filters,'' in \emph{Adaptive Antenna Arrays: Trends and Applications}.\hskip
  1em plus 0.5em minus 0.4em\relax Springer Berlin Heidelberg, 2004, pp.
  46--64.

\bibitem{frost1972algorithm}
O.~L. Frost, ``An algorithm for linearly constrained adaptive array
  processing,'' \emph{Proc. IEEE}, vol.~60, no.~8, pp. 926--935, 1972.

\bibitem{resende1996fast}
L.~S. Resende, J.~M.~T. Romano, and M.~G. Bellanger, ``A fast least-squares
  algorithm for linearly constrained adaptive filtering,'' \emph{{IEEE} Trans.
  Signal Process.}, vol.~44, no.~5, pp. 1168--1174, 1996.

\bibitem{van1985method}
C.~Van~Loan, ``On the method of weighting for equality-constrained
  least-squares problems,'' \emph{{SIAM} J. Numer. Anal.}, vol.~22, no.~5, pp.
  851--864, 1985.

\bibitem{dunn1981global}
J.~C. Dunn, ``Global and asymptotic convergence rate estimates for a class of
  projected gradient processes,'' \emph{{SIAM} J. Control Optim.}, vol.~19,
  no.~3, pp. 368--400, 1981.

\bibitem{bertsekas1982projection}
D.~P. Bertsekas and E.~M. Gafni, ``Projection methods for variational
  inequalities with application to the traffic assignment problem,'' in
  \emph{Nondifferential and variational techniques in optimization}.\hskip 1em
  plus 0.5em minus 0.4em\relax Springer, 1982, pp. 139--159.

\bibitem{luo1993error}
Z.-Q. Luo and P.~Tseng, ``Error bound and reduced-gradient projection
  algorithms for convex minimization over a polyhedral set,'' \emph{{SIAM} J.
  Optim.}, vol.~3, no.~1, pp. 43--59, 1993.

\bibitem{johansson2006application}
B.~Johansson, T.~Elfving, V.~Kozlov, Y.~Censor, P.-E. Forss{\'e}n, and
  G.~Granlund, ``The application of an oblique-projected {L}andweber method to
  a model of supervised learning,'' \emph{Math. comput. model.}, vol.~43, no.
  7-8, pp. 892--909, 2006.

\bibitem{meyer2000matrix}
C.~D. Meyer, \emph{Matrix analysis and applied linear algebra}.\hskip 1em plus
  0.5em minus 0.4em\relax SIAM, 2000, vol.~71.

\bibitem{dogandvzic2011mask}
A.~Dogand{\v{z}}i{\'c}, R.~Gu, and K.~Qiu, ``Mask iterative hard thresholding
  algorithms for sparse image reconstruction of objects with known contour,''
  in \emph{Conf. Rec. Asilomar Conf. Signals Syst. Comput.}\hskip 1em plus
  0.5em minus 0.4em\relax IEEE, 2011, pp. 2111--2116.

\bibitem{gao2014priori}
Z.~Gao, C.~Zhang, Z.~Wang, and S.~Chen, ``Priori-information aided iterative
  hard threshold: A low-complexity high-accuracy compressive sensing based
  channel estimation for {TDS-OFDM},'' \emph{{IEEE} Trans. Wireless Commun.},
  vol.~14, no.~1, pp. 242--251, 2014.

\bibitem{stockle2016channel}
C.~St{\"o}ckle, J.~Munir, A.~Mezghani, and J.~A. Nossek, ``Channel estimation
  in massive {MIMO} systems using 1-bit quantization,'' in \emph{IEEE Workshop
  Signal Process. Adv. Wirel. Commun.}\hskip 1em plus 0.5em minus 0.4em\relax
  IEEE, 2016, pp. 1--6.

\bibitem{stoeckle2015doa}
C.~Stoeckle, J.~Munir, A.~Mezghani, and J.~A. Nossek, ``{DOA} estimation
  performance and computational complexity of subspace-and compressed
  sensing-based methods,'' in \emph{Int. ITG Workshop Smart Antennas}.\hskip
  1em plus 0.5em minus 0.4em\relax VDE, 2015, pp. 1--6.

\bibitem{yu2018homotopy}
Y.~Yu, Z.~Sun, W.~Zhu, and J.~Gu, ``A homotopy iterative hard thresholding
  algorithm with extreme learning machine for scene recognition,'' \emph{IEEE
  Access}, vol.~6, pp. 30\,424--30\,436, 2018.

\bibitem{tarantola2005inverse}
A.~Tarantola, \emph{Inverse problem theory and methods for model parameter
  estimation}.\hskip 1em plus 0.5em minus 0.4em\relax SIAM, 2005.

\bibitem{hager2001minimizing}
W.~W. Hager, ``Minimizing a quadratic over a sphere,'' \emph{{SIAM} J. Optim.},
  vol.~12, no.~1, pp. 188--208, 2001.

\bibitem{candes2009exact}
E.~J. Cand{\`e}s and B.~Recht, ``Exact matrix completion via convex
  optimization,'' \emph{Found. Comput. Math.}, vol.~9, no.~6, p. 717, 2009.

\bibitem{jain2015fast}
P.~Jain and P.~Netrapalli, ``Fast exact matrix completion with finite
  samples,'' in \emph{Proc. Conf. Learn. Theory}, 2015, pp. 1007--1034.

\bibitem{ding2020leave}
L.~Ding and Y.~Chen, ``Leave-one-out approach for matrix completion: Primal and
  dual analysis,'' \emph{{IEEE} Trans. Inf. Theory}, vol.~66, no.~11, pp.
  7274--7301, 2020.

\bibitem{eckart1936approximation}
C.~Eckart and G.~Young, ``The approximation of one matrix by another of lower
  rank,'' \emph{Psychometrika}, vol.~1, no.~3, pp. 211--218, 1936.

\bibitem{lee2003introduction}
M.~Lee~John, ``Introduction to smooth manifolds,'' \emph{Graduate Texts in
  Mathematics}, vol. 218, 2003.

\bibitem{feppon2018geometric}
F.~Feppon and P.~F. Lermusiaux, ``A geometric approach to dynamical model order
  reduction,'' \emph{{SIAM} J. Matrix Anal. Appl.}, vol.~39, no.~1, pp.
  510--538, 2018.

\bibitem{hauswirth2016projected}
A.~Hauswirth, S.~Bolognani, G.~Hug, and F.~D{\"o}rfler, ``Projected gradient
  descent on {R}iemannian manifolds with applications to online power system
  optimization,'' in \emph{Annu. Allert. Conf. Commun. Control Comput.}\hskip
  1em plus 0.5em minus 0.4em\relax IEEE, 2016, pp. 225--232.

\bibitem{cai2018spectral}
J.-F. Cai, T.~Wang, and K.~Wei, ``Spectral compressed sensing via projected
  gradient descent,'' \emph{{SIAM} J. Optim.}, vol.~28, no.~3, pp. 2625--2653,
  2018.

\bibitem{cunningham2015linear}
J.~P. Cunningham and Z.~Ghahramani, ``Linear dimensionality reduction: Survey,
  insights, and generalizations,'' \emph{J. Mach. Learn. Res.}, vol.~16, no.~1,
  pp. 2859--2900, 2015.

\bibitem{nesterov2003introductory}
Y.~Nesterov, \emph{Introductory lectures on convex optimization: A basic
  course}.\hskip 1em plus 0.5em minus 0.4em\relax Springer Science \& Business
  Media, 2003, vol.~87.

\bibitem{boyd2004convex}
S.~Boyd and L.~Vandenberghe, \emph{Convex optimization}.\hskip 1em plus 0.5em
  minus 0.4em\relax Cambridge university press, 2004.

\bibitem{ma2020implicit}
C.~Ma, K.~Wang, Y.~Chi, and Y.~Chen, ``Implicit regularization in nonconvex
  statistical estimation: Gradient descent converges linearly for phase
  retrieval, matrix completion, and blind deconvolution,'' in \emph{Found.
  Comput. Math.}, 2020, pp. 451–--632.

\bibitem{polyak1963gradient}
B.~T. Polyak, ``Gradient methods for minimizing functionals,'' \emph{Zhurnal
  vychislitel'noi matematiki i matematicheskoi fiziki}, vol.~3, no.~4, pp.
  643--653, 1963.

\bibitem{daniel1967conjugate}
J.~W. Daniel, ``The conjugate gradient method for linear and nonlinear operator
  equations,'' \emph{{SIAM} J. Numer. Anal.}, vol.~4, no.~1, pp. 10--26, 1967.

\bibitem{kantorovich1948functional}
L.~V. Kantorovich, ``Functional analysis and applied mathematics,''
  \emph{Uspekhi Matematicheskikh Nauk}, vol.~3, no.~6, pp. 89--185, 1948.

\end{thebibliography}

\vfill



\clearpage
\pagenumbering{arabic}
\twocolumn[{\Large \bf Supplementary Material - ``On Local Linear Convergence of Projected Gradient Descent for Constrained Least Squares'', Trung Vu and Raviv Raich}\\ \\]

\renewcommand{\appendixname}{Section}
\appendices

\section{Related Works}

In this section, we review existing approaches to convergence analysis of iterative first-order methods in optimization including projected gradient descent. We present several aspects of convergence, namely, convergence to a global versus a local optimum and speed of convergence. Finally, we clarify our contribution in this work with regard to previous works in the literature.

\subsection{Convergence of Iterative First-Order Methods} 
Convergence properties of iterative algorithms such as PGD often involve two key aspects: the quality of convergent points and the speed of convergence.
On the one hand, the quality of convergent points provides useful insights into when the algorithm converges, whether it converges to a stationary point or a set of stationary points of the problem, and how big is the gap between the objective function at the convergent point and the optimal objective value. 
On the other hand, the speed of convergence concerns the order of convergence, the rate of convergence, and the number of iterations required to obtain sufficiently small errors. 
Let $\{\bm x^{(k)}\}_{k=0}^\infty$ be the sequence of updates generated by a certain iterative first-order method (e.g., PGD). In order to prove the convergence of the algorithm, it is common \cite{luenberger1984linear,polyak1987introduction,bertsekas1997nonlinear,nesterov2003introductory,boyd2004convex} to consider the convergence of the following quantities to $\bm 0$ as $k \to \infty$: {\em (i)} the norm of the generalized gradient ($\unorm{\frac{1}{\eta} (\bm x^{(k+1)} - \bm x^{(k)})}$), {\em (ii)} the gap between current objective function and the optimal value ($\uabs{f(\bm x^{(k)})-f^*}$), and {\em (iii)} the distance to a convergent point ($\unorm{\bm x^{(k)}-\bm x^*}$). Here, we note that $f^*$ and $\bm x^*$ are the limiting points of the objective function $f(\bm x^{(k)})$ and the parameter $\bm x^{(k)}$ as the number of iterations $k$ goes to infinity, respectively.
In (i), the convergence of the generalized gradient norm to $0$ implies the stationarity condition of the constrained problem is satisfied. It follows that the algorithm converges to \textit{a set of stationary points} of the problem.
In (ii), the convergence on the function side is often obtained via the monotonicity of the objective-value sequence $\{ f(\bm x^{(k)}) \}_{k=0}^\infty$ (e.g., decreasing to a limiting value $f^*$). This in turn implies the sequence $\{ \bm x^{(k)} \}_{k=0}^\infty$ converges to \textit{a set of local optima} that yields the same objective function value $f^*$.\footnote{An example for such scenario is minimizing a convex but not strongly convex function $f(\bm x) = \norm{\bm x}_1$ subject to $\bm x \in \R^n$ and $\norm{\bm x}_2^2=1$. The $2n$ vectors $\{\bm e_i\}_{i=1}^n$ and $\{-\bm e_i\}_{i=1}^n$ are local minimizers that obtain the same objective function value. It is worthwhile mentioning that they are also the global solutions of the foregoing problem.}
In (iii), the convergence of $\unorm{\bm x^{(k)}-\bm x^*}$ implies convergence to a unique point that is often an \textit{isolated local optimum point} of the problem. Typically, convergence on the domain side is used in linear convergence proofs for strongly convex settings.

\subsection{Convergence to a Global Optimum} 
In general, a stationary point can be a saddle point, a local/global minimum, or local/global maximum of the problem.
When both the objective function and the constraint set are convex, it is well-known that all stationary points are also global optima of the problem.
Convergence analysis of iterative algorithm (e.g., PGD) in convex optimization therefore focus on providing a universal upper bound on the distance to the global solutions. Analysis on the domain side (iii) is usually used in the presence of \textit{strong convexity} that guarantees the \textit{uniqueness} of the global optimum \cite{luenberger1984linear}-Section~8.6. Without the strong convexity, one may resort to analysis on the function side (ii) in order to prove convergence to \textit{a set} of global optima \cite{beck2017first}-Section~10.4.3.
When convexity is not guaranteed, due to a non-convex objective and/or a non-convex constraint set, convergence analysis has recourse to a set of stationary points by bounding the generalized gradient norm through iterations (i) \cite{bertsekas1997nonlinear}-Section~2.3.2.
Notwithstanding, recent advances in structured non-convex optimization have shed light on convergence guarantees to global solutions of the problem.
By exploiting the special structure of some classes of non-convex problems and using appropriate initialization, PGD can be shown to converge to a unique global optimum despite the non-convexity of these problems.
Examples of such powerful results include sparse recovery with restricted isometry properties \cite{blumensath2009iterative}, matrix completion with incoherence properties \cite{ma2020implicit}, empirical risk minimization with restricted strong convexity and smoothness properties \cite{khanna2018iht}, and spherically constrained quadratic minimization with hidden convexity \cite{beck2018globally}.

\subsection{Convergence to a Local Optimum}
In general non-convex settings, domain-side convergence analysis is restricted to the local region around the convergence point $\bm x^*$. Such points can be a saddle point, a local minimum, or a local maximum of the problem.
The ROC associated with $\bm x^*$ is the neighborhood in which the algorithm (e.g., PGD) is guaranteed to converge to $\bm x^*$ when initialized inside this region.
To a certain extent, the ROC in the aforementioned global convergence analysis is the entire feasible space. However, while global convergence analysis does not require the initialization to be close to the global solution, it often ignores the local structure near the solution needed for establishing sharp bounds on the speed of convergence.
In particular, bounding techniques employed in global convergence analysis hold universally, including worst-case scenarios. Thus, in many problem-specific settings where the solution lies in a benign neighborhood, the global analysis could lead to conservative convergence rate bounds.
As an illustration, in minimizing a smooth and strongly convex function $f$, gradient descent with a fixed step size achieves the rate of convergence at most $(\kappa-1)/(\kappa+1)$ \cite{polyak1963gradient}, where $\kappa$ is the (global) condition number of $f$. Recall that the condition number of a differentiable convex function is the ratio
of its smoothness $L$ to strong convexity $\mu$ \cite{nesterov2003introductory}.
For any quadratic function, this global bound is also an exact and attainable estimate thanks to the fact that the objective curvature is unchanged everywhere. 
For non-quadratic objectives, on the other hand, this global bound may be loose as $\kappa$ takes into account the worst-case scenario, in which the objective function is most ill-conditioned. 
The asymptotic behavior of gradient descent near the solution indeed relies on the condition number of the local Hessian $\kappa(\bm x^*)$ of the objective function, defining as $\lambda_{\max}(\nabla^2 f(\bm x^*)) / \lambda_{\min}(\nabla^2 f(\bm x^*))$. Generally, we have $\mu \leq \lambda_{\min}(\nabla^2 f(\bm x)) \leq \lambda_{\max}(\nabla^2 f(\bm x^*)) \leq L$, for any $\bm x$ in the domain of $f$, which implies $\kappa(\bm x^*) \leq \kappa$.
This local condition number $\kappa(\bm x^*)$ can be significantly smaller than the global condition number $\kappa$ and hence, \textit{a local convergence analysis can yield a tighter bound that reflects the actual convergence speed of the algorithm near the solution}. 
Similar situation also occurs for constrained least squares in which the Hessian restricted to the constrained set can depend on the local structure of the set.

\subsection{Speed of Convergence}
To illustrate the concept of convergence speed, let us consider the convergence on the domain side, i.e., the distance $\norm{\bm x^{(k)} - \bm x^*}$.
Let $\mu$ be a number between $0$ and $1$.
The convergence of $\{\bm x^{(k)}\}_{k=0}^\infty$ to $\bm x^*$ is said to be at rate $\mu \triangleq \mu(\{\bm x^{(k)}\}_{k=0}^\infty)$ if $\mu = \inf_{\{\epsilon_k\}_{k=0}^\infty} \lim_{k \to \infty} {\epsilon_{k+1}}/{\epsilon_k}$, for any monotonically decreasing sequence $\{\epsilon_k\}_{k=0}^\infty$ satisfying $\norm{\bm x^{(k)} - \bm x^*} \leq \epsilon_k$ for all index $k$.
The asymptotic rate of convergence of gradient descent to $\bm x^*$, denoted by $\rho$, is defined by the worst-case rate of convergence among all possible sequences $\{\bm x^{(k)}\}_{k=0}^\infty$ that are generated by the algorithm and converge to $\bm x^*$, i.e., $\rho = \sup_{\{\bm x^{(k)}\}_{k=0}^\infty} \mu (\{\bm x^{(k)}\}_{k=0}^\infty)$.
Depending on the value of $\rho$ in the interval $[0,1]$, the convergence is said to be \textit{sublinear} when $\rho=1$, \textit{linear} when $0<\rho<1$, or \textit{superlinear} when $\rho=0$. The lower the value of $\rho$ is, the faster the speed of convergence is and the fewer the number of iterations needed is to obtain a close approximation of the solution.
Thus, analytical estimation of the convergence rate plays a pivotal role in convergence analysis.
We would like to note two distinct methods for linear convergence rate analysis dating back to the 1960s. 
The first approach was proposed by Polyak \cite{polyak1987introduction}, based on his earlier study into nonlinear difference equations \cite{polyak1964some}. 
The author analyzed the asymptotic convergence of gradient descent for minimizing some objective function $f$.
Assuming $\bm x^*$ is a non-singular local minimum of $f$, Polyak showed that for any $\delta>0$, there exists $\epsilon>0$ such that if $\norm{\bm x^{(0)} - \bm x^*} < \epsilon$ then the sequence $\{\bm x^{(k)} \}_{k=0}^\infty$ generated by gradient descent satisfies
\begin{align*} 
    &\norm{\bm x^{(k)} - \bm x^*} \leq \norm{\bm x^{(0)} - \bm x^*} (\rho+\delta)^k , \numberthis \label{equ:polyak87}
\end{align*}
where $\rho = \max \{ \abs{1-\eta \lambda_{\max}}, \abs{1-\eta \lambda_{\min}} \}$ and $\lambda_{\max}$ and $\lambda_{\min}$ are the largest and smallest eigenvalues of $\nabla^2 f(\bm x^*)$, respectively. Here we emphasize that $f$ does not need to be smooth and strongly convex everywhere but only so around $\bm x^*$. By setting $\eta_{opt} = 2/(\lambda_{\max}+\lambda_{\min})$, the optimal rate of convergence is given by $\rho_{opt} = (\kappa^* - 1)/(\kappa^* + 1)$, where $\kappa^* = \lambda_{\max}/\lambda_{\min}$ is the condition number of the local Hessian $\nabla^2 f(\bm x^*)$. 
When $f$ is a strongly convex quadratic, the local result coincides with the aforementioned global result in \cite{polyak1963gradient} ($\kappa^* = \kappa$). The expression of $\rho$ in (\ref{equ:polyak87}) is called the \textbf{asymptotic convergence rate} of gradient descent with fixed step size $\eta$.\footnote{It is worthwhile to mention that using a similar technique, Nesterov \cite{nesterov2003introductory} proved that the asymptotic rate is at most $\hat{\rho} = (\kappa^* + 1)/(\kappa^* + 3)$. While this bound also exploits the local information of the optimization problem, we note that it is not as tight as the bound in (\ref{equ:polyak87}).}
The second approach was developed by Daniel \cite{daniel1967conjugate} in 1967, while studying gradient descent with \textit{exact line search}, i.e., choosing $\eta$ that minimizes the objective at each iteration.
Utilizing the Kantorovich inequality \cite{kantorovich1948functional}, the author proved that if $\bm x^{(0)}$ is sufficiently close to $\bm x^*$, there exist a constant $\epsilon$ and a sequence $\{q_k\}_{k=0}^\infty$ such that
\begin{align*}
    \norm{\bm x^{(k)} - \bm x^*} \leq \epsilon \prod_{i=0}^k q_i , \qquad \lim_{k \to \infty} q_k = (\kappa^* - 1)/(\kappa^* + 1) .
\end{align*}
Note that here the characteristics of convergence are also exploited through the Hessian $\nabla^2 f(\bm x^*)$. This result was then extended to study the asymptotic convergence of projected gradient descent for constrained optimization \cite{luenberger1972gradient,lichnewsky1979minimisation,gabay1982minimizing}.

\section{Proof of Example~1}
\label{appdx:c2_Ps}

Our goal in this proof is to establish the Lipschitz differentiability of the projection operator onto the unit sphere $\C = \{ \bm x \in \R^n \mid \unorm{\bm x}=1 \}$.
We start by establishing the Lipschitz differentiability at a point on $\C$ and then extend it to any nonzero point in $\R^n$. For the Lipschitz differentiability on $\C$, we introduce the following lemma:
\begin{lemma} \label{lem:sphere_unit}
For any $\bm x^* \in \C$, we have
\begin{align} \label{equ:PS_xS}
    \sup_{\bm y \in \Pi_{\C}(\bm x^* + \bm \delta)} \unorm{\bm y - \bm x^* - \bigl( \bm I - \bm x^* (\bm x^*)^\topnew \bigr) \bm \delta} \leq 2 \unorm{\bm \delta}^2.
\end{align}
\end{lemma}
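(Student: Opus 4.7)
The plan is to compute the left-hand side explicitly and then bound via elementary algebra, splitting into two cases. In the degenerate case $\bm \delta = -\bm x^*$, one has $\unorm{\bm \delta} = 1$ and $(\bm I_n - \bm x^*(\bm x^*)^{\topnew})\bm \delta = \bm 0$, so the bound reduces to $\unorm{\bm y - \bm x^*} \leq 2 = 2\unorm{\bm \delta}^2$ for $\bm y \in \Pi_\C(\bm 0) = \C$, which is immediate from the triangle inequality $\unorm{\bm y - \bm x^*} \leq \unorm{\bm y} + \unorm{\bm x^*}$.

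In the generic case $\bm z := \bm x^* + \bm \delta \neq \bm 0$, the projection is the singleton $\{\bm z/\unorm{\bm z}\}$. Decomposing $\bm \delta = \alpha \bm x^* + \bm \delta_\perp$ with $\alpha = (\bm x^*)^{\topnew}\bm \delta$ and $\bm \delta_\perp \perp \bm x^*$, so that $\bm z = (1+\alpha)\bm x^* + \bm \delta_\perp$ and $(\bm I_n - \bm x^*(\bm x^*)^{\topnew})\bm \delta = \bm \delta_\perp$, direct substitution and regrouping along the orthogonal directions $\bm x^*$ and $\bm \delta_\perp$ yield the closed form
\begin{align*}
    \bm r := \bm y - \bm x^* - \bm \delta_\perp = \frac{1}{\unorm{\bm z}}\bigl[(1+\alpha - \unorm{\bm z})\bm x^* + (1 - \unorm{\bm z})\bm \delta_\perp\bigr].
\end{align*}
By orthogonality of $\bm x^*$ and $\bm \delta_\perp$, this gives $\unorm{\bm z}^2 \unorm{\bm r}^2 = (1+\alpha - \unorm{\bm z})^2 + (1 - \unorm{\bm z})^2 \unorm{\bm \delta_\perp}^2$.

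The key algebraic tools are the identity $\unorm{\bm z}^2 = (1+\alpha)^2 + \unorm{\bm \delta_\perp}^2$, which factors as $(1+\alpha - \unorm{\bm z})(1+\alpha + \unorm{\bm z}) = -\unorm{\bm \delta_\perp}^2$ and hence rewrites the first summand as $\unorm{\bm \delta_\perp}^4/(1+\alpha + \unorm{\bm z})^2$, and the reverse-triangle inequality $|1 - \unorm{\bm z}| \leq \unorm{\bm \delta}$. Together, these reduce the target inequality $\unorm{\bm r}^2 \leq 4 \unorm{\bm \delta}^4$ to a polynomial inequality in $\alpha$ and $\unorm{\bm \delta_\perp}$ subject to the defining constraint on $\unorm{\bm z}$. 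The main obstacle is that the bound is tightest precisely in the regime $\bm \delta \approx -\bm x^*$, where $\unorm{\bm z}$ is small and the $1/\unorm{\bm z}^2$ factor in $\unorm{\bm r}^2$ is delicate; the plan for this step is to introduce the shorthand $v = 1+\alpha$, $b = \unorm{\bm \delta_\perp}$, $u = \unorm{\bm z}$ with the single constraint $u^2 = v^2 + b^2$, and verify the resulting two-parameter polynomial inequality directly on its admissible domain, possibly after further splitting by whether $\unorm{\bm \delta} \geq 1$ (where coarse triangle-type bounds already help) or $\unorm{\bm \delta} < 1$ (where $u \geq 1 - \unorm{\bm \delta} > 0$ keeps the closed form well-behaved).
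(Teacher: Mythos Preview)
Your proposal is correct and follows essentially the same route as the paper: the degenerate case $\bm x^*+\bm\delta=\bm 0$ is handled identically, and in the generic case both you and the paper compute the residual explicitly and reduce the claim to a two-parameter polynomial inequality that is then verified by a case split. The only cosmetic difference is the parametrization: you use the orthogonal decomposition $(\alpha,\unorm{\bm\delta_\perp})$ of $\bm\delta$, whereas the paper works directly with $u=\unorm{\bm x^*+\bm\delta}$ and $v=\unorm{\bm\delta}^2$, deriving the equivalent inequality $(17u-2)v^2 - 2u(1-u)^2v + (1-u)^4(u+2)\ge 0$ on $(1-u)^2\le v\le(1+u)^2$ and checking it via a three-way split on $u$.
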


\begin{proof}
We consider two cases:

\noindent \textbf{Case 1:} If $\bm x^* + \bm \delta = \bm 0$, then $\Pi_\C(\bm 0) = \C$ and $\unorm{\bm \delta} = \unorm{\bm x^*} = 1$. For any $\bm y \in \C$, substituting $\bm \delta = - \bm x^*$ and then using the fact that $\bm I - \bm x^* (\bm x^*)^\topnew$ is the projection onto the null space of $\bm x^*$, we have
\begin{align*}
    \bm y - \bm x^* - \bigl( \bm I - \bm x^* (\bm x^*)^\topnew \bigr) \bm \delta &= \bm y - \bm x^* + \bigl( \bm I - \bm x^* (\bm x^*)^\topnew \bigr) \bm x^* \\
    &= \bm y - \bm x^* .
\end{align*}
Next, taking the norm and using the triangle inequality yield
\begin{align*}
    \unorm{\bm y - \bm x^* - \bigl( \bm I - \bm x^* (\bm x^*)^\topnew \bigr) \bm \delta} &= \unorm{\bm y - \bm x^*} \\
    &\leq \unorm{\bm y} + \unorm{\bm x^*} = 2 \unorm{\bm \delta}^2 ,
\end{align*}
where the last step stems from $\norm{\bm y} = \norm{\bm x^*} = \norm{\bm \delta} = 1$.
Thus, (\ref{equ:PS_xS}) holds in this case.
    
\noindent \textbf{Case 2:} If $\bm x^* + \bm \delta \neq \bm 0$, then $\Pi_\C(\bm x^* + \bm \delta)$ is singleton containing the unique projection
\begin{align*}
    \P_\C(\bm x^* + \bm \delta) = \frac{\bm x^* + \bm \delta}{\unorm{\bm x^* + \bm \delta}} .
\end{align*}    
Hence, (\ref{equ:PS_xS}) is equivalent to 
\begin{align} \label{equ:PS_xS_1}
    \norm{\frac{\bm x^* + \bm \delta}{\unorm{\bm x^* + \bm \delta}} - \bm x^* - \bigl( \bm I - \bm x^* (\bm x^*)^\topnew \bigr) \bm \delta} \leq 2 \unorm{\bm \delta}^2 .
\end{align}
We prove (\ref{equ:PS_xS_1}) by {\em(i)} showing that for any scalars $u>0$ and $(1-u)^2 \leq v \leq (1+u)^2$:
\begin{align} \label{equ:uv_sphere}
    (17u-2)v^2 - 2u(1-u)^2v + (1-u)^4(u+2) \geq 0 ,
\end{align}
and {\em(ii)} showing that (\ref{equ:uv_sphere}) is equivalent to (\ref{equ:PS_xS_1}) with $u=\unorm{\bm x^* + \bm \delta} > 0$ and $v=\unorm{\bm \delta}^2 \geq 0$.

\noindent {\em(i)} To prove (\ref{equ:uv_sphere}), let us consider the following cases:
\begin{enumerate}
    \item If $0<u \leq {2}/{17}$, then for $v\leq (1+u)^2$, we have
    \begin{align*}
        (17u-2)v^2 &- 2u(1-u)^2v + (1-u)^4(u+2) \\
        &\geq (17u-2)(1+u)^4 - 2u(1-u)^2 (1+u)^2 \\
        &\qquad + (1-u)^4(u+2) \\
        &= 16u^2 (u+2)(u^2+2u+2) \geq 0 .
    \end{align*}
    \item If ${2}/{17}< u \leq {1}/{2}$, then for $(1-u)^2 \leq v \leq (1+u)^2$, the following holds
    \begin{align*}
        (17u-2)v^2 &- 2u(1-u)^2v + (1-u)^4(u+2) \\
        &\geq (17u-2)(1-u)^4 - 2u(1-u)^2 (1+u)^2 \\
        &\qquad + (1-u)^4(u+2) \\
        &= 8u(1-u)^2 (2-u)(1-2u) \geq 0 .
    \end{align*}
    \item If $u>{1}/{2}$, using the quadratic vertex at $v=u(1-u)^2/(17u-2)$ as the minimum point, we obtain
    \begin{align*}
        (17u-2)v^2 &- 2u(1-u)^2v + (1-u)^4(u+2) \\
        &\geq \frac{4(1-u)^4(4u^2+8u-1)}{17u-2} \geq 0 .
    \end{align*}
\end{enumerate}

\noindent {\em(ii)} Now for $u=\unorm{\bm x^* + \bm \delta} > 0$ and $v=\unorm{\bm \delta}^2 \geq 0$, we have $(\bm x^*)^\topnew \bm \delta = (u^2-v-1)/2$ and
\begin{align*}
    (\ref{equ:PS_xS_1}) \Leftrightarrow ~ &\norm{\frac{\bm x^* + \bm \delta}{\unorm{\bm x^* + \bm \delta}} - \bm x^* - \bigl( \bm I - \bm x^* (\bm x^*)^\topnew \bigr) \bm \delta} \leq 2 \unorm{\bm \delta}^2 \\
    \Leftrightarrow ~ &\unorm{\bm x^* + \bm \delta - \unorm{\bm x^* + \bm \delta} (\bm x^* + \bm \delta - \bm x^* (\bm x^*)^\topnew \bm \delta)}^2 \\
    &\qquad \qquad \leq 4 \unorm{\bm x^* + \bm \delta}^2 \unorm{\bm \delta}^4 \\
    \Leftrightarrow ~ &\unorm{(1 - u)(\bm x^* + \bm \delta) + u ((\bm x^*)^\topnew \bm \delta) \bm x^* }_2^2 \leq 4 u^2 v^2 \\
    \Leftrightarrow ~ &(1-u)^2u^2 + u^2 \Bigl( \frac{u^2-v-1}{2} \Bigr)^2 \\
    &+ 2u(1-u)\frac{u^2-v-1}{2}\frac{u^2-v+1}{2} \leq 4 u^2 v^2  \\
    \Leftrightarrow ~ &(\ref{equ:uv_sphere}) .
\end{align*} 
Finally, by the triangle inequality, we have
\begin{align*}
    \uabs{\unorm{\bm x^* + \bm \delta} - \unorm{\bm x^*}} \leq \unorm{\bm \delta} \leq \unorm{-\bm x^*} + \unorm{\bm x^* + \bm \delta} ,
\end{align*}
which in turn verifies $(1-u)^2 \leq v \leq (1+u)^2$.
This completes our proof of the lemma.
\end{proof}

Next, to extend the result in Lemma~\ref{lem:sphere_unit} to any $\bm x \in \R \setminus \{0\}$, we substitute $\bm x^* = \bm x/\unorm{\bm x}$ and $\bm \delta = \bm \delta/\unorm{\bm x}$ into (\ref{equ:PS_xS}) and obtain
\begin{align*}
    \sup_{\bm y \in \Pi_{\C}(\frac{\bm x}{\unorm{\bm x}} + \frac{\bm \delta}{\unorm{\bm x}})} \norm{\bm y - \frac{\bm x}{\unorm{\bm x}} - \Bigl( \bm I_n - \frac{\bm x \bm x^{\topnew}}{\unorm{\bm x}^2} \Bigr) \frac{\bm \delta}{\unorm{\bm x}}} \leq 2 \frac{\unorm{\bm \delta}^2}{\unorm{\bm x}^2} . \numberthis \label{equ:S1}
\end{align*}
Since the projection onto the unit sphere is scale-invariant,
\begin{align*}
    \Pi_{\C}\Bigl(\frac{\bm x}{\unorm{\bm x}} + \frac{\bm \delta}{\unorm{\bm x}} \Bigr) = \Pi_\C (\bm x + \bm \delta) . \numberthis \label{equ:S2}
\end{align*}
Substituting (\ref{equ:S2}) into (\ref{equ:S1}) yields (6). 
Thus, by Definition~2, for any ${\bm x} \neq {\bm 0}$ we obtain
\begin{align*}
    &\nabla \P_\C(\bm x) = \frac{1}{\unorm{\bm x}} \Bigl( \bm I_n - \frac{\bm x \bm x^{\topnew}}{\unorm{\bm x}^2} \Bigr) , \\
    &c_1(\bm x)=\infty , \quad c_2(\bm x)=\frac{2}{\unorm{\bm x}^2} .
\end{align*}

\section{Details of Application B - Iterative Hard Thresholding for Sparse Recovery}
\label{appdx:IHT_counter}

\subsection{Proof of (42)}

In this subsection, we first show that any $\bm x^* \in \Phi_{= s}$ and $\bm x \in \B(\bm x^*, \uabs{x^*_{[s]}}/\sqrt{2})$ share the same index set of $s$-largest elements (in magnitude), i.e., $\Omega_s(\bm x^*)$. Then, we construct a counter-example to demonstrate that $\uabs{x^*_{[s]}}/\sqrt{2}$ is the largest possible radius so that (42) holds.

First, we show that for any $i \in \Omega_s(\bm x^*)$ and $j \in \{1,\ldots,n\} \setminus \Omega_s(\bm x^*)$, $\uabs{x_j} < \uabs{x_i}$ as follows.
In particular, we have
\begin{align*}
    \uabs{x_{j} - x^*_{j}} + \uabs{x_{i} - x^*_{i}} &\leq \sqrt{2 ( (x_{j} - x^*_{[j]})^2 + (x_{i} - x^*_{i})^2 )} \\
    &\leq \sqrt{2 \unorm{\bm x - \bm x^*}^2} < \uabs{x^*_{[s]}} ,
\end{align*}
where the last inequality stems from the fact that $\unorm{\bm x - \bm x^*} < \uabs{x^*_{[s]}}/\sqrt{2}$. 
Now, since $x^*_j = 0$ for all $j \in \{1,\ldots,n\} \setminus \Omega_s(\bm x^*)$, we have
\begin{align*}
    \uabs{x_{j}} &= \uabs{x_{j} - x^*_{j}} \\
    &< \uabs{x^*_{[s]}} - \uabs{x_{i} - x^*_{i}} \\
    &\leq \uabs{x^*_{i}} - \uabs{x_{i} - x^*_{i}} \\
    &\leq \uabs{x^*_{i} + (x_{i} - x^*_{i})} = \uabs{x_{i}} , \numberthis \label{equ:arg_sparse}
\end{align*}
Therefore, every $\bm x \in \B(\bm x^*,\uabs{x^*_{[s]}}/\sqrt{2})$ shares the same index set of $s$-largest (in magnitude) elements with $\bm x^*$, i.e., $\Omega_s(\bm x) = \Omega_s(\bm x^*)$, which implies (42).

We now construct the counter-example as a point $\bm x$ such that $\Omega_s(\bm x) \neq \Omega_s(\bm x^*)$ and $\bm x$ is not in $\B(\bm x^*,\uabs{x^*_{[s]}}/\sqrt{2})$ but arbitrarily close to its boundary.
Without loss of generality, assume that $\uabs{x^*_1} \geq \ldots \geq \uabs{x^*_s} > \uabs{x^*_{s+1}} = \ldots = \uabs{x^*_n} = 0$.
For arbitrarily small $\epsilon>0$, define $\bm x$ as
\begin{align*}
    x_i = \begin{cases}
        x^*_s/2 &\text{if } i=s , \\
        x^*_s/2 + \epsilon &\text{if } i=s+1 , \\
        x_i &\text{otherwise.}
    \end{cases}
\end{align*}
Then, since $x_{s+1}<x_s$, $\bm x$ does not shares the same index set of $s$-largest (in magnitude) elements with $\bm x^*$. On the other hand, as $\epsilon \to 0$, we have
\begin{align*}
    \unorm{\bm x - \bm x^*} &= \sqrt{\sum_{i=1}^n (x_i - x^*_i)^2} \\ 
    &= \sqrt{\Bigl(-\frac{x^*_s}{2}\Bigr)^2 + \Bigl( \frac{x^*_s}{2}+\epsilon \Bigr)^2} \to \frac{1}{\sqrt{2}} \uabs{x^*_{[s]}} .
\end{align*}
This means $\bm x \not \in \B(\bm x^*,\uabs{x^*_{[s]}}/\sqrt{2})$ but it can approach the boundary of the ball as $\epsilon$ decreases to $0$.

\subsection{Proof of Remark~6}

In the following, we show any stationary point $\bm x^*$ of (40) is also a local minimum by proving that the objective function does not decrease if we add any perturbation to $\bm x^*$ on $\C$. 
Let us consider any perturbation $\bm \delta$ such that $\bm \delta \in \B(\bm 0, c_1(\bm x^*))$ and $\bm x = \bm x^* + \bm \delta \in \C$. Since $\bm x \in \B(\bm x^*,c_1(\bm x^*))$, using (\ref{equ:arg_sparse}), we have $\uabs{x_{[1]}} \geq \ldots \uabs{x_{[s]}} > 0$.
On the other hand, since $\bm x$ has no more than $s$ non-zero entries, it must hold that $\uabs{x_{[s+1]}} = \ldots = \uabs{x_{[n]}} = 0$. 
Therefore, $\bm x = \bm S_{\bm x^*} \bm S_{\bm x^*}^{\topnew} \bm x$, which implies $\bm \delta = \bm S_{\bm x^*} \bm S_{\bm x^*}^{\topnew} \bm \delta$.
Now we represent the change in the objective function as
\begin{align*}
    &\frac{1}{2} \unorm{\bm A (\bm x^* + \bm \delta) - \bm b}^2 - \frac{1}{2} \unorm{\bm A \bm x^* - \bm b}^2 \\
    &= \frac{1}{2} \bm \delta^\topnew \bm A^\topnew \bm A \bm \delta + \bm \delta^\topnew \bm A^\topnew ( \bm A \bm x^* - \bm b ) \\
    &= \frac{1}{2} \bm \delta^\topnew \bm S_{\bm x^*} \bm S_{\bm x^*}^{\topnew} \bm A^\topnew \bm A \bm S_{\bm x^*} \bm S_{\bm x^*}^{\topnew} \bm \delta + \bm \delta^\topnew \bm S_{\bm x^*} \bm S_{\bm x^*}^{\topnew} \bm A^\topnew ( \bm A \bm x^* - \bm b ) \\
    &= \frac{1}{2} \bm \delta^\topnew \bm S_{\bm x^*} (\bm S_{\bm x^*}^{\topnew} \bm A^\topnew \bm A \bm S_{\bm x^*}) \bm S_{\bm x^*}^{\topnew} \bm \delta \geq 0 , \numberthis \label{equ:sparse_f2}
\end{align*}
where the last equality uses the stationarity condition in (43).
From (\ref{equ:sparse_f2}), we conclude $\bm x^*$ is a local minimum of (40).

\end{document}